\documentclass[11pt,amsfonts, epsfig,reqno]{amsart}
\usepackage{amsmath, amscd, amssymb}
\usepackage{marginnote}
\usepackage[margin=1.29in]{geometry} 

\usepackage{graphpap, color}
\usepackage{mathrsfs}
\usepackage{pstricks}
\usepackage{color}
\usepackage{cancel}
\usepackage[mathscr]{eucal}

\usepackage{color}
\usepackage{cancel}
\usepackage{verbatim}
\usepackage{latexsym}
\usepackage[all]{xy}
\usepackage{graphicx}   
\usepackage{tikz}
\usepackage{caption}
\usepackage[colorlinks=true,linkcolor=blue,citecolor=blue,urlcolor=blue]{hyperref}

\theoremstyle{definition}

\def\locg{\Theta}

\def\sL{\mathscr L} \def\sN{\mathscr N}

\def\tt{\tau}
\def\pr{{\text{pr}}}

\usepackage{upgreek}
\usepackage{wasysym}
\usepackage{ esint }
\def\gndzo{G_{g,n,d}^{[0,1]}}

\numberwithin{equation}{section}
\usepackage[T1]{fontenc}
\usetikzlibrary{calc,decorations.pathreplacing,shadows.blur}

\providecommand{\MSPNeckLabel}{\mathbb P(1,5)}

\definecolor{MSPink}{HTML}{282638}
\definecolor{MSPindigo}{HTML}{2D2988}
\definecolor{MSPlilac}{HTML}{ECE7FC}
\definecolor{MSProse}{HTML}{FFE8EB}
\definecolor{MSPpurple}{HTML}{8B7FC2}
\definecolor{MSPline}{HTML}{5E5877}
\definecolor{MSPorange}{HTML}{F58A09}
\definecolor{MSPochre}{HTML}{C56508}
\definecolor{MSPblue}{HTML}{CDD9E9}

\tikzset{
  msp outline/.style={draw=MSPline,line width=.65pt},
  msp mesh/.style={draw=MSPpurple!48,line width=.32pt},
  msp guide/.style={draw=MSPline!48,line width=.55pt,dash pattern=on .5pt off 3.3pt,line cap=round},
  msp label/.style={text=MSPink,font=\large,inner sep=2pt},
  msp small/.style={text=MSPink,font=\normalsize,inner sep=2pt},
  msp orange/.style={draw=MSPorange,line width=1.65pt,line cap=round,line join=round}
}

\newcommand{\MSPGeometry}{%
  \draw[msp guide] (-4.62,2.35)--(-2.24,2.35);
  \draw[msp guide] (-4.62,.60)--(-.18,.60);
  \draw[msp guide] (-4.62,-1.65)--(-2.27,-1.65);
  \node[msp label] at (-4.91,2.35) {$\infty$};
  \node[msp label] at (-4.91,.60) {$1$};
  \node[msp label] at (-4.91,-1.65) {$0$};
  \draw[decorate,decoration={brace,amplitude=7pt},draw=MSPline,line width=.85pt]
    (-5.38,.60)--(-5.38,2.35);
  \draw[decorate,decoration={brace,amplitude=7pt},draw=MSPline,line width=.85pt]
    (-5.38,-1.65)--(-5.38,.60);
  \node[msp small,align=center] at (-6.33,1.49) {$[1,\infty]$\\[-1pt]theory};
  \node[msp small,align=center] at (-6.33,-.55) {$[0,1]$\\[-1pt]theory};

  \foreach \xx/\yy/\dx in {-2.02/-1.74/-.36,-1.54/-1.98/-.27,-.93/-2.13/-.17,
                            -.33/-2.20/-.06,.34/-2.20/.06,.94/-2.13/.17,
                            1.55/-1.98/.27,2.02/-1.74/.36}{
    \draw[MSPline!68,line width=.60pt]
      (\xx,\yy) .. controls ({\xx+.35*\dx},{\yy-.18})
      and ({\xx+.78*\dx},{\yy-.40}) .. ({\xx+\dx},{\yy-.57});
  }
  \path[shade,left color=MSPblue!72,right color=MSPlilac!55,shading angle=12]
    (-2.25,-1.37)
      .. controls (-1.73,-1.01) and (1.73,-1.01) .. (2.25,-1.37)
      .. controls (2.08,-1.86) and (1.27,-2.17) .. (0,-2.20)
      .. controls (-1.27,-2.17) and (-2.08,-1.86) .. (-2.25,-1.37);
  \draw[MSPline!58,densely dashed,line width=.48pt]
    (-2.25,-1.37) .. controls (-1.73,-1.01) and (1.73,-1.01) .. (2.25,-1.37);
  \draw[msp outline] (-2.54,-1.69)--(-2.25,-1.37)
    .. controls (-2.08,-1.86) and (-1.27,-2.17) .. (0,-2.20)
    .. controls (1.27,-2.17) and (2.08,-1.86) .. (2.25,-1.37)--(2.55,-1.65);
  \draw[MSPline!30,line width=.4pt] (-1.84,-1.56)
    .. controls (-.85,-1.94) and (.85,-1.94) .. (1.84,-1.56);
  \path[fill=MSPline!11] (0,-1.76) ellipse[x radius=1.14,y radius=.28];

  \draw[MSPorange!65,densely dashed,line width=.65pt]
    (1.02,-1.65) arc[start angle=0,end angle=180,x radius=1.02,y radius=.28];
  \path[shade,left color=MSPochre!42,middle color=orange!9,
        right color=MSPorange!66,shading angle=0]
    (0,.60)--(-1.02,-1.65)
    .. controls (-.74,-2.01) and (.74,-2.01) .. (1.02,-1.65)--cycle;
  \foreach \ang in {205,230,255,280,305,330,350}{
    \draw[MSPochre!27,line width=.35pt]
      (0,.60)--({1.02*cos(\ang)},{-1.65+.28*sin(\ang)});
  }
  \foreach \h in {.28,.50,.72}{
    \draw[MSPorange!28,line width=.35pt]
      ({1.02*\h},{.60-2.25*\h})
      arc[start angle=0,end angle=-180,x radius={1.02*\h},y radius={.28*\h}];
  }
  \draw[msp orange] (-1.02,-1.65)--(0,.60)--(1.02,-1.65);
  \path[fill=white!84!MSPorange] (0,-1.65) ellipse[x radius=1.02,y radius=.28];
  \draw[MSPorange!70,line width=.9pt]
    (1.02,-1.65) arc[start angle=0,end angle=180,x radius=1.02,y radius=.28];
  \draw[MSPochre,line width=2.2pt]
    (-1.02,-1.65) arc[start angle=180,end angle=360,x radius=1.02,y radius=.28];
  \draw[orange!60!yellow,line width=.65pt]
    (-.99,-1.66) arc[start angle=180,end angle=360,x radius=.99,y radius=.25];

  \begin{scope}[shift={(0,2.35)},yscale=.24]
  \begin{scope}[shift={(0,-2.35)}]
  \path[shade,left color=MSPpurple!68,middle color=white,
        right color=MSPindigo!38,shading angle=0]
    (-2.08,2.35)
      .. controls (-1.06,2.23) and (-.22,1.78) .. (-.095,1.29)
      .. controls (-.055,1.21) and (.055,1.21) .. (.095,1.29)
      .. controls (.22,1.78) and (1.06,2.23) .. (2.08,2.35)
      arc[start angle=0,end angle=180,x radius=2.08,y radius=.43];
  \path[shade, top color=MSPlilac!42,bottom color=MSPpurple!35]
    (0,2.35) ellipse[x radius=2.08,y radius=.43];
  \draw[MSPline!58,densely dashed,line width=.55pt]
    (2.08,2.35) arc[start angle=0,end angle=180,x radius=2.08,y radius=.43];
  \foreach \rr/\yy in {.45/1.72,.92/2.01,1.48/2.22}{
    \draw[msp mesh] (-\rr,\yy)
      arc[start angle=180,end angle=360,x radius=\rr,y radius={.205*\rr}];
  }
  \foreach \ang in {202,226,250,274,298,322,344}{
    \draw[msp mesh]
      (.07,1.26) .. controls ({.32*cos(\ang)},{1.78+.08*sin(\ang)})
      and ({1.20*cos(\ang)},{2.14+.25*sin(\ang)})
      .. ({2.08*cos(\ang)},{2.35+.43*sin(\ang)});
  }
  \draw[msp outline] (-2.08,2.35)
    .. controls (-1.06,2.23) and (-.22,1.78) .. (-.095,1.29);
  \draw[msp outline] (2.08,2.35)
    .. controls (1.06,2.23) and (.22,1.78) .. (.095,1.29);
  \draw[msp outline]
    (-2.08,2.35) arc[start angle=180,end angle=360,x radius=2.08,y radius=.43];
  \draw[white,opacity=.66,line width=.9pt]
    (-1.92,2.38) arc[start angle=180,end angle=255,x radius=1.92,y radius=.39];

  \end{scope}
  \end{scope}

  \draw[MSPochre,line width=3.9pt,line cap=round] (0,2.0932)--(0,.60);
  \draw[MSPorange,line width=2.9pt,line cap=round] (-.012,2.0932)--(-.012,.60);
  \draw[yellow!55!white,line width=.65pt,line cap=round] (-.030,2.0532)--(-.030,.73);
  \shade[ball color=black] (0,.60) circle[radius=.095];
\node[msp small,anchor=south east] at (-.12,.68) {$O$};
  \node[msp label] at (0,3.13) {$[\mathbb C^5/\mathbb Z_5]$};
  \node[msp small,anchor=west] at (2.63,2.35) {$\nu=0$};
  \node[msp small,anchor=west] at (2.63,-1.65) {$\mu=0$};
  \node[msp small,anchor=east] at (-1.24,-1.57) {$\mathbb P^4$};
  \node[msp small,anchor=west] at (.77,-.56) {$C_Q$};
  \node[msp small] at (0,-1.65) {$Q$};
  \node[msp label] at (0,-2.85) {$K_{\mathbb P^4}$};
  \node[msp small,anchor=west] at (.37,.29) {$x=p=0$};
  \node[msp small,rotate=90] at (.38,1.25) {$\MSPNeckLabel$};

  \node[msp label,anchor=west] at (3.90,2.35) {FJRW theory};
  \node[msp label,anchor=west] at (6.18,2.35) {$\quad \Omega^{\mathrm{LG}}$};
  \node[msp small,anchor=west,align=left] at (3.90,.60) {Cone-vertex \\[-2pt]  theory};
  \node[msp label,anchor=west] at (6.18,.60) {$\quad T\Omega^{\mathrm{pt}}$};
  \node[msp label,anchor=west] at (3.90,-1.65) {GW theory};
  \node[msp label,anchor=west] at (6.18,-1.65) {$\quad \Omega^X$};
}

\newcommand{\id}{\mathrm{I}}

\def\diag{{\mathrm {diag}}}

\def\virt{^{\vir}}

\newcommand{\tw}{\mathrm{tw}}

\def\lsta{_{\ast}}

\newcommand{\CC}{\mathbb{C}}

\newcommand{\PP}{\mathbb{P}}
\newcommand{\QQ}{\mathbb{Q}}

\def\node{{\mathrm{node}}}

\newcommand{\vir}{ {\mathrm{vir}} }
\newcommand{\ev}{ \mathrm{ev} }

\newcommand{\Mbar}{\overline{\mathcal{M}}}

\newcommand{\cal}{\mathcal}

\def\cW{{\cal W}}

\def\ft{\mathfrak{t}}

 \def\sC{{\mathscr C}}

\def\sta{^\ast}

\newtheorem{dummy}{}[section]
\newtheorem{lemma}[dummy]{Lemma}
\newtheorem{proposition}[dummy]{Proposition}
\newtheorem{theorem}{Theorem}

\newtheorem{corollary}[dummy]{Corollary}
\newtheorem{conjecture}[dummy]{Conjecture}

\newtheorem{propositiondefinition}[dummy]{Proposition--Definition}

\theoremstyle{definition}

\newtheorem{definition}[dummy]{Definition}
\newtheorem{example}[dummy]{Example}

\newtheorem{remark}[dummy]{Remark}

\def\bd{\bold d}

\def\loc{{\mathrm{loc}}}
\def\msp{^{\mathrm{msp}}}
\def\pt{\mathrm{pt}}

\def\beq{\begin{equation}}
\def\eeq{\end{equation}}

\def\Pf{{\PP^4}}

\def\barM{{\overline{\mathcal M}}}

\def\sub{\subset}

\def\sH{{\mathscr H}}
 
\newcommand{\ord}{\text{ord}}

\newcommand{\M}{\overline{\mathcal{M}}}

\renewcommand{\ev}{\mathrm{ev}}

\newcommand{\reg}{^{\text{reg}}}
\newcommand{\cR}{\mathcal{R}}

\def\lra{\longrightarrow}
\def\and{\quad\mathrm{and}\quad}
\def\bl{\bigl(} \def\br{\bigr)}

\title[Conifold gap for the quintic threefold]{Conifold gap for the quintic threefold}

\author{Huai-Liang Chang}
\address{Institute for Math and AI \& School of Mathematics and Statistics, Wuhan University
}
\email{hlchang@whu.edu.cn}

\author[Shuai Guo]{Shuai Guo}
\address{School of Mathematical Sciences, Peking University
}
\email{guoshuai@math.pku.edu.cn}

\author{Lei You}
\address{School of Mathematical Sciences, Peking University
}
\email{youlei26@stu.math.pku.edu.cn}

\author{Huaigong Zhang}
\address{Great Bay University, Great Bay Institute for Advanced Study,
}
\email{zhanghuaigong@outlook.com}

\begin{document}

\begin{abstract}


We prove the all-genus conifold gap conjecture for the
Gromov--Witten (GW) theories of the quintic threefold,  other
smooth Fermat Calabi--Yau threefold hypersurfaces,  local
$\mathbb P^2$, and for the FJRW theories associated with these
hypersurfaces.

Our proof uses Mixed Spin P-field (MSP) theory and analogous
master-space constructions. The central object is the
\emph{cone-vertex theory}. 
We express this theory as a translation of the point CohFT via a formal Laplace transform and prove a universal conifold gap theorem for the resulting translated theories.
The master-space construction then transfers the gap from
the cone-vertex theory to the GW and FJRW theories.


\end{abstract}

\setcounter{section}{0}
\setcounter{tocdepth}{1}

\maketitle
\tableofcontents

 \section{Introduction}

 The conifold gap conjecture, formulated in \cite{HKQ} via type IIB string theory, predicts a universal leading pole and vanishing subleading poles in the genus-$g$ GW potential of a Calabi–Yau threefold,   when the potential is expanded in a flat coordinate near the conifold point. 
 The term ``conifold point'' here refers to  the point of the complex moduli space at
which the corresponding mirror Calabi Yau fiber acquires an ordinary double
point (\textit{inside} the Calabi Yau fiber). The conjecture  supplies boundary conditions for determining the holomorphic ambiguity in the BCOV holomorphic anomaly equations \cite{HKQ} (see also \cite{You} for a survey). For the quintic threefold, the  approach of Guo–Janda–Ruan gives a computational verification for $g\le5$, via the formal quintic and the log GLSM localization formula under development  \cite{GJR2} (see also Lei’s lecture notes \cite{Lei}).  
 For local $\mathbb P^2$,  Brini proved the conjecture in all genera \cite{Brini}. In \cite{FCCZ}, Fang-Chen-Chen-Zong proved a conifold gap theorem for topological recursion. 
 For compact Calabi Yau threefolds,  the gap conjecture has  remained out of reach in enumerative geometry. One source of difficulty is that the available curve-counting  moduli spaces do not reveal or preserve the gap structure under localization, degeneration or other  available packaging procedures. \\

 We prove, in all genera, the gap conjecture for the quintic threefold and other smooth Fermat-type Calabi–Yau hypersurfaces.
 The proof uses the same MSP framework as our earlier work on the BCOV Feynman rule for the quintic. That work led us to develop NMSP theory \cite{NMSP1,NMSP2,NMSP3}, where, for large $N$, the BCOV propagators arise from counting NMSP rational curves. Here, however, we return to the original $N=1$ MSP theory and identify a gap phenomenon from its counts of higher genus curves mapped to a neighborhood of the cone vertex lying outside the quintic threefold.

The main part of MSP theory (called $[0,1]$ part) can be considered as counting curves mapped to a projective cone over the quintic: $$C_Q=\{[x_1,\cdots,x_5,u], x_1^5+\cdots x_5^5=0\}\sub\PP^5 $$  obtained by contracting the quintic threefold to a point $O=[0,0,0,0,0,1]\in \PP^5$. 
  
   
   The $\CC\sta$-action that scales the last coordinate of the cone    
 has two fixed loci. The first locus, labeled  level zero,   is  the quintic threefold    $$Q=\{[x_1,\cdots,x_5], x_1^5+\cdots x_5^5=0\} \sub\Pf=(u=0).$$ 
The second locus, labeled level one, is the point $O\notin Q$.  

 \begin{center}
\begin{tikzpicture}[x=0.75cm,y=0.75cm,draw=black,text=black,
  line width=0.4pt,font=\small]
  \fill[orange!80] (0,1.55) -- (-0.8,-0.1)
    .. controls (-0.35,-0.2) and (0.35,-0.2) .. (0.8,-0.1) -- cycle;
  \draw (-2,0.5) .. controls (-1.8,-0.4) and (1.8,-0.4) .. (2,0.5);
  \fill (0,1.55) circle[radius=1.6pt]
    node[above left,inner sep=2pt] {$O$};
  \node at (0,0.62) {$C_Q$};
  \node[below] at (0,-0.17) {$Q$};
  \node[above] at (-1.9,0.5) {$\mathbb P^4$};
  \node at (3.1,0.65) {$\subset$};
  \draw (4.2,0.5) .. controls (4.4,-0.4) and (8,-0.4) .. (8.2,0.5);
  \fill (6.2,1.55) circle[radius=1.6pt]
    node[above left,inner sep=2pt] {$O$};
  \node at (6.9,0.8) {$\mathbb P^5$};
  \node[below right] at (8.05,0.25) {$\mathbb P^4$};
\end{tikzpicture}
\end{center}

 The vertex $O$ is a  conical singularity of $C_Q$. As the  $[0,1]$-MSP theory counts  curves mapped to $C_Q$, torus localization reduces it to its level-zero ($Q$'s GW) and  level-one theory which counts curves mapped to a neighborhood of the cone vertex $O$ in $C_Q$. We call  the later ``cone vertex theory". 
 
 This cone singularity lies outside the quintic and appears, at first sight, to be unrelated to the conifold singularity occurring inside the (B side) conifold CY threefold in gap conjecture. However,  in \cite{MSPg1}, the authors suggested that these two singularity pictures are related. More precisely,  they showed that the principal part predicted by genus one gap 
  conjecture  appears  in the  formula counting genus-one   curves near the cone vertex $O$ in MSP theory. 

  We discover that, for arbitrary genus, the cone-vertex theory 
  exhibits  the desired gap phenomenon. MSP theory then transfers this gap property to the level-zero sector---the quintic GW theory.
   What makes such a transfer possible is a remarkable cancellation of the gap polar parts in the sum of all non-leading graph contributions. This dramatic cancellation may hint at deeper underlying geometric mechanisms. The same argument proves the gap conjecture for local $\PP^2$ and for other one-parameter models, including the FJRW theories of Fermat singularities, by exploiting the analogous gap phenomenon in their level-one (cone vertex) sectors. We expect the same method to apply to other theories of Calabi–Yau type.  

\subsection{Conifold gap conjecture for one-parameter Calabi-Yau family} \label{gapconj}
Let $X$ be a smooth projective Calabi–Yau threefold with \(h^{1,1}(X)=1\). Suppose that $X$ admits a one-parameter mirror family \(\mathcal{X}^{\vee} \to \mathcal{M}\). Here, \(\mathcal{M}\) is a rational curve parametrized by a global coordinate $q$. Furthermore, the compactification of \(\mathcal{M}\) contains two special points: a maximally unipotent monodromy (MUM) point at $q=0$, corresponding to the large complex structure limit, and a conifold point, where the corresponding fiber acquires ordinary double points.

Put $D=q\partial_q$ and $K=\mathbb Q(q)$. Let $I_0$ and $I_1$ be the holomorphic and logarithmic periods near $q=0$, normalized by $I_0=1+O(q)$ and $I_1=I_0\log q+O(q)$; {they satisfy} a common homogeneous linear Picard–Fuchs equation over $K$. Set $\tau=I_1/I_0$, the flat coordinate.

\begin{definition}
Let $\mathscr R$ be the differential $K$-algebra generated by $I_0^{\pm1}$ and $(D\tau)^{\pm1}$.
\end{definition}
The Picard–Fuchs equation  implies that $\mathscr R$ is finitely generated as a $K$-algebra.

\medskip

Choose a local parameter $\Delta\in K$ at the conifold point and expand elements of $K$ in $\mathbb C((\Delta))$. Then $D=(D\Delta)\partial_\Delta$.
Let $\omega_0$ and $\omega_1$ be holomorphic periods such that $\omega_0(0)\neq 0$, $\omega_1$ vanishes simply at $\Delta=0$, and the logarithmic period has the form $\omega_1\log\Delta+O(\Delta^0)$.
Define the conifold mirror map by $\tau_c=\omega_1/\omega_0$.

We call a pair $(\omega_0,\omega_1)$ \emph{admissible} if it satisfies the above period conditions and if the substitutions $I_0\mapsto\omega_0$ and $D\tau\mapsto D\tau_c$, extended to derivatives and inverses, preserve the relations in $\mathscr R$. They then define a differential homomorphism:
\[
\iota:\mathscr R\longrightarrow
\mathscr R^{\mathrm{coni}}\subseteq\mathbb C((\Delta)),
\]
where $\mathscr R^{\mathrm{coni}}$ is the ring generated by $\omega_0^{\pm1}$ and $(D\tau_c)^{\pm1}$ and their derivatives.

\begin{definition}
For $f\in\mathscr R$,  we call the Laurent series $f_c:=\iota(f)$  its \emph{conifold expansion}, given admissible periods $\omega_0,\omega_1$,  and call $f \in \mathscr R$ \emph{conifold regular} if $\iota(f)\in\mathbb C[[\Delta]]$.
A matrix or vector is said to be conifold regular if all of its entries  lie in   $\mathscr R$ and are conifold regular.
\end{definition}



\begin{example}
For the quintic, take $\Delta=1-5^5q$. The Picard–Fuchs equation becomes
\[
\Big(
D^4+(\Delta-1)\prod_{j=1}^4
\big(D+\frac j5\big)
\Big)\omega=0,
\qquad
D=(\Delta-1)\partial_\Delta.\]
 At the MUM point $q=0$, normalize $I_0=1+O(q)$ and $I_1=I_0\log q+O(q)$. Put
$I_{11}=D\tau$,
$A_1=\frac{DI_{11}}{I_{11}}$ and
$B_k=\frac{D^kI_0}{I_0}$ for $k=1,2,3$. The differential ring is given by
\[
\mathscr R
=
K[I_0^{\pm1},I_{11}^{\pm1},A_1,B_1,B_2,B_3].
\]
 At the conifold point  $q=5^{-5}$, an admissible   pair of period can be chosen as
\[
\omega_0=1+O(\Delta^3),
\qquad
\omega_1=\Delta+\frac7{10}\Delta^2+O(\Delta^3).
\]
These conditions determine both solutions in $\mathbb Q[[\Delta]]$.
Thus the prescribed substitution defines the differential homomorphism $\iota$, with image in $\mathbb Q((\Delta))$. Set\[
\tau_c=\frac{\omega_1}{\omega_0}
=\Delta+\frac{7}{10}\Delta^2+O(\Delta^3).
\]
\end{example}



Let $F_g(Q)$ be the genus-$g$ Gromov–Witten potential of the quintic. The finite generation theorem proved in \cite{NMSP2} states that, for $g\ge2$ 
$$F_g(Q(q)) \in \mathscr R ,  \qquad  \text{where} \quad Q(q)=\exp(I_1/I_0).$$
Write $F_g^c(\tau_c)$ for $\iota(F_g(Q(q)))$, expressed in the local coordinate $\tau_c$.

\begin{conjecture}[Quintic conifold gap] For any admissible pair $(\omega_0,\omega_1)$ and any $g\ge2$,
\[
F_g^c(\tau_c)
-
\frac{(-1)^{g-1}B_{2g}}
{5^{g-1}\,2g(2g-2)\,\tau_c^{2g-2}}
\in\mathbb Q[[\tau_c]],
\]
 where $B_{2g}$ is the Bernoulli number. Thus the displayed leading pole is the only negative-power term.
\end{conjecture}

\subsection{Master-space localization as an $R$-matrix action}

For the quintic Calabi–Yau threefold \(Q\), or more generally for a smooth Calabi–Yau complete intersection threefold \(X\) in a weighted projective space (or a product thereof), the master-space technique provides an efficient framework for computing its GW invariants. 

The idea is to construct a larger theory that behaves like the GW theory of a Fano variety, with a torus action whose fixed-locus contributions include those associated with $X$ and a point. Indeed, the GW theory of $C_Q$ discussed in the  introduction can be defined through the MSP construction; it is exactly the $[0,1]$ sector of quintic MSP theory. Localization then yields relations among these invariants. MSP theory provides a concrete realization of this idea.



The even\footnote{For simplicity, we suppress the odd part of the state space in the notation throughout the rest of the paper, since the $R$-matrix acts trivially on it. Whenever we refer to a CohFT, however, we retain the full state space, including the odd part required for the gluing axioms.}  state space  $\sH^X=H^{\mathrm{even}}(X)$ admits an ordered homogeneous basis
\[
\{ \varphi_0=\mathbf1_X,\ \varphi_1,\ \varphi_2,\  \varphi_3 \} ,
\qquad \deg\varphi_i=i.
\]
 Here $\deg$ denotes complex cohomological degree.  

Suppose that we have a master-space construction as above for which virtual localization, together with Grothendieck–Riemann–Roch, yields the CohFT relation \footnote{We refer to \cite{PPZ,NMSP3} for the definitions of CohFTs, the $R$-and $T$-action.}
\begin{equation}\label{eq:master-cohft-R}
 \Omega^M=R T_R(\Omega^{X,\tau}\oplus\Omega^\pt)
\end{equation}
where $\Omega^M$ is the CohFT associated with the master theory, $R$ is the symplectic $R$-matrix obtained from this construction and $T_R(z)
=
z\left(
\mathbf1_X+\mathbf1_{\mathrm{pt}}
-
R(z)^{-1}\mathbf1_M
\right)$ is the translation.

Using the comparison of fundamental solutions supplied by the construction, the QDEs for the master theory and the fixed-locus theories give the following relative QDE,
\begin{equation}\label{eq:two-QDE}
-zD R(z)^{-1}+A^{\mathrm{loc}}R(z)^{-1}=R(z)^{-1}A^M,
\qquad A^{\mathrm{loc}}=A^X\oplus A^{\mathrm{pt}},
\end{equation}
where $A^M$, $A^X$, and $A^{\mathrm{pt}}$ are the corresponding quantum product matrices.

{
 \begin{definition} \label{def:globallocal}
 Fix the ring $\mathscr R$ and a conifold expansion homomorphism $\iota$.
The relative QDE \eqref{eq:two-QDE}, together with the symplectic $R$-matrix relating them, whose entries take values in $\mathscr R$, are said to form a \emph{regular local-global QDE system} if the following conditions hold:
\begin{enumerate}
\item All entries of $A^M$ and $A^{\mathrm{pt}}$ take values   in the ring $\mathscr R$ and are conifold regular.
\item   
The inverse of the matrix with columns $(A^M)^j\mathbf1_M$, $0\leq j \leq 4$, is conifold regular. 
The entry $\eta^X(\varphi^1,A^X\varphi_0)$ and its inverse are conifold regular.\footnote{For the quintic, in the basis used below, this coefficient is $I_{11}$, with conifold expansion $D\tau_c=-1+O(\Delta)$.}
\item The $\varphi_0$- and $\varphi_1$-components of $R(z)^{-1}\mathbf1_M$ are conifold regular, with the former equal to $I_0$ at $z=0$. 
\end{enumerate}
\end{definition}
}

We will show that the $R$-matrix arising from localization in an MSP-type theory satisfies all the properties listed above.

\subsection{Abstract conifold gap theorem}

\begin{definition}\label{def:conifold-reg-gap}
Let \(\mathscr H\) be a fixed finite-dimensional vector space, and let
\(\Omega\) be a CohFT with coefficients in \(\mathscr R\) and state space \(\mathscr H\). 

\begin{enumerate}
\item
The CohFT \(\Omega\) is \emph{conifold regular} if
for \(2g-2+n>0\), \(v_i\in \mathscr H\), and \(a_i\geq 0\), the correlator is conifold regular:
\[
  \iota\bigg(
  \int_{\overline{\mathcal M}_{g,n}}
  \Omega_{g,n}(v_1,\ldots,v_n)
  \prod_{i=1}^n\psi_i^{a_i}\bigg)
  \in\mathbb{C}[[\Delta]].
\]

\item
Fix a formal series
\(\omega=\Delta+O(\Delta^2)\in \mathbb{C}[[\Delta]]\).
The CohFT \(\Omega\) has the \emph{conifold gap property}
with respect to \(\omega\) if all correlators with \(n\geq1\)
are regular and there exists a constant $\kappa\in\mathbb C^*$ such that, for every \(g\geq2\)
\[
  \iota\bigg(
  \int_{\overline{\mathcal M}_{g}}
  \Omega_{g,0}
  \bigg)
  =
  \kappa^{1-g}\frac{(-1)^gB_{2g}}{2g(2g-2)}\,\omega^{2-2g}+O(\Delta^0) .
\]
\end{enumerate}
\end{definition}

 \begin{theorem} \label{thm:main}
Suppose the following three conditions hold.
\begin{itemize}
\item[(i)] There is a conifold-regular theory $\Omega^M$ over $\mathscr R$, with
state space $\sH^M$, pairing $\eta^M$ and unit $\mathbf 1_M$, and a symplectic
$R(z):\sH^X\oplus \mathbb C \mathbf1_{\mathrm{pt}}\to \sH^M$  such that
\begin{equation}\label{eq:master-decomp}
 \Omega^M=RT_R(\Omega^{X,\tau}\oplus\Omega^\pt).
 \end{equation}
 \item[(ii)]  The above $R$-matrix is determined by a \emph{regular local-global QDE system}.
   \item[(iii)] The translated point theory $T\Omega^{\pt}$, where $T=T_R|_{\mathrm{pt}}$, has  the \emph{conifold gap property} with respect to $\omega_1$.
\end{itemize}
Then the 
$F_g^{X,c}$ below has the following \emph{conifold gap} property:
$$
F_g^{X,c} := \iota\Big( \int_{\overline{\mathcal M}_g} \Omega_g^{X,\tau} \Big)
= \kappa^{1-g}\frac{(-1)^{g-1} B_{2g}}{2g(2g-2)\,\tau_c^{2g-2}}+O(\tau_c^0)
$$
where $\tau_c = \omega_1/\omega_0 \in\Delta\mathbb C[[\Delta]]$ is the conifold mirror map.
\end{theorem}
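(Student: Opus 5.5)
We propose to extract the genus-$g$ vacuum of $\Omega^{X,\tau}$ from the decomposition \eqref{eq:master-decomp} by Givental--Teleman graph sums, and to compare polar parts on both sides. Write $\Omega^M = R\,T_R(\Omega^{X,\tau}\oplus\Omega^{\pt})$ as a sum over stable graphs $\Gamma$ of genus $g$ with no legs. Each vertex carries either a translated $X$-theory or the translated point theory $T\Omega^{\pt}$; edges carry the propagator $\frac{\eta^{-1}-R^{-1}(\psi)\eta^{-1}R^{-1}(\psi')^{t}}{\psi+\psi'}$; and dilaton/translation legs carry $T_R$. The genus-$g$ vacuum of $\Omega^M$ is conifold regular by (i). There are two single-vertex graphs: the $X$-vertex gives $\int\Omega^{X,T}_g$, and the $\pt$-vertex gives $\int (T\Omega^{\pt})_{g,0}$. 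The latter has leading pole $\kappa^{1-g}\frac{(-1)^gB_{2g}}{2g(2g-2)}\omega_1^{2-2g}$ by (iii). So we obtain
\[
\iota\Big(\int\Omega_g^{X,T_R|_X}\Big) = -\kappa^{1-g}\frac{(-1)^gB_{2g}}{2g(2g-2)}\omega_1^{2-2g} - \sum_{\Gamma\ \text{nontrivial}}\iota(\mathrm{Cont}_\Gamma) + O(\Delta^0).
\]

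First, we handle the translation on the $X$-vertex. The $\varphi_0$-component of $R^{-1}\mathbf 1_M$ at $z=0$ is $I_0$, and the $\varphi_1$-component is regular by (iii) of Definition \ref{def:globallocal}. The dilaton equation together with the divisor equation (via the flat coordinate $\tau$ in $\Omega^{X,\tau}$) converts $\int\Omega_g^{X,T}$ into $(I_0)^{2g-2}F_g^X$ plus graphs in lower genus, with regular coefficients. Under $\iota$, $I_0\mapsto\omega_0$, so $\omega_1^{2-2g}\omega_0^{2g-2}=\tau_c^{2-2g}$, which produces the claimed leading term with its sign flip. The higher-$z$ parts of $T_R$ give $\psi$-insertions, which the string and dilaton equations absorb into regular corrections.

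Second, we must show that all nontrivial graphs have vanishing polar part. We argue by induction on $g$. Every vertex in a nontrivial graph has either $n\ge1$ legs or half-edges, or genus less than $g$.

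$\pt$-vertices with $n\ge1$ are regular by (iii).

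$X$-vertices are handled through the relative QDE \eqref{eq:two-QDE}. We solve for $R^{-1}$ recursively in $z$ as $R^{-1}_{k+1}$ in terms of $D R^{-1}_k$ and $A^M$. Condition (ii), through the invertibility of the cyclic matrix built from $(A^M)^j\mathbf 1_M$ and of $I_{11}$, shows that $R^{\pm1}$ and hence the edge and leg weights are conifold regular. For the $X$-vertices with $n\ge1$ we use the same QDE to express $X$-correlators with insertions in terms of $D$-derivatives of lower ones times $I_{11}^{-1}$.

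The delicate point is that the $X$-correlators with insertions are themselves not regular. By induction, genus-$h<g$ vacua have poles of order $\tau_c^{2-2h}$, and taking derivatives $D=(D\Delta)\partial_\Delta$ raises the pole order. So regularity of individual graphs fails, and what is needed is the cancellation among non-leading graphs advertised in the introduction.

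\textbf{Main obstacle.} Our plan for this cancellation is to reorganize the sum over nontrivial graphs. We would apply the inverse $R$-action to the point part only, so as to rewrite the nontrivial contributions as the vacuum of a CohFT obtained from $\Omega^M$ by an $R$-action, which is regular, minus the $X$-theory corrected by a regular $R$-matrix and translation. Concretely, we would use the symplectic inverse $\Omega^{X,\tau}\oplus T\Omega^{\pt} = T^{-1}R^{-1}\Omega^M$ restricted to the $X$ summand. This expresses $\Omega^{X,T}$ as a graph sum over $\Omega^M$-vertices, which are regular, with $R^{-1}$ edges, which are regular. The remaining step is to show that the point summand decouples modulo regular terms.

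We expect the hardest step to be proving that the projection onto the point idempotent contributes only through its single-vertex term up to $O(\Delta^0)$. We would attempt this via a pole-order count in $\Delta$: an edge attached to a $\pt$-vertex costs a factor of order $\omega_1$ from the $\varphi_1$-component, while the pole order of the vertex grows by at most the same amount.
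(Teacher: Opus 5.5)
There is a genuine gap, and it is where your argument carries its weight. You claim that condition (ii) makes $R^{\pm1}$, and hence all edge and leg weights, conifold regular. That is false. Definition~\ref{def:globallocal} only controls $A^M$, $A^{\mathrm{pt}}$, the cyclic matrix built from $(A^M)^j\mathbf 1_M$, the single entry $\eta^X(\varphi^1,A^X\varphi_0)$, and the $\varphi_0,\varphi_1$ components of $R^{-1}\mathbf 1_M$. The matrix $A^X$ itself is singular at the conifold point. For the quintic, $I_{22}=Y/(I_0^2I_{11}^2)$ with $Y=\Delta^{-1}$, and $\tau_2,\tau_3$ also have poles. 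Through the recursion \eqref{msp-R} these poles enter the $\varphi_2,\varphi_3$ rows of $R^{-1}$. The point row is singular as well: the point component of $R^{-1}\mathbf 1_M$ is the cone-Laplace series, which already has a double pole at $z=0$. Only the first two rows of $R^{-1}$ are regular (Lemma~\ref{lem:further-reg}).

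Note that if $R^{\pm1}$ were regular, your ``main obstacle'' would not exist. Then $R^{-1}\Omega^M=T_R(\Omega^{X,\tau}\oplus\Omega^{\mathrm{pt}})$ would be regular by Lemma~\ref{lem:reg-preserve}, and the vacuum of a direct sum is exactly $I_0^{2-2g}F_g^X+\int_{\Mbar_g}(T\Omega^{\mathrm{pt}})_{g,0}$, so nothing would need to decouple. In reality the mixed graphs, where $X$-vertices receive $\varphi_0,\varphi_1$ insertions through edges, are singular term by term. Your heuristic that an edge ``costs a factor of order $\omega_1$'' has no support in the hypotheses and would not control them.

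The missing idea is to split off a regular factor of $R$ so that the remaining propagator can never feed $\varphi_0$ or $\varphi_1$ into an $X$-vertex. The first two rows of $R^{-1}$ are regular, and the $2\times2$ block in Lemma~\ref{lem:further-reg} has regular inverse. Hence one can build a regular symplectic $R^{\mathrm{reg}}$ with $R^{\mathrm{reg}}\varphi^i=R\varphi^i$ for $i=0,1$, and set $R^{\mathrm{prin}}=(R^{\mathrm{reg}})^{-1}R$ (Proposition~\ref{prop:decompose-R}). Then $(R^{\mathrm{prin}})^{-1}$ is the identity on the $\varphi_0,\varphi_1$ coordinates, and $V_{R^{\mathrm{prin}}}$ has no $\varphi_0,\varphi_1$ components (Proposition~\ref{prop:V-prin}).

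In the genus-$g$ vacuum of $R^{\mathrm{prin}}T_R(\Omega^{X,\tau}\oplus\Omega^{\mathrm{pt}})$, every $X$-vertex incident to an edge carries only insertions of degree $\ge2$. It therefore vanishes \emph{identically} by the Calabi--Yau dimension constraint (Lemma~\ref{lem:CY3-vertex-elimination}). This is an exact vanishing, not a cancellation of poles, and no induction on genus is needed. Two types of graphs survive:
\begin{itemize}
\item the lone $X$-vertex, which gives $I_0^{2-2g}F_g^X$. The exponent is $2-2g$, not $2g-2$, and the higher $z$-terms of $T_R|_X$ vanish by dimension rather than producing lower-genus corrections.
\item the all-point graphs, which assemble into the rank-one action $r_{\mathrm{pt}}^{-1}T\Omega^{\mathrm{pt}}$.
\end{itemize}

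You still need to show that $r_{\mathrm{pt}}$ is regular. The paper does this through the scalar ODE $zDr_{\mathrm{pt}}=(A^{\mathrm{pt}}-[A^{\mathrm{reg}}]_{\mathrm{pt},\mathrm{pt}})r_{\mathrm{pt}}$ with $r_{\mathrm{pt}}(0)=1$. Then $r_{\mathrm{pt}}^{-1}$ preserves the gap, $\int_{\Mbar_g}(R^{\mathrm{reg}})^{-1}\Omega^M$ is regular by (i), and identity \eqref{eq:final} gives the theorem.
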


 \begin{theorem} \label{main2} Let $X=Q$ be the quintic threefold, and let $\Omega^M=\Omega^{[0,1]}$ be the CohFT of the ordinary $N=1$ MSP $[0,1]$-theory. With the conifold periods normalized as in Section~\ref{gapconj}, the associated MSP construction satisfies Conditions (i)–(iii) of Theorem \ref{thm:main}. \\ Consequently, the conifold gap property holds for the quintic threefold.
\end{theorem}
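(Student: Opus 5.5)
We verify Conditions (i)–(iii) of Theorem~\ref{thm:main} one at a time, using the structure of $N=1$ MSP theory for the quintic.

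\textbf{Condition (i).} For the $[0,1]$-sector we apply $\CC^\ast$-virtual localization to the MSP moduli of fields with $\nu=0$. Torus-fixed loci decompose into level-zero vertices (stable maps to $Q$ with $p$-fields, hence quintic GW theory twisted by $\tau$), level-one vertices (curves mapped to the cone vertex $O$, contributing Hodge-type integrals that assemble into a point theory), and edges. These are exactly the regions pictured in the figure. A Givental--Teleman style analysis packages the edge and leg contributions into an $R$-matrix and a translation. The GRR computation of the Hodge classes on the level-one vertices (as in \cite{NMSP3}) yields a CohFT $R$-action on $\Omega^{\pt}$. Together these give \eqref{eq:master-decomp}.

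Symplecticity of $R$ follows from the standard argument: compare with the genus-zero fundamental solutions, i.e.\ the $J$- and $S$-functions of the master and local theories. Conifold regularity of $\Omega^M$ requires its own argument. The $[0,1]$-theory is a polynomial-type (Fano-like) theory: its invariants lie in $K$-algebras generated by $q$, and by the MSP finiteness/polynomiality results its correlators are rational in $q$, with denominators only at $q=0$ and $\infty$. They are therefore regular at $q=5^{-5}$.

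\textbf{Condition (ii).} We write $A^M$ explicitly as the quantum multiplication by the hyperplane class on the $5$-dimensional state space generated by $\mathbf 1_M$. This is a companion-type matrix with entries in $K$ and no pole at $\Delta=0$. The Vandermonde-type matrix with columns $(A^M)^j\mathbf1_M$ is then the identity in a suitable basis, so its inverse is trivially regular.

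For the point theory, $A^{\pt}$ is the scalar eigenvalue of $A^M$ branching off the cone point, namely a fifth root $\propto q^{1/5}$ up to normalization. It must be checked to lie in $\mathscr R$ after the base change used in the paper. The quintic quantum product $A^X$ in the flat basis has $\eta^X(\varphi^1,A^X\varphi_0)=I_{11}$, whose expansion $D\tau_c=-1+O(\Delta)$ is a unit.

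The relative QDE \eqref{eq:two-QDE} then determines $R^{-1}$ recursively in $z$. The $\varphi_0,\varphi_1$-components of $R^{-1}\mathbf1_M$ are given by the $I$-function components $I_0$ and $DI_0$-type combinations, which lie in $\mathscr R$. Their conifold regularity follows from the admissibility of $(\omega_0,\omega_1)$, since $\iota(I_0)=\omega_0$.

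\textbf{Condition (iii)} is the main obstacle. Here we must show that $T\Omega^{\pt}$ has the conifold gap with respect to $\omega_1$. We would compute the translation $T(z)=z(\mathbf1_{\pt}-R^{-1}\mathbf1_M|_{\pt})$ explicitly from the level-one edge contributions. We expect it to be a formal Laplace transform of a period-type function whose leading coefficient vanishes to first order at the conifold, proportional to $\omega_1$.

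The translated point theory is then $\sum_n \frac1{n!}\int\psi$-integrals evaluated at this translation. Its genus-$g$ free energy equals a formal Laplace transform of a Gamma-type asymptotic series. By the Stirling expansion of $\log\Gamma$, this produces exactly $\frac{B_{2g}}{2g(2g-2)}\kappa^{1-g}\omega_1^{2-2g}$ with no other polar terms. This is a universal gap theorem for translated point theories with linearly vanishing dilaton shift.

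The hardest part will be identifying $T$ precisely, including $\kappa=5$ and the sign, and proving that the non-leading terms of $T$ contribute only regular corrections. We would first try to reduce to the case of a pure dilaton shift by a change of coordinates along the translation orbit. The final step is to invoke Theorem~\ref{thm:main}, which gives the gap for $F_g^{Q,c}$.
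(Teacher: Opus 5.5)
Your outline (verify (i)--(iii), then apply Theorem~\ref{thm:main}) is the paper's. However, Condition (iii), which carries the whole content of the gap, is only a heuristic in your proposal, and the route you sketch would not work. Two ingredients are missing.

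The first is the identification of the point translation. The paper does not compute $T$ from edge contributions. It observes that the point component of the master $I$-function solves the cone-extended Picard--Fuchs equation \eqref{eq:general-extended}. By Proposition--Definition~\ref{thm:transfer}, the formal solution with normalization \eqref{eq:extended-normalization} is unique and equals $ze^{\mathsf q/z}\sum_k\cR_k[\omega_1]z^k$, and the oscillatory-integral analysis of \cite[Section~6]{NMSP2} supplies that normalization. Hence $T$ agrees with $T_{\omega_1}$ up to constant rescalings absorbed by the dilaton and dimension equations.

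The second, and more serious, is the gap for $T_\omega\Omega^{\pt}$ itself. Dilaton and dimension equations only absorb an overall rescaling. That is exactly how the paper passes to $T_H$ with $H=\gamma^{3/2}\widetilde H(\mu^{2/3}\gamma z)=G+O(\Delta)$ in \eqref{eq:normalized-vacuum}. They cannot reduce you to a pure dilaton shift, and a pure dilaton shift of $\Omega^{\pt}$ has vanishing free energy in genus $g\ge2$ for degree reasons. So the $B_{2g}$ must come from the specific series $G(z)=\sum_k(-1)^k(2k+1)!!z^k$. Even the leading value $\int_{\Mbar_g}(T_G\Omega^{\pt})_{g,0}=\frac{(-1)^gB_{2g}}{2g(2g-2)}$ is a nontrivial theorem (conjectured in \cite{KN}, proved in \cite{CGG}), not a consequence of Stirling's formula. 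Moreover, the infinitely many corrections in $H-G$ a priori produce poles of every order $2g-2-d$ with $0<d<2g-2$, and Stirling says nothing about them; killing them \emph{is} the gap statement.

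The paper removes these poles using the Theta-class vanishing $[K_{g,n}]_d=0$ for $d>2g-2+n$ together with \eqref{eq:theta-unit}. These yield the selection rule of Corollary~\ref{cor:selection}. It combines this with the coefficient estimate of Lemma~\ref{lem:mode-filtration}: the $\Delta^d$-correction is built from $z^sP(z)$ with $2s-1\le d$ and $z^a$ with $2a\le d$. Consequently every Taylor term of order $d<2g-2$ violates the selection rule and vanishes. The same selection rule also gives regularity of the marked correlators of $T_\omega\Omega^{\pt}$. Definition~\ref{def:conifold-reg-gap} requires this regularity, and Lemma~\ref{lem:reg-preserve} uses it, but your proposal does not address it.

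There are also errors in your treatment of (i) and (ii).
\begin{itemize}
\item The $[0,1]$-theory is not the locus $\nu=0$; that locus is level $\infty$, the FJRW side. It is the sum over localization graphs supported on levels $0$ and $1$ with $d_\infty=0$. Its polynomiality does not follow from the total MSP degree bound alone: one must subtract the bipartite graphs with level-$\infty$ vertices by induction on genus.
\item $A^{\pt}$ is not a fifth root of $q$. Such an eigenvalue would not even lie in $\mathscr R$, violating Definition~\ref{def:globallocal}(1). The asymptotic analysis at $p=-\ft$ gives $\tau_{\pt}=(5^5-120)q$ and $A^{\pt}=-\ft+(5^5-120)q$.
\item The statement that the matrix of columns $(A^M)^j\mathbf 1_M$ is ``the identity in a suitable basis'' is vacuous. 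What is needed is that in the fixed basis $1,p,\dots,p^4$ the matrix $A^M$ has ones on the subdiagonal and entries in $\QQ[q,\ft]$. This follows from $\deg_q(A^M)_{ij}\le j+1-i$, and it makes that matrix upper unitriangular over $\QQ[q,\ft]$.
\item Membership in $\mathscr R$ does not imply conifold regularity, since $Y=\Delta^{-1}\in K$. Regularity of the $\varphi_0,\varphi_1$-components of $R^{-1}\mathbf 1_M$ comes from the explicit QRR formula expressing them through $\mathfrak D_m(D)I_0$ and $I_1^{[\mathfrak D_m]}$. These are differential polynomials in $I_0$ and $I_{11}$ with constant rational coefficients, which $\iota$ sends into $\CC[[\Delta]]$.
\end{itemize}
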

For the quintic threefold, Condition (i) follows from the geometric degree bound for the MSP $[0,1]$-theory established in Section~\ref{sec:msp}.
Condition (ii) is verified by the genus-zero calculations in Section~\ref{QDEsystem}.  
Condition (iii) follows from the gap theorem for the translated point theory in Section~\ref{sec:TKW}. The proof of {Theorem \ref{main2} } is completed in  Section~\ref{proofmain2}.
 
We also verify conditions (i)–(iii) for one parameter Fermat Calabi–Yau hypersurfaces of degrees $6$, $8$, and $10$ in Section~\ref{sec:uniform-genus-zero}, for local $\mathbb P^2$ in Section~\ref{sec:local-p2-application}, and for the FJRW theories associated with the four Fermat Calabi–Yau hypersurfaces in Section~\ref{sec:fjrw-application}. Section~\ref{sec:uniform-genus-zero} also explains how the argument extends to other one-parameter models once the required MSP comparisons and polynomiality are established.

 \medskip

\begin{remark}
Guo, Janda, and Ruan \cite{GJR2} suggested an alternative approach to the quintic conifold gap conjecture via the formal quintic, verifying the conjecture up to genus five. Their localization formula under development reduces the  conjecture to two key statements: the conifold gap for the formal quintic, and the conifold regularity of its correlators with special insertions. While we expect our method to prove the conifold gap for the formal quintic as well, a corresponding formal theory may not exist for general CY threefolds. Consequently, their approach cannot be directly extended to one-parameter models for which no such formal counterpart is available. 

\end{remark}

 \medskip

The rest of the paper is organized as follows. Section~\ref{sec:msp} reviews MSP theory and establishes the properties needed for the proof. Section~\ref{sec:decompR} decomposes the $R$-matrix and proves the abstract gap theorem.
Section~\ref{sec:TKW} establishes the gap property for the cone-vertex theory. Section~\ref{sec:applications} discusses applications to other one-parameter models. The appendices provide the calculation of the Dijkgraaf--Witten coordinate and the coefficient estimates.\\


\noindent {\bf Acknowledgements.}
The authors thank Jun Li, Weiping Li, Chiu-Chu Melissa Liu,
Yongbin Ruan, Emanuel Scheidegger and Yang Zhou for 
collaborations on related projects  and for valuable discussions.
   This project began in 2024, when H.-
L. Chang, L. You, and H.  Zhang were at
HKUST, where substantial progress was achieved. We are grateful to HKUST for providing a highly supportive   environment for our mathematical discussions. \\

 \section{Mixed Spin P-field revisited} \label{sec:msp}

As mentioned in the introduction, we use the master space technique to obtain an algorithm for computing GW invariants. For the quintic Calabi–Yau threefold, we consider mixed spin $P$-field (MSP) theory \cite{CLLL}. Its generalization, NMSP theory, was systematically studied in \cite{NMSP1,NMSP2,NMSP3}. Here we use the original MSP theory for simplicity. Although its degree bound is weaker than that of NMSP theory, it suffices for our proof of the gap conjecture. 

The main result of this section is the following theorem, which is entirely parallel to the corresponding result in NMSP theory. \footnote{The material in this section is based on an unpublished note written in 2017 by the first two authors and Jun Li, before the development of NMSP theory. The main computational result was already recorded in the footnote on page 623 of \cite{NMSP2}. 
}


\begin{theorem}\label{Omega01}
The MSP theory admits a sub-theory, called its $[0,1]$-part, which defines a CohFT $\Omega^M\!$ with the following properties.
\begin{itemize}
\item
The state space  is 
$\mathscr H^M  \oplus H^{3}(Q)$,  where $\mathscr H^M$ is
the image of the restriction map
\begin{equation*}
i^*:H_{\mathbb C^*}^{\mathrm{even}}(\mathbb P^5)
\longrightarrow H_{\mathbb C^*}^{\mathrm{even}}(Q\sqcup\pt),
\qquad
i:Q\sqcup\pt\hookrightarrow\mathbb P^5. 
\end{equation*}
Its kernel is $
\{\alpha\in H_{\mathbb C^*}^{\mathrm{even}}(\mathbb P^5):
5p\cup\alpha=0\}$.
Here $p$ is the hyperplane class and ${\mathbb C^*}$ acts on the last homogeneous coordinate. 
The pairing is induced by
\begin{equation*}
(a,b):=\int_{\mathbb P^5}a\cup b\cup e(\mathcal O_{\mathbb P^5}(5)).
\end{equation*}
The restrictions of $1,p,\ldots,p^4$, denoted by the same symbols, form a basis of $\mathscr H^M$.
\item
In genus zero, the $[0,1]$-correlators with even insertions coincide
with those of the $\mathcal O(5)$-twisted
$\mathbb C^*$-equivariant Gromov--Witten theory of $\mathbb P^5$.
\item  The $[0,1]$-CohFT $\Omega^{M}\!$ can be  realized as an $R$-matrix action on the local CohFT
\begin{align}\label{Omega01cohft}
\Omega^{M} = R T_R(\Omega^{Q,\tau_Q} \oplus  \Omega^\pt ),
\end{align}
where the entries of the $R$-matrix lie in the ring $\mathscr R$.
  Here $\Omega^{Q,\tau_Q}$ is the CohFT of quintic's GW shift by coordinate $\tau_Q$, and 
 $ \Omega^\pt$ is  the GW of a point.  
 \item  For $2g-2+n>0$ and $m_i\in [0,4]$, the $[0,1]$-correlator
\begin{align}\label{01des-gen}
\int_{\M_{g,n}}  \psi_1^{k_1} \cdots \psi_n^{k_n} \, \Omega^M_{g,n}( p^{m_1} ,\cdots,   p^{m_n}  )   \in  \QQ(\ft)[q]
\end{align}
is a polynomial of $q$-degree no more than
$4g-4+  \sum_{i=1}^n  m_i $. Here $\ft$ is the equivariant parameter of the $\mathbb C^*$ action.
 \end{itemize}
 \end{theorem}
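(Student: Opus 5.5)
We will construct the $[0,1]$-CohFT by following the MSP construction of \cite{CLLL}, in parallel with the NMSP treatment of \cite{NMSP1,NMSP2,NMSP3} specialized to $N=1$.

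\textbf{Construction and state space.} Recall that the MSP moduli $\mathcal W_{g,n,\bd}$ carries a $\mathbb C^*$-action scaling $\nu$, and a cosection-localized virtual class supported on a proper subspace. We take the components of the $\mathbb C^*$-fixed locus supported at levels $0$ and $1$ (i.e. $\nu=0$ or $\mu=0$ cases excluded appropriately), and define $\Omega^M_{g,n}$ by pushing forward to $\M_{g,n}$ the localized virtual class of the $[0,1]$-part, with insertions pulled back through the evaluation maps to $Q\sqcup\pt$. The gluing axioms come from the usual splitting of the virtual class along nodes. Because the evaluation factors through $H^*_{\mathbb C^*}(Q\sqcup\pt)$, only the image of $i^*$ matters on the even part; the kernel computation $\{5p\alpha=0\}$ follows from the localization formula for $\mathbb P^5$. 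The pairing arises from the twist $e(\mathcal O(5))$, and $1,\dots,p^4$ give a basis of the image. These steps are formal.

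\textbf{Genus zero.} In genus zero the $p$-fields vanish on stable maps of the relevant type. The cosection-localized class then reduces to the $\mathcal O(5)$-twisted class on $\M_{0,n}(\mathbb P^5,d)$, via the standard comparison. This is the same argument as in the NMSP genus-zero comparison.

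\textbf{$R$-matrix decomposition.} We apply virtual localization to the $[0,1]$ class. Fixed components are graphs with level-$0$ vertices (moduli of stable maps to $Q$ with $p$-fields, which reduce by \cite{CL} to the GW class of $Q$), level-$1$ vertices (spin curves contributing Hodge-type classes, hence the point theory after Grothendieck--Riemann--Roch), and edges which are $\mathbb C^*$-invariant rational curves to $\mathbb P(1,5)$. Following the Givental--Teleman packaging argument of \cite{NMSP3}, the vertex and edge contributions assemble into $RT_R(\Omega^{Q,\tau_Q}\oplus\Omega^\pt)$. Here $R$ is identified from the genus-zero $I$-function of the twisted theory through the symplectic fundamental-solution comparison. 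Membership of the entries in $\mathscr R$ follows from the finite generation of the genus-zero data, namely $I$-functions satisfying the Picard--Fuchs equation over $K$.

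\textbf{Degree bound (main obstacle).} The polynomiality in $q$ with degree $\le 4g-4+\sum m_i$ is the step we expect to be hardest. The plan is:

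1. Show that the $[0,1]$-correlators are independent of the equivariant parameter beyond a rational factor, and are polynomial in $q$. The moduli with $\mathbb C^*$-action on the full MSP space is proper after cosection localization, so the invariants are polynomials in $\ft$ of bounded degree in each $q$-degree $d$. Meanwhile the virtual dimension grows linearly in $d$.
2. Compare the virtual dimension, which is linear in $d$ and depends on $g$ and the insertion degrees, with the equivariant degree of the integrand. This forces the coefficient of $q^d$ to vanish once $d$ exceeds the stated bound.

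The delicate point is choosing the correct grading so that the bound comes out as $4g-4+\sum m_i$. To do this, we plan to track the contributions of level-$\infty$ components and show they drop out of the $[0,1]$-part, as in the footnote on page 623 of \cite{NMSP2}, mirroring the NMSP degree bound with $N=1$.
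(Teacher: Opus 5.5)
Your overall architecture matches the paper's: localization of the $[0,1]$ graphs, packaged as an $R$-matrix action via quantum Riemann--Roch, then a dimension argument plus a separate treatment of level-$\infty$ graphs for the degree bound. However, the degree-bound step, which you rightly call the crux, is set up wrongly, and the idea that actually proves it is missing.

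Properness of the cosection-degeneracy locus makes only the $q^d$-coefficients of the \emph{total} MSP correlator \eqref{mspsum} polynomial in $\mathfrak t$. The $[0,1]$-part is a partial sum of localization contributions whose coefficients lie only in $\mathbb Q(\mathfrak t)$, so your Step~1 cannot be run on it. For the total correlator the bound is immediate and needs no delicate choice of grading. Since $\operatorname{vdim}\mathcal W_{g,n,(d,0)}=d+1-g+n$, the coefficient of $q^d$ is a multiple of $\mathfrak t^{\sum_i(m_i+k_i)-(d+1-g+n)}$, so it vanishes unless $d+1-g+n\le\sum_i(m_i+k_i)$. Ancestor classes pulled back from $\overline{\mathcal M}_{g,n}$ force $\sum_ik_i\le 3g-3+n$, and together these give $d\le 4g-4+\sum_im_i$.

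The real content is the passage from the total correlator to the $[0,1]$-part. Graphs with level-$\infty$ vertices do not ``drop out.'' They occur for $d_\infty=0$: on a stable level-$\infty$ vertex $\mathscr L^5\cong\omega^{\log}$ and $\mathscr N\cong\mathscr L^{-1}$, so $\deg\mathscr N<0$ there and is compensated along the edges. What has to be proved is that the \emph{sum} of these contributions satisfies the same $q$-degree bound. The paper gets this from the bipartite-graph argument of \cite[Section~4]{NMSP2}, applied to ordinary MSP theory with induction on the genus, and then subtracts it from the total. Without such inductive control of the level-$\infty$ contributions you have no bound for the $[0,1]$-theory. (The footnote on p.~623 of \cite{NMSP2} records the formula for $\tau_Q$; it does not address this step.)

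Two further points need repair. First, the claim that the entries of $R$ lie in $\mathscr R$ is asserted, not proved. A Picard--Fuchs equation over $K$ does not by itself place the Birkhoff/QRR output in the specific ring generated by $I_0^{\pm1}$, $(D\tau)^{\pm1}$ and their derivatives. The paper proves it as follows.
\begin{enumerate}
\item It computes the unit column $R(z)^{-1}\mathbf 1_M$ explicitly. The quintic component comes from the QRR factor combined with the hypergeometric modification \eqref{Goperator}; this yields the combinations $I_j^{[\mathfrak D_m]}$ and the shifts $\tau_2,\tau_3$, which are shown to lie in $\mathscr R$ using the recursion $I_j^{[k+1]}=DI_j^{[k]}-I_{jj}I_{j-1}^{[k]}$.
\item The point component has coefficients that are polynomials in $Y=(1-5^5q/\mathfrak t)^{-1}$, up to a constant.
\item The remaining columns of $R^{-1}$ are generated by the relative-QDE recursion \eqref{msp-R}. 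This uses that $A^M$ has entries in $\mathbb Q[q,\mathfrak t]$, with $1$'s on the subdiagonal and zeros below it.
\item Symplecticity passes the conclusion from $R^{-1}$ to $R$.
\end{enumerate}

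Second, your localization picture swaps levels. Level-$1$ vertices have $\phi=\rho=0$ and $\mathscr L,\mathscr N$ trivial, so they are ordinary stable curves whose Hodge classes enter through the obstruction theory. Spin curves live at level $\infty$, which is precisely the sector the $[0,1]$-part excludes. Likewise, the $\mathbb P(1,5)$ edges join levels $1$ and $\infty$, while $[0,1]$ edges are lines of the cone $C_Q$ from $Q$ to $O$.

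Finally, the level-$0$ vertex theory is a \emph{twisted} quintic theory, so quantum Riemann--Roch is needed there too, not only at the point. The relation $S^{Q,\mathrm{tw}}_{\tau_{Q,\mathrm{tw}}}\Gamma^Q=R^{\mathrm{tw}}S^Q_{\tau_Q}$, together with the ancestor--descendant relation, is what produces the shift $\tau_Q$ and the factorization $R=R^{\mathrm{loc}}\,\mathrm{diag}(R^{\mathrm{tw}},\Gamma^{\mathrm{pt}})$.
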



 We briefly recall the development of MSP and NMSP  theory. Prior to the gap conjecture of \cite{HKQ}, the work of 
 Bershadsky-Cecotti-Ooguri-Vafa (\cite{BCOV}) and Yamaguchi-Yau (\cite{YY})
on the $B$ side led to  a series of   conjectures concerning all genus Gromov Witten theory of  Calabi Yau threefolds.  For the quintic, mixed-spin $P$-field theory interpolates between the $P$-field theory of the Gromov–Witten phase \cite{CL} and the spin-field theory of the FJRW phase \cite{CLL15} by promoting the Kähler parameter in Witten's GLSM to a field $\mu$. Its generalization with $N$ such fields is called NMSP theory.   For sufficiently large $N$, NMSP theory provides geometric explanations for the BCOV/Yamaguchi–Yau structures, including the BCOV Feynman rule with the $3g-3$ degree bound, the holomorphic anomaly equation, Yamaguchi–Yau polynomiality, and the analyticity of $F_g$; see \cite{NMSP2,NMSP3}. These make the quintic the first compact CY threefold for which all these predictions were established.  The MSP and NMSP moduli spaces have also been generalized, via $\Omega$-stability, to complete-intersection Calabi–Yau threefolds in toric varieties \cite{CGLLZ}, and proofs of the corresponding predictions are in progress.  The construction of \cite{CGLLZ} also yields BCOV-type higher-genus structures for Landau–Ginzburg theories beyond Gromov–Witten theory, including FJRW and hybrid theories \cite{Z}. 

 \subsection{The moduli space}

An MSP field $\xi$ consists of a pointed twisted curve $(\sC,\Sigma)$, two representable line bundles $\sL$ and $\sN$, and fields:
\beq\label{xi0}
\xi=(\sC,\Sigma,\sL,\sN,\phi=(\phi_1,\cdots,\phi_5),\rho,\mu,\nu),
\eeq
where
$\phi_i\in H^0(\sL)$, $\rho\in H^0(\sL^{-5}\otimes\omega_{\sC}^{\log})$, and $
(\mu,\nu)\in H^0(\sL\otimes\sN\oplus\sN)$,
satisfy, among other things (cf. \cite{CLLL}), that all $(\phi,\mu)$, $(\rho,\nu), (\mu,\nu)$ are nowhere vanishing.  We require that $\rho$ vanishes on the scheme markings $\Sigma=\{\Sigma_\ell\}$ (called $1_\rho$ markings).  If $\text{Aut}\,\xi$ is finite, we call $\xi$ stable. 
\\

We denote by $\cW_{g,n,\bd}$ the DM stack of the moduli of stable MSP fields of genus $g$, 
$n$-markings of type $1_\rho$, and with $\bd= (d_0,d_\infty)$ where $d_0=\deg \sL\otimes\sN$ and $d_\infty=\deg\sN$. Further, $\cW_{g,n,\bd}$ is a $G=\CC\sta$ stack, where $G$ acts on $\xi\in \cW_{g,n,\bd}$ by scaling its field $\mu$. We denote its cosection localized ($G$-equivariant) virtual cycle by $[\cW_{g,n,\bd}]\virt$. 
For Gromov Witten theory we only need $d_\infty=0$.\\

 At the $\ell$-th marking, the formula $\ev_\ell(\xi)=[\phi_1|_{\Sigma_\ell},\cdots, \phi_5|_{\Sigma_\ell}, \mu/\nu|_{\Sigma_\ell} ] \in \sL|_{\Sigma_\ell}^{\oplus 6}$ defines a $G$-equivariant evaluation map $$\ev_\ell: \cW_{g,n,\bd}\lra \PP^5,$$
  where $G$ acts on $\PP^5$ by scaling the last coordinate. 
Denote by $p$ the equivariant hyperplane class in $H^2_G(\PP^5)$. 
Let $\pt=[0,0,0,0,0,1]\in (\PP^5)^G$ be the isolated fixed point, and regard $Q\sub\PP^4\sub (\PP^5)^G$ via its tautological embedding. Then
$$p|_{Q}=H\in H^2(Q) \and p|_{\pt}:=-\ft .
$$
By sending $p^i\in H^{2i}_G(\PP^5)$ to $(p^i|_{Q},p^i|_{\pt})$, 
we get the 
linear map
\beq\label{Harrow}
H\sta_G(\PP^5) = \QQ(\ft) [p]/\bl p^5(p+\ft)\br\lra   \sH :=  H^{\mathrm{even}}(Q)\oplus H^0(\pt).
\eeq
 
Further, the kernel of the restriction \eqref{Harrow} is spanned by $p^4(p+\ft)$. 
 The integral of the pullback of any such class against $[\cW_{g,n,\bd}]\virt$  vanishes by the torus localization formula. We therefore obtain the total MSP theory:  for $\tt_1(z),\cdots,\tt_n(z)\in \sH[z],$
\begin{equation}\label{mspsum}
 \Bigl<  \prod_{i=1}^n \tt_i(\psi)\Bigr>\msp_{g,n} 
=  \sum_{d\geq 0} (-1)^{d+1-g } q^d \int_{[{\mathcal W}_{g,n,(d,0)}]\virt} \prod_{i=1}^n  \tt_i(\psi_i).
\end{equation}  
 
 The torus action gives rise to localization graphs. Each graph has vertices at level $0$ (the quintic Gromov–Witten sector, characterized by $\mu=0$), level $1$ (the point sector, characterized by $\phi=\rho=0$), or level $\infty$ (the FJRW sector, characterized by $\nu=0$), joined by edges. A graph containing an edge from level $0$ to level $\infty$ is called irregular and contributes zero to the localization formula \cite{van}. Hence we may restrict to regular graphs and decompose the localization sum into a $[0,1]$ part and a $(1,\infty]$ part. For details, see \cite{CLLL}.\\

 We say a graph is supported on $[0,1]$ if all its vertices lie at level $0$ or level $1$.  Let $\gndzo$ be the set of localization graphs  that are supported on $[0,1]$.  The $[0,1]$ theory is defined as the sum of the localization contributions  of these graphs:
$
[\cW_{g,n,(d,0)}]^{[0,1]}:= \sum_{\locg \in  \gndzo}  \frac{[F_\Theta]\virt}
{e(N\virt_{\Theta})}
$.

\begin{definition}\label{01theory-def}
For any $2g-2+n>0$ and $\tt_i(z) \in \sH[\![z]\!]$, $i=1,\cdots,n$,  define
$$
\bigl[\otimes_i^n \tau_{i}(\bar\psi_i)\bigr]^{[0,1]}_{g,n}
=  \sum_{d\ge 0}  (-1)^{d+1-g } q^d 
\bl    \text{pr}^W_{g,n}   \br\lsta
 \Bigl(
 \prod_{i=1}^n \ev_i\sta  \tt_i(\bar\psi_i)\cdot [\cW_{g,n,(d,0)}]^{[0,1]} 
\Bigr) 
$$
which lies in $ H^*(\M_{g,n},\QQ(\ft)[[q]])$. 
Here $\pr^W_{g,n}:\cW_{g,n,(d,0)}\to\M_{g,n}$ denotes the projection. 
For constant insertions $\tt_i(z)=t_i$,  these classes form
a CohFT as $(g,n)$ ranges over all stable pairs.  We denote it by $\Omega^M$ and call it the  MSP-$[0,1]$ CohFT.

\end{definition}

\subsection{Proof of the Theorem~\ref{Omega01}}

The assertions on the state space and genus-zero theory follow directly from the preceding geometric construction, as in \cite[Section~1]{NMSP2}. 

For the CohFT formula, the virtual localization argument of \cite[Section~3]{NMSP2}, with all translation terms retained, gives
$$
\Omega^M = R^{\mathrm{loc}} T_{R^{\mathrm{loc}}}(\Omega^{Q,\tw,\tau_{Q,\tw}} \oplus \Omega^{\pt,\tw}) .
$$
The localization $R$-matrix   is characterized by the Birkhoff factorization 
$$
S^M(z) = R^{\loc}(z) \cdot  \diag( S_{\tau_{Q,\tw}}^{Q,\tw}(z), S_{\tau_{\pt}}^{\pt,\tw}(z))   , 
$$
where $\tau_{Q,\mathrm{tw}}$ and $\tau_{\mathrm{pt}}$ are the Dijkgraaf–Witten coordinates (see \cite[Section~1.4]{NMSP2}).

Applying Grothendieck–Riemann–Roch to the twisting classes gives the quantum Riemann–Roch operators $\Gamma^Q(z)$ and $\Gamma^{\mathrm{pt}}(z)$, including the constant factors identifying the untwisted and twisted pairings. 
These operators map the untwisted Lagrangian cones to the twisted ones, as in \cite[Corollary~4]{CG}.  Hence there exist a unique matrix $R^{\tw} = R_0^{\tw}+R_1^{\tw} z+R_2^{\tw} z^2+\cdots  $  and the coordinate $\tau_Q$, such that
 $$
S_{\tau_{Q,\tw}}^{Q,\tw}(z) \Gamma^Q(z)  = R^{\tw}(z) S_{\tau_{Q}}^Q(z) .
 $$
 Combining quantum Riemann–Roch with Givental’s reformulation of the Kontsevich–Manin ancestor–descendant relation \cite[Theorem~1 and Appendix~2]{CG}, we obtain, using the above Birkhoff factorization,
$$
\Omega^{Q,\tw,\tau_{Q,\tw}}  = R^{\tw} T_{R^{\tw}}\Omega^{Q,\tau_{Q}}. 
$$
For the point component,
\[
S_{\tau_{\mathrm{pt}}}^{\mathrm{pt},\mathrm{tw}}(z)
=
S_{\tau_{\mathrm{pt}}}^{\mathrm{pt}}(z)
=
e^{\tau_{\mathrm{pt}}/z},
\]
 so this factor commutes with the scalar operator $\Gamma^{\mathrm{pt}}(z)$.
 Composing the two actions therefore gives
$$
R(z) = R^{\loc}(z) \cdot \diag (R^{\tw}(z), \Gamma^{\pt}(z))
$$
and the CohFT relations
$$
\Omega^M = R T_R(\Omega^{Q,\tau_Q} \oplus \Omega^{\pt}). 
$$ 
 This proves \eqref{Omega01cohft}.
Moreover, the resulting $R(z)$ is characterized by  
 \begin{equation} \label{birk}
S^M(z)\cdot   \begin{pmatrix}  \Gamma^Q(z)  & \\ &  \Gamma^{\pt}(z)   \end{pmatrix}      = R(z)  \begin{pmatrix}  S^Q_{\tau_Q}(z) & \\ & e^{\tau_{\pt}/z}  \end{pmatrix} .
\end{equation}

Finally, we prove the degree bound. Since
$\operatorname{vdim}\mathcal W_{g,n,(d,0)}=d+1-g+n$,
a nonzero coefficient of $q^d$ in the total MSP correlator with ancestor insertions $p^{m_i}\bar\psi_i^{k_i}$ must satisfy
\[
d+1-g+n\leq\sum_{i=1}^n(m_i+k_i),
\qquad
\sum_{i=1}^n k_i\leq3g-3+n.
\] 
 Thus the total ancestor correlator is a polynomial in $q$ of degree at most $4g-4+\sum_i m_i$. Applying the bipartite-graph argument of \cite[Section~4]{NMSP2} directly to the ordinary MSP theory, induction on the genus gives the same bound for the contributions of bipartite graphs containing level-$\infty$ vertices. Subtracting their sum from the total correlator proves the asserted degree bound for the $[0,1]$-theory.

 
  This proves Theorem~\ref{Omega01} except for the assertion that the coefficients of $R(z)$ lie in $\mathscr R$, which is established in the next subsection.

\subsection{Properties of  $A^M$, $A^Q$ and the $R$-matrix} \label{QDEsystem}

 In this subsection, 
we prove the assertion in Theorem~\ref{Omega01} that the entries of $R(z)$ lie in $\mathscr R[[z]]$ for the quintic case. \footnote{Here $\mathscr R$ is understood with $q$ replaced by $q/\mathfrak t$; alternatively, we may set $\mathfrak t=1$ at the end.} We also explicitly compute the QDE matrices $A^M$, $A^Q$ and $A^{\mathrm{pt}}$, together with $R(z)^{-1}\mathbf1_M$. 

We will use the bases $\{\mathbf 1_M,p,\ldots,p^4\}$ on the
master space,  $\{\mathbf 1_Q,H,H^2,H^3\}$ on the quintic and   $\{ \mathbf1_{\mathrm{pt}}\}$ on the point.

{

\subsubsection{Computation of $A^M$ and the $S^M$-matrix.}
We compute $A^M$ and $S^M$ by Birkhoff factorization, following \cite[Section~1.5]{NMSP2}. The mirror theorem gives
\begin{equation}\label{msp-unit}
S^M(z)^*\mathbf1_M=\frac{J^M(0,z)}z
=e^{-120q/z}\sum_{d\ge0}q^d
\frac{\prod_{m=1}^{5d}(5p+mz)}
{\prod_{m=1}^{d}(p+mz)^5(p+\ft+mz)},
\end{equation}
where $p^4(p+\ft)=0$. 
Put $D_p=p+zD$. The QDE $$D_pS^M(z)^*=S^M(z)^* A^M$$ and the property $S^M(z)=\id+O(z^{-1})$ imply  that
\begin{equation}\label{msp-A}
S^M(z)^*p^{j+1}
=D_p\bigl(S^M(z)^*p^j\bigr)
-\sum_{i=0}^j(A^M)_{ij}S^M(z)^*p^i,\ 0\le j<4
\end{equation}
is of the form $p^{j+1}+O(z^{-1})$.
Starting from \eqref{msp-unit}, these formulas determine $S^M(z)^*$ recursively and all entries of $A^M$.

The coefficient of $q^dz^{-k}$ in $S^M(z)^*p^j$ is a polynomial in $p,\ft$ of degree $k-d+j$. Hence $\deg_q(A^M)_{ij}\le j+1-i$, and so the terms through $q^5$ suffice to compute $A^M$. In the ordered basis $1,p,\ldots,p^4$, the recursion gives
\[
A^M=
\begin{pmatrix}
0&*&*&*&*\\
1&*&*&*&*\\
0&1&*&*&*\\
0&0&1&*&*\\
0&0&0&1&*
\end{pmatrix},
\]
 where each starred entry belongs to $\mathbb Q[q,\ft]$. The matrix with columns $(A^M)^j\mathbf1_M$, $0\le j\le4$, is therefore upper unitriangular and has an inverse over $\mathbb Q[q,\ft]$.

\subsubsection{The quintic coordinate $\tau_Q$ and quintic component of $R(z)^{-1}\mathbf 1_M$.} \label{quintictauq}
We consider the   $\tau_Q$  defined in \eqref{birk}. A footnote on   p.~623 of \cite{NMSP2} states that
$$\tau_Q=\tau_0+\tau_1H+\tau_2H^2+\tau_3H^3$$ where (see a detailed calculation in Appendix~\ref{app:dwmap})
\begin{equation}\label{msp-tau} \small
\tau_0=-120q,\quad \tau_1= \frac{I_1}{I_0}-\log q,\quad
\tau_2=-\frac{Y}{2\ft I_0^2I_{11}},  \quad
\tau_3=\frac{Y}{\ft^2 I_0^2}
\left(-\frac1{12}+\frac Y4-\frac{B_1}2-\frac{A_1}4\right).
\end{equation}
The same formula holds for other smooth Calabi--Yau hypersurfaces,
with only the term $\tau_0$ modified accordingly.

Put $I_{22}=Y/(I_0^2I_{11}^2)$ and $I_{33}=I_{11}$. Since $H* H=(I_{22}/I_{11})H^2$ and multiplication by $H^2,H^3$ is classical \cite[Appendix~A]{NMSP2}, quantum multiplication by $H+D\tau_Q$ has matrix
\begin{equation}\label{msp-loc}
A^Q=-120q\,\id+
\begin{pmatrix}
0&0&0&0\\
I_{11}&0&0&0\\
D\tau_2&I_{22}&0&0\\
D\tau_3&D\tau_2&I_{11}&0
\end{pmatrix}.
\end{equation}

\medskip

Now we compute the quintic component of $R(z)^{-1}\mathbf1_M $.
The Birkhoff factorization \eqref{birk} and the MSP mirror theorem \eqref{msp-unit} give
\[
\left.R(z)^{-1}\mathbf1_M\right|_Q
=
S^Q_{\tau_Q}(z) \cdot \Gamma^Q(z)^{-1}   \cdot  (-z)^{-1}e^{120q/z}{\left.I^M(q,-z)\right|_Q}  .
\]
Following the proof of Theorem 4.6 in \cite{CCIT2}, we combine the quantum Riemann–Roch factor with the hypergeometric modification using the $G_0$-operator. This gives
\begin{equation}  \label{Goperator}
\Gamma^Q(z)^{-1}\left.I^M(q,-z)\right|_Q
=
q^{H/z} e^{-G_0(-zD,z)}  
 I^Q(q,-z).
\end{equation}
Here $I^Q(q,z)$ denotes the ordinary quintic $I$-function
and
$
G_0(x,z)
=
\sum_{r,l\ge0}
s_{r+l-1}\frac{B_r}{r!}\frac{x^l}{l!}z^{r-1}$ with  $s_{-1}=0$, $s_0=-\log\ft$, 
$s_k=\frac{(-1)^k(k-1)!}{\ft^k}$ for $k\geq 1$.

The differential operator on the right hand side of \eqref{Goperator} has the expansion
\[
e^{-G_0(-zD,z)}
=
\ft^{-D-\frac12}
\exp\left(
\sum_{\ell\ge1}
\frac{B_{\ell+1}(D+1)}{\ell(\ell+1)}
\left(\frac z{\ft}\right)^\ell
\right)
=:
\ft^{-D-\frac12}
\sum_{m\ge0}
\left(\frac z{\ft}\right)^m
\mathfrak D_m(D).
\]
Here $B_\ell(x)$ is the Bernoulli polynomial. This defines $\mathfrak D_m(D)\in\mathbb Q[D]$ with $\deg_D\mathfrak D_m\le2m$. Since $\ft^{-D}f(q)=f(q/\ft)$, the operator rescales the $q$ in  the quintic $I$-function to $q/\ft$.

Recall the ordinary quintic mirror theorem gives
\[
S^Q_{\tau_1H}(z)\, q^{H/z}
 {I^Q(q/\ft,-z)}
={-z} I_0(q/\ft) \mathbf1_Q.
\]
 Differentiate this identity and use the QDE. We have, for every $k\ge0$,
\[
S^Q_{\tau_1H}(z)\, q^{H/z}
D^k {I^Q(q/\ft,-z)} 
=({-z})
\textstyle \sum_{j=0}^3H^jz^{-j}I_j^{[k]}(q/\ft),
\]
 where $
I_j^{[0]}=\delta_{j0}I_0$,
$I_0^{[k+1]}=DI_0^{[k]}$, and
$I_j^{[k+1]}=DI_j^{[k]}-I_{jj}I_{j-1}^{[k]}$ for $1\le j\le3$.

Quantum multiplication by $H^2$ and $H^3$ is classical. Applying QDEs along these directions, and using  the string equation, we have
\[
S^Q_{\tau_Q}(z)
=
e^{\tau_0/z}
\left(1+\frac{\tau_2H^2+\tau_3H^3}{z}\right)
S^Q_{\tau_1H}(z),
\]
where the exponential in $H^2,H^3$ truncates because $H^4=0$. Combining these identities and using $\tau_0=-120q$, we obtain
\[
\left.R(z)^{-1}\mathbf1_M\right|_Q
=
\ft^{-1/2}
\left(1+\frac{\tau_2H^2+\tau_3H^3}{z}\right)
\sum_{j=0}^3H^j
\sum_{m\ge0}
\ft^{-m}z^{m-j}
I_j^{[\mathfrak D_m]}(q/\ft).
\] 
Here for 
$\mathfrak D_m(D)=\sum_{k=0}^{2m}a_{m,k}D^k$, we define
$I_j^{[\mathfrak D_m]}
:=\sum_{k=0}^{2m}a_{m,k}I_j^{[k]}$.
All coefficients of this quintic component $R(z)^{-1}\mathbf1_M |_Q$ lie in $\mathscr R$ by Lemma \ref{formulatauq} and \ref{lem:I-ring}.

\subsubsection{The point component.}
Applying the asymptotic analysis of \cite[Section~6]{NMSP2} to the $\mathcal O(5)$-twisted theory of $\mathbb P^5$ at $p=-\mathfrak t$ gives the exponential factor $e^{5^5q/z}$. The mirror shift $J^M(0,z)=e^{-120q/z}I^M(q,z)$ then yields
\[
\tau_{\mathrm{pt}}=(5^5-120)q
\and
A^{\mathrm{pt}}=-\ft+D\tau_{\mathrm{pt}}
=-\ft+(5^5-120)q.
\]
 After removing the exponential, the Picard–Fuchs recursion and the quantum Riemann–Roch factor give, with $Y=(1-5^5q/\ft)^{-1}$,
\[
\left.R(z)^{-1}\mathbf1_M\right|_{\mathrm{pt}}
=
c\cdot Y^2\sum_{k\ge0}
f_k(Y)\left(\frac z{\ft}\right)^k\mathbf1_{\mathrm{pt}},
\qquad
f_0=1,\quad f_k\in\mathbb Q[Y]
\]
 for some constant $c$ from the GRR factor. 
Hence every coefficient of the point column belongs to $\mathscr R$.

\subsubsection{The remaining columns.}
The genus-zero QDEs, written using $D_p=p+zD$, are
\[
\begin{aligned}
D_pS^M(z)^* 
&=S^M(z)^*A^M,\\
D_p S^Q_{\tau_Q}(z)^* 
=S^Q_{\tau_Q}(z)^*A^Q,  &\qquad
D_p e^{\tau_{\mathrm{pt}}/z} 
=e^{\tau_{\mathrm{pt}}/z}A^{\mathrm{pt}}.
\end{aligned}
\]
Since the GRR factors are independent of $q$ and commute with $p$, differentiating the adjoint of \eqref{birk} 
gives
\[
-zDR(z)^{-1}+A^{\mathrm{loc}}R(z)^{-1}
=R(z)^{-1}A^M,\ \
\text{where} \ \ A^{\mathrm{loc}}=A^Q\oplus A^{\mathrm{pt}}.
\]
This proves the relative QDE \eqref{eq:two-QDE}. The subdiagonal form of $A^M$ then gives
\begin{equation}\label{msp-R}
R(z)^{-1}p^{j+1}=(A^{\mathrm{loc}}-zD)R(z)^{-1}p^j
-\sum_{i=0}^j(A^M)_{ij}R(z)^{-1}p^i,\qquad 0\le j<4.
\end{equation}
The unit column $R(z)^{-1}\mathbf 1_M$ and both QDE matrices have coefficients in $\mathscr R$. Differential closure and this recursion give the same conclusion for $R(z)^{-1}$. By symplecticity, the coefficients of $R(z)$ also lie in $\mathscr R$, completing the proof of Theorem~\ref{Omega01}. 
}

\section{Decomposition of the $R$-matrix and the proof of Theorem \ref{thm:main}}  \label{sec:decompR}

 We start with a master space   $R$-matrix satisfying the identity
\eqref{eq:master-cohft-R} and the local-global QDE system  
\eqref{eq:two-QDE}. The aim of this section is to factor out a
conifold-regular part of the $R$-matrix, leaving a residual factor with a
simpler edge contribution.

In this section, "conifold regular" is abbreviated as "regular".


\subsection{Decomposition of the $R$-matrix}


Let $\eta^M,\eta^X,\eta^\pt$ be the corresponding pairings in ~\eqref{eq:master-cohft-R}, let $\eta^{\mathrm{loc}}=\eta^X\oplus\eta^{\mathrm{pt}}$.
We use bracketed subscripts to denote coordinates:
\[
[v]_i=\eta^{\mathrm{loc}}(v,\varphi^i)\quad(0\le i\le3),
\qquad [v]_{\mathrm{pt}}
=\eta^{\mathrm{loc}}(v,\mathbf1_{\mathrm{pt}}^\vee).
\]
For matrices, $[B]_{ij}$ is the entry in row $i$, column $j$. 
Let $\sH^{\leq1}=\operatorname{span}\langle\varphi_0,\varphi_1\rangle, \sH^{\geq2}=\operatorname{span}\langle\varphi_2,\varphi_3\rangle$.

\begin{proposition}\label{prop:decompose-R}
The $R$-matrix in Definition~\ref{def:globallocal} admits a
 factorization
\[
R=R^{\mathrm{reg}}R^{\mathrm{prin}}
\]
where $R^{\mathrm{reg}}$ and its inverse are regular, and
\begin{equation}\label{eq:first2row}
[(R^{\mathrm{prin}})^{-1}v]_i=[v]_i,\qquad i=0,1.
\end{equation}
\end{proposition}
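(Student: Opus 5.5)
The plan is to construct $(R^{\mathrm{reg}})^{-1}$ directly as a map $\sH^M\to\sH^X\oplus\mathbb C\mathbf1_{\mathrm{pt}}$ whose $\varphi_0$- and $\varphi_1$-rows agree with those of $R^{-1}$. Then set $R^{\mathrm{prin}}:=(R^{\mathrm{reg}})^{-1}R$. Condition \eqref{eq:first2row} for $(R^{\mathrm{prin}})^{-1}=R^{-1}R^{\mathrm{reg}}$ says exactly that the linear functionals $\ell_i(w):=[R(z)^{-1}w]_i$, $i=0,1$, satisfy $\ell_i=[\,\cdot\,]_i\circ(R^{\mathrm{reg}})^{-1}$. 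So the proposition reduces to two claims:
\begin{enumerate}
\item the two rows $\ell_0,\ell_1$ of $R(z)^{-1}$ are conifold regular;
\item these rows can be completed to a regular matrix $(R^{\mathrm{reg}})^{-1}\in\operatorname{End}[[z]]$ with regular inverse.
\end{enumerate}

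\emph{Step 1: regularity of the rows $\ell_0,\ell_1$.} I use the recursion \eqref{msp-R}, which follows from the relative QDE \eqref{eq:two-QDE} together with the subdiagonal shape of $A^M$. By condition (3) of Definition~\ref{def:globallocal}, $\ell_0(\mathbf1_M)$ and $\ell_1(\mathbf1_M)$ are regular. For $A^X$ (quantum multiplication by the divisor plus a shift), degree considerations give
\[
[A^X v]_0=c\,[v]_0,\qquad [A^X v]_1=c\,[v]_1+a\,[v]_0,
\]
where $a=\eta^X(\varphi^1,A^X\varphi_0)$ is regular by condition (2). Hence the $\varphi_0,\varphi_1$ components of $(A^{\mathrm{loc}}-zD)R^{-1}p^j$ depend only on the $\varphi_0,\varphi_1$ components of $R^{-1}p^j$, and $D$ preserves regularity. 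It remains to see that the scalar $c$ is regular.

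For this, set $z=0$ in \eqref{eq:two-QDE}. This gives $A^{\mathrm{loc}}=R_0^{-1}A^MR_0$, so $\operatorname{tr}A^M=4c+A^{\mathrm{pt}}$. Therefore $c=(\operatorname{tr}A^M-A^{\mathrm{pt}})/4$ is regular by condition (1). Induction on $j$ in \eqref{msp-R}, using that the entries $(A^M)_{ij}$ are regular, then shows that $\ell_0(p^j)$ and $\ell_1(p^j)$ are regular for all $0\le j\le4$.

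\emph{Step 2: completion.} Regularity of entries is automatic. The only issue is that the inverse must be regular, and it suffices to check this at $z^0$ and $\Delta=0$, since higher $z$-orders are then inverted recursively. To do so I express $\ell_0,\ell_1$ at $z=0$ in the basis $v_j=(A^M)^j\mathbf1_M$, whose change-of-basis matrix to $\{p^j\}$ is regular with regular inverse by condition (2). From $R_0^{-1}v_j=(A^{\mathrm{loc}})^jR_0^{-1}\mathbf1_M$ and the triangular form above, with $[R_0^{-1}\mathbf1_M]_0=I_0$, I get
\[
\ell_0(v_j)=c^jI_0,\qquad \ell_1(v_j)=c^j r_1+j\,c^{j-1}a\,I_0,
\]
where $r_1=[R_0^{-1}\mathbf1_M]_1$. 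The $2\times2$ minor on $v_0,v_1$ equals $a\,I_0^2$, which is a unit in $\mathbb C[[\Delta]]$: indeed $\iota(I_0)=\omega_0$ has $\omega_0(0)\ne0$, and $a$ is a unit by condition (2). I then complete these two rows by the three constant coordinate functionals dual to $v_2,v_3,v_4$. The resulting $5\times5$ matrix is block triangular with unit determinant $aI_0^2$ at $z=0$, so it has a regular inverse. Taking the $z$-dependent rows $\ell_0(z),\ell_1(z)$ for the first two rows and keeping the constant completion gives $(R^{\mathrm{reg}})^{-1}$ with regular inverse.

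The main obstacle is Step 1, namely controlling the diagonal scalar $c$ of $A^X$, which is not directly among the hypotheses; the trace identity at $z=0$ is what I would rely on. A secondary point I would double-check is whether one also wants $R^{\mathrm{reg}}$ symplectic, so that $R^{\mathrm{prin}}$ acts on CohFTs. If so, I would instead complete via the standard construction of a symplectic extension, using that $\ell_0,\ell_1$ at $z=0$ span an isotropic subspace: this holds because the $\eta^X$-pairing between $\varphi^0$ and $\varphi^1$ vanishes. I would then repeat the unit-determinant argument.
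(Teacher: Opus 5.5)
As a proof of the literal wording, your argument works, and it is simpler than the paper's. Prescribing the rows $\ell_0,\ell_1$ of $(R^{\mathrm{reg}})^{-1}$ and completing by $z$-independent rows reduces invertibility to the minor $aI_0^2$. This is the same determinant $[A^X]_{10}\mathcal R_0(0)^2$ that appears in Lemma~\ref{lem:further-reg}. Your trace identity $4c=\operatorname{tr}A^M-A^{\mathrm{pt}}$, obtained from $A^{\mathrm{loc}}=R(0)^{-1}A^MR(0)$, is a real improvement: it justifies the regularity of $[A^X]_{00}$, which Lemma~\ref{lem:further-reg} uses without comment.

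One small fix in Step~1: \eqref{msp-R} relies on the subdiagonal shape of $A^M$ in the basis $p^j$. That is a quintic-specific fact and is not part of Definition~\ref{def:globallocal}. Use instead $e_{j+1}=(A^M-zD)e_j$. For these vectors $R^{-1}e_{j+1}=(A^{\mathrm{loc}}-zD)R^{-1}e_j$ holds exactly, and condition~(2) makes $\{e_j\}$ a regular basis.

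The genuine gap is the point you call secondary. The factorization must consist of symplectic matrices, and the rest of Section~\ref{sec:decompR} depends on this:
\begin{itemize}
\item the block form \eqref{eq:residual-matrix-display} follows from \eqref{eq:first2row} only by symplecticity of $R^{\mathrm{prin}}$;
\item the edge term $V_{R^{\mathrm{prin}}}$ and Propositions~\ref{prop:V-prin} and \ref{prop:r-pt-reg} presuppose it;
\item the proof of Theorem~\ref{thm:main} lets $R^{\mathrm{reg}}(z)^{-1}$ act on $\Omega^M$ through Lemma~\ref{lem:reg-preserve}, which is stated for symplectic matrices.
\end{itemize}
The paper's proof consists entirely of constructing a \emph{symplectic} regular $R^{\mathrm{reg}}$. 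Your completion by rows dual to $v_2,v_3,v_4$ is not symplectic in general.

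Your fallback is not yet an argument, because isotropy at $z=0$ is not enough. You need the twisted isotropy $\eta^M(R(z)\varphi^i,R(-z)\varphi^j)=\eta^{\mathrm{loc}}(\varphi^i,\varphi^j)=0$ for $i,j\in\{0,1\}$ and all $z$. You then need a regular completion that is symplectic for the pairing $\eta^M(\,\cdot\,(z),\,\cdot\,(-z))$. The paper does this in four moves:
\begin{enumerate}
\item It sets $R^{\mathrm{reg}}\varphi^i=R\varphi^i$ for $i=0,1$. Under symplecticity this is equivalent to your row condition.
\item It chooses $R^{\mathrm{reg}}\varphi_0,R^{\mathrm{reg}}\varphi_1\in\operatorname{span}\langle e_0,e_1\rangle$ dual to $R(-z)\varphi^j$. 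This is where your minor $aI_0^2$ enters.
\item It subtracts $\tfrac12\sum_j\eta^M(R^{\mathrm{reg}}(z)\varphi_i,R^{\mathrm{reg}}(-z)\varphi_j)\,R(z)\varphi^j$. This makes the new columns orthogonal under the twisted pairing without disturbing duality.
\item It determines $R^{\mathrm{reg}}\mathbf 1_{\mathrm{pt}}$ from four orthogonality conditions together with $\det R^{\mathrm{reg}}(z)=\det R(0)$. This linear system has determinant $\det\eta^X$, which gives regularity and the correct point pairing.
\end{enumerate}
These are the steps you still need to supply.
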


\begin{lemma}\label{lem:further-reg}
The first two rows of $R^{-1}$ are regular.  For
$e_0=\mathbf1_M$ and $e_1=(A^M-zD)e_0$, the $2\times2$ matrix
\[
\begin{pmatrix}
[R^{-1}e_0]_0 &[R^{-1}e_1]_0\\
[R^{-1}e_0]_1 &[R^{-1}e_1]_1
\end{pmatrix}
\]
has a regular inverse.
\end{lemma}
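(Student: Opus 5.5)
The plan is to use the relative QDE \eqref{eq:two-QDE} to turn every column of $R^{-1}$ into an iterated application of a first-order operator to the single column $R(z)^{-1}\mathbf 1_M$, whose first two coordinates are controlled by Condition~(3) of Definition~\ref{def:globallocal}.

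\textbf{From the relative QDE to an operator identity.} For any $z$-dependent vector $e\in\sH^M[[z]]$, the Leibniz rule $D(R^{-1}e)=(DR^{-1})e+R^{-1}De$ together with \eqref{eq:two-QDE} gives
\[
R^{-1}(A^M-zD)e=(A^{\mathrm{loc}}-zD)R^{-1}e .
\]
Set $e_0=\mathbf 1_M$ and $e_{k+1}=(A^M-zD)e_k$. Then $R^{-1}e_k=(A^{\mathrm{loc}}-zD)^kR^{-1}\mathbf 1_M$. Expanding, $e_k=(A^M)^k\mathbf 1_M+z(\cdots)$, where the correction is a polynomial in $z$ built from $D$-derivatives of entries of $A^M$. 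By Condition~(1) these entries are regular, and $\mathscr R$ is closed under $D$, so the correction is regular.

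\textbf{Reduction to the first two coordinates.} The matrix with columns $e_0,\dots,e_4$ equals the matrix with columns $(A^M)^j\mathbf 1_M$ plus $z$ times a regular matrix. Condition~(2) says the latter matrix has a regular inverse, so the former is invertible over regular power series in $z$, with regular inverse. Hence every basis vector of $\sH^M$ is a regular $\mathscr R[[z]]$-combination of the $e_k$. It therefore suffices to show that $[R^{-1}e_k]_i$ is regular for $i=0,1$ and all $k\le 4$.

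\textbf{The triangular structure of $A^X$.} Quantum multiplication by a degree-one class plus the $D$-derivative of the Dijkgraaf--Witten shift does not lower cohomological degree, apart from a scalar multiple $a\,\mathrm{id}$ of the identity; for the quintic, $a=-120q$. Consequently, for any $w$,
\[
[(A^{\mathrm{loc}}-zD)w]_0=(a-zD)[w]_0,\qquad
[(A^{\mathrm{loc}}-zD)w]_1=c\,[w]_0+(a-zD)[w]_1,
\]
with $c=\eta^X(\varphi^1,A^X\varphi_0)$. Here $c$ is regular by Condition~(2). The scalar $a$ is the $\varphi_0$-coefficient of $D\tau$; for the quintic it is a polynomial in $q$, hence regular. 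In general I expect to extract regularity of $a$ from the same formula or from the master data, and this is the one point I would check carefully.

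\textbf{First assertion.} The coordinates $r_0=[R^{-1}\mathbf 1_M]_0$ and $r_1=[R^{-1}\mathbf 1_M]_1$ are regular by Condition~(3). Induction on $k$ using the two displayed formulas, together with differential closure, shows that $[R^{-1}e_k]_{0,1}$ are regular. Combined with the reduction above, this proves that the first two rows of $R^{-1}$ are regular.

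\textbf{Second assertion.} The $2\times2$ matrix is
\[
\begin{pmatrix} r_0 & (a-zD)r_0\\ r_1 & c\,r_0+(a-zD)r_1\end{pmatrix}.
\]
Its determinant is
\[
c\,r_0^2-z\,(r_0\,Dr_1-r_1\,Dr_0).
\]
At $z=0$ this equals $c\,I_0^2$, since $r_0|_{z=0}=I_0$ by Condition~(3). Both $c$ and $I_0$ have regular inverses: $c^{-1}$ by Condition~(2), and $\iota(I_0)=\omega_0$ with $\omega_0(0)\neq0$. The determinant is a power series in $z$ with regular coefficients and a constant term that is regularly invertible. Its inverse, computed coefficientwise in $z$, is therefore regular, and so the adjugate formula gives a regular inverse for the $2\times2$ matrix.
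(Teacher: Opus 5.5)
Your proposal is correct and is essentially the paper's proof. It uses the same ingredients: the basis $e_{k+1}=(A^M-zD)e_k$, the relative QDE together with lower-triangularity of $A^X$ to express the first two coordinates of $R^{-1}e_k$ through $[R^{-1}\mathbf 1_M]_0$ and $[R^{-1}\mathbf 1_M]_1$, and inversion of the $2\times2$ matrix from its $z=0$ determinant $c\,\omega_0^2$. The paper also assumes without comment the point you flag, namely regularity of $a=[A^X]_{00}$ (for the quintic, $a=-120q$). In general it follows because the $z^0$ part of the relative QDE gives $A^M=R(0)A^{\mathrm{loc}}R(0)^{-1}$, so $a=\tfrac14\bigl(\operatorname{tr}A^M-A^{\mathrm{pt}}\bigr)$ is regular by Condition~(1).
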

\begin{proof}
We construct a regular basis $\{e_j\}_{j=0}^4$ by letting $e_{j+1}=(A^M-zD)e_j$, $1\le j<4$.
They form a regular basis because at $z=0$, matrix $[e_0,e_1,...,e_4]$ is the
invertible matrix in Definition~\ref{def:globallocal}. Set
\begin{equation}\label{eq:original-low-unit}
\mathcal R_i=[R^{-1}\mathbf1_M]_i,\qquad i=0,1,\mathrm{pt}.
\end{equation}
The  QDE\footnote{In all cases considered in this paper, $A^X$ is quantum product matrix and is lower triangular with equal diagonal entries. We use this property implicitly. Some arguments also hold under weaker assumptions.} gives
\[
\binom{[R^{-1}e_j]_0}{[R^{-1}e_j]_1}
=\left[\begin{pmatrix}[A^X]_{00}&0\\{}[A^X]_{10}&[A^X]_{00}\end{pmatrix}-zD\right]^j
 \binom{\mathcal R_0}{\mathcal R_1},\qquad 0\le j\le4.
\]
The right-hand side is regular by Definition~\ref{def:globallocal},
so the first two rows are regular.  At $z=0$, the $2\times2$
matrix in the statement has determinant
$[A^X]_{10}\mathcal R_0(0)^2=[A^X]_{10}\omega_0^2$,
its inverse is a product of regular
generators in $\mathscr R^{\mathrm{coni}}$.
Formal inversion in $z$ therefore gives a regular inverse. 
\end{proof}

\begin{proof}[Proof of Proposition~\ref{prop:decompose-R}]
We construct a regular symplectic $R^{\mathrm{reg}}$ satisfying
\[
R^{\mathrm{reg}}(z)\varphi^i=R(z)\varphi^i,\qquad i=0,1,
\]
which will give \eqref{eq:first2row}.

\smallskip
\noindent{\bf Step 1.} \emph{Define $R^{\mathrm{reg}}$ on $\sH^{\geq2}$.}
Set
\[
R^{\mathrm{reg}}(z)\varphi_2=R(z)\varphi_2,
\qquad
R^{\mathrm{reg}}(z)\varphi_3=R(z)\varphi_3.
\]
By symplecticity of $R$,
\[
\eta^M(v,R^{\mathrm{reg}}(-z)\varphi^i)
=[R(z)^{-1}v]_i,\qquad i=0,1.
\]
The right-hand side is regular by Lemma~\ref{lem:further-reg}, so
these columns of $R^{\mathrm{reg}}$ are regular.  They also satisfy
\[
\eta^M(R^{\mathrm{reg}}(z)\varphi^i,
       R^{\mathrm{reg}}(-z)\varphi^j)
=\eta^X(\varphi^i,\varphi^j)=0,\qquad i,j=0,1,
\]
by the property of
$\eta^X$.

\smallskip
\noindent{\bf Step 2.} \emph{Define $R^{\mathrm{reg}}$ on $\sH^{\leq1}$.}
By symplectic condition,
for $i,j=0,1$,
\[
\begin{aligned}
&\eta^M(R^{\mathrm{reg}}(z)\varphi_i,
       R(-z)\varphi^j)\, =\,\delta_{ij},\\
&\eta^M(R^{\mathrm{reg}}(z)\varphi_i,
       R^{\mathrm{reg}}(-z)\varphi_j)=0.
\end{aligned}
\]
First choose $R^{\mathrm{reg}}(z)\varphi_0$ and
$R^{\mathrm{reg}}(z)\varphi_1$ in
$\operatorname{span}\langle e_0,e_1\rangle$
to satisfy the first equation.
By Lemma~\ref{lem:further-reg}, 
there exists a unique such choice which is regular.

Next, we adjust the $R^{\mathrm{reg}}(z)\varphi_0$ and
$R^{\mathrm{reg}}(z)\varphi_1$ to satisfy the second equation.
Adding combinations of $R(z)\varphi^0,R(z)\varphi^1$ to the left component
preserves the first equation, since these vectors pair to zero.
For $i=0,1$, do the following adjustment,
\[
R^{\mathrm{reg}}(z)\varphi_i
\ \longmapsto\ 
R^{\mathrm{reg}}(z)\varphi_i
-\frac12\sum_{j=0}^1
\eta^M(R^{\mathrm{reg}}(z)\varphi_i,
       R^{\mathrm{reg}}(-z)\varphi_j)R(z)\varphi^j.
\]
By the first equation, each adjustment term subtracts half of the original
pairing in the second equation.
The two adjustment terms have zero pairing with each other
by Step~1, so the pairing after adjustment is zero.  The first equation
is unchanged, and each of $\{R^{\mathrm{reg}}(z)\varphi_i\}_{i=0}^3$ remains regular because
the adjustment terms are regular.

\smallskip
\noindent{\bf Step 3.} \emph{Define $R^{\mathrm{reg}}\mathbf{1}_\pt$.}
The following equations determine $R^{\mathrm{reg}}\mathbf{1}_\pt$,
\[
\begin{aligned}
\eta^M(R^{\mathrm{reg}}(z)\mathbf1_{\mathrm{pt}},
       R^{\mathrm{reg}}(-z)\varphi_i)&=0,\qquad i=0,1,2,3,\\
\det R^{\mathrm{reg}}(z)&=\det R(0).
\end{aligned}
\]
They give 5 equations linear in $R^{\mathrm{reg}}\mathbf{1}_\pt$,
whose 5 entries are independent
variables. Let $\mathsf K_{\mathrm{pt}}(z)$ be this $5\times5$ coefficient matrix, then
\[
\mathsf K_{\mathrm{pt}}(z)\,R^{\mathrm{reg}}(z)
=\begin{pmatrix}
\eta^X&*\\
0&\det R^{\mathrm{reg}}(z)
\end{pmatrix}.
\]
Taking determinants 
gives
\[
\det\mathsf K_{\mathrm{pt}}(z)=\det\eta^X,
\]
hence the unique solution is regular.
Taking the determinant of
$(R^{\mathrm{reg}}(z))^{\mathsf T}\eta^M R^{\mathrm{reg}}(-z)$
and using the normalization gives the desired point pairing.


Since the resulting
$R^{\mathrm{reg}}$ is regular and symplectic, its inverse is
regular as well.  
Finally, \eqref{eq:first2row} is given by 
\[
\begin{aligned}
[(R^{\mathrm{prin}})^{-1}v]_i
&=[R^{-1}R^{\mathrm{reg}}v]_i=\eta^M(R^{\mathrm{reg}}(z)v,R(-z)\varphi^i)\\
&=\eta^M(R^{\mathrm{reg}}(z)v,R^{\mathrm{reg}}(-z)\varphi^i)
=[v]_i, \quad i=0,1.
\end{aligned}
\]

\end{proof}

\subsection{Properties of $R^{\text{prin}}$}
Define the point-point R-matrix entry by
\begin{equation}\label{eq:general-point-data}
 r_\pt(z)=[(R^{\mathrm{prin}}(z))^{-1}]_{\mathrm{pt},\mathrm{pt}}.
\end{equation}
By symplecticity, under the  basis $(\varphi_0,\ldots,\varphi_3,\mathbf1_{\mathrm{pt}})$,

\begin{equation}\label{eq:residual-matrix-display}
 R^{\text{prin}}(z)^{-1}=
 \begin{pmatrix}
 I_{2\times2}&0_{2\times2}&0_{2\times1}\\
 *_{2\times2}&I_{2\times2}&*_{2\times1}\\
 *_{1\times2}&0_{1\times2}&r_\pt
 \end{pmatrix}.
\end{equation}
from which we can see that, the matrix form of the edge bivector is
\begin{equation}\label{eq:edge-matrix-display}
\begin{aligned}
 V_{R^{\text{prin}}}(z,w)
  &=\frac{(\eta^{\mathrm{loc}})^{-1}
-R^{\mathrm{prin}}(z)^{-1}(\eta^{\mathrm{loc}})^{-1}
\bigl(R^{\mathrm{prin}}(w)^{-1}\bigr)^{\mathsf T}}{z+w}\\
  &=\begin{pmatrix}
0_{2\times2}&0_{2\times2}&0_{2\times1}\\
0_{2\times2}&*_{2\times2}&*_{2\times1}\\
0_{1\times2}&*_{1\times2}&[V_{R^{\mathrm{prin}}}(z,w)]_{\pt,\pt}
\end{pmatrix}.
\end{aligned}
\end{equation}
Thus we have 
\begin{proposition}\label{prop:V-prin}
$[V_{R^{\mathrm{prin}}}(z,w)]_{ij}$ vanishes
if $i$ or $j$ is in $\{0,1\}$.
\end{proposition}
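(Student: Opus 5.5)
The claim follows from the block shape \eqref{eq:residual-matrix-display} of $(R^{\mathrm{prin}})^{-1}$, together with the block-diagonal pairing $\eta^{\mathrm{loc}}$. So the first task is to justify that shape, and then to read off the edge bivector.

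First I would establish \eqref{eq:residual-matrix-display}. The first two rows of $(R^{\mathrm{prin}})^{-1}$ are the rows $(I_{2\times2},0,0)$ by \eqref{eq:first2row}, since $[(R^{\mathrm{prin}})^{-1}v]_i=[v]_i$ for $i=0,1$. Next I would use the symplectic identity $R^{\mathrm{prin}}(-z)^{\mathsf T}\eta^{\mathrm{loc}}R^{\mathrm{prin}}(z)=\eta^{\mathrm{loc}}$, equivalently $(R^{\mathrm{prin}}(z))^{-1}=(\eta^{\mathrm{loc}})^{-1}R^{\mathrm{prin}}(-z)^{\mathsf T}\eta^{\mathrm{loc}}$. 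With $\eta^X$ antidiagonal (pairing $\varphi_i$ with $\varphi_{3-i}$) and $\eta^{\mathrm{pt}}$ a scalar, the rows $i=0,1$ of $(R^{\mathrm{prin}})^{-1}$ correspond to the columns $\varphi_3,\varphi_2$ of $R^{\mathrm{prin}}(-z)$, transposed. Applying the same identity to $R^{\mathrm{prin}}$ itself, those columns of $R^{\mathrm{prin}}$ equal the columns $\varphi_2,\varphi_3$ of $(R^{\mathrm{prin}})^{-1}$, and these are $(0,I,0)^{\mathsf T}$. This gives the middle $2\times2$ identity block and the zero $0_{1\times2}$ block in the point row.

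With the shape fixed, I would compute $M(z,w):=(R^{\mathrm{prin}}(z))^{-1}(\eta^{\mathrm{loc}})^{-1}(R^{\mathrm{prin}}(w)^{-1})^{\mathsf T}$ block by block. I would write $(\eta^{\mathrm{loc}})^{-1}$ in blocks adapted to $\sH^{\le1}\oplus\sH^{\ge2}\oplus\mathbb C\mathbf1_{\mathrm{pt}}$. Its $(\le1,\le1)$ and $(\ge2,\ge2)$ blocks vanish, the off-diagonal blocks $E,E^{\mathsf T}$ come from $\eta^X$, and it has a scalar in the point slot.

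The rows of $M$ indexed by $i\in\{0,1\}$ are then $(I,0,0)(\eta^{\mathrm{loc}})^{-1}(R^{\mathrm{prin}}(w)^{-1})^{\mathsf T}=(0,E,0)(R^{\mathrm{prin}}(w)^{-1})^{\mathsf T}$. This only sees rows $\varphi_2,\varphi_3$ of $(R^{\mathrm{prin}}(w))^{-1}$ through their $\sH^{\ge2}$ columns. So it picks out the $\sH^{\le1}$-columns of $(R^{\mathrm{prin}}(w)^{-1})^{\mathsf T}$ restricted to the middle block, which are the rows $0,1$ of $(R^{\mathrm{prin}}(w))^{-1}$. These rows are $(I,0,0)$, and therefore these rows of $M$ equal $(0,E,0)$ paired against $(I,0,0)^{\mathsf T}$. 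I would check this index bookkeeping carefully: the result should be exactly the corresponding rows of $(\eta^{\mathrm{loc}})^{-1}$. The numerator of $V_{R^{\mathrm{prin}}}$ then vanishes in rows $0,1$. By the symmetry $V(z,w)^{\mathsf T}=V(w,z)$, or by the same computation on the right, columns $0,1$ vanish too.

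The main obstacle is only bookkeeping: getting the antidiagonal form of $\eta^X$ right, so that the identity blocks of the first two rows transpose into the correct columns. A cleaner way to organize the whole argument is to use pairings directly. For $i\in\{0,1\}$ one has $[V(z,w)]_{ij}=$ the $\eta^{\mathrm{loc}}$-pairing of $\varphi^i$ against the corresponding vector. Using $(R^{\mathrm{prin}}(z))^{-\mathsf T}$-adjointness, $\eta^{\mathrm{loc}}((R^{\mathrm{prin}})^{-1}u,\varphi^i)=[(R^{\mathrm{prin}})^{-1}u]_i=[u]_i$ by \eqref{eq:first2row}, so the two terms in the numerator coincide for these indices. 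I would present this coordinate-free version as the main proof and the block matrices as a check.
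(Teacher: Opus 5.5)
Your route is the paper's: \eqref{eq:first2row} together with symplecticity gives the block form \eqref{eq:residual-matrix-display}, and \eqref{eq:edge-matrix-display} is then read off from it. Your derivation that $R^{\mathrm{prin}}$ fixes $\varphi_2,\varphi_3$ is correct.

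The final bookkeeping needs one fix. For $i\in\{0,1\}$, applying \eqref{eq:first2row} to the factor $R^{\mathrm{prin}}(z)^{-1}$ turns row $i$ of the numerator into the coordinate vector of $\varphi^i-R^{\mathrm{prin}}(w)^{-1}\varphi^i$, where $\varphi^i\in\sH^{\ge2}$. What closes the argument is therefore that the \emph{columns} $\varphi_2,\varphi_3$ of $R^{\mathrm{prin}}(w)^{-1}$ are $(0,I,0)^{\mathsf T}$, which is your symplectic step. Your block computation instead uses its rows $0,1$, which is the wrong block. Likewise, your coordinate-free version must cite this fact explicitly, because \eqref{eq:first2row} alone only controls the entries with $j\in\{0,1\}$.
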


The next proposition controls contributions from graphs with a point-point edge of $R^{\text{prin}}\bigl(T_R\Omega^{\mathrm{loc}}\bigr)$.

\begin{proposition}\label{prop:r-pt-reg}
{ $r_{\pt}(z)$ is symplectic and conifold regular.}
\end{proposition}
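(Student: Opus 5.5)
The plan is to read off both properties of $r_{\mathrm{pt}}$ from the block form \eqref{eq:residual-matrix-display}: symplecticity from the point–point entry of the symplectic identity for $(R^{\mathrm{prin}})^{-1}$, and regularity from a determinant identity. The regularity cannot come directly from the point row of $R^{-1}$, since $[R^{-1}\mathbf 1_M]_{\mathrm{pt}}$ contains powers of $Y=(1-5^5q/\ft)^{-1}$, which blow up at the conifold.

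\emph{Symplecticity.} Write $M(z)=R^{\mathrm{prin}}(z)^{-1}$. Since $R$ and $R^{\mathrm{reg}}$ are symplectic, so is $M$, i.e.
\[
M(z)^{\mathsf T}\eta^{\mathrm{loc}}M(-z)=\eta^{\mathrm{loc}}.
\]
Take the $(\mathrm{pt},\mathrm{pt})$ entry. By \eqref{eq:residual-matrix-display}, the $\mathrm{pt}$-column of $M(z)$ has zero $\varphi_0,\varphi_1$-components, possibly nonzero $\varphi_2,\varphi_3$-components, and $\mathrm{pt}$-component $r_{\mathrm{pt}}(z)$. The pairing $\eta^X$ couples $\varphi_2,\varphi_3$ only with $\varphi_1,\varphi_0$ (it has degree $3$), and $\eta^X$ and $\eta^{\mathrm{pt}}$ are orthogonal. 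Hence the $\varphi_2,\varphi_3$-components contribute nothing, and the entry reduces to
\[
r_{\mathrm{pt}}(z)\,\eta^{\mathrm{pt}}\,r_{\mathrm{pt}}(-z)=\eta^{\mathrm{pt}},
\]
that is, $r_{\mathrm{pt}}(z)r_{\mathrm{pt}}(-z)=1$. This is the symplectic condition for the $1\times1$ block.

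\emph{Regularity.} The matrix \eqref{eq:residual-matrix-display} is block lower triangular with identity diagonal blocks except the last, so
\[
r_{\mathrm{pt}}(z)=\det M(z)=\det R(z)^{-1}\det R^{\mathrm{reg}}(z)=\det R(0)\,\det R(z)^{-1},
\]
using the normalization $\det R^{\mathrm{reg}}(z)=\det R(0)$ from Step~3 of Proposition~\ref{prop:decompose-R}. Here determinants are taken in the fixed bases $(\varphi_0,\dots,\varphi_3,\mathbf 1_{\mathrm{pt}})$ and $(\mathbf 1_M,p,\dots,p^4)$, so they are well defined up to fixed constants.

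It therefore suffices to show that $\det R(z)$ does not depend on $q$.
\begin{itemize}
\item Rewrite the relative QDE \eqref{eq:two-QDE} as $zD R^{-1}=A^{\mathrm{loc}}R^{-1}-R^{-1}A^M$.
\item Multiply on the left by $R$ and take the trace (Jacobi's formula). This gives
\[
zD\log\det R(z)^{-1}=\operatorname{tr}A^{\mathrm{loc}}-\operatorname{tr}A^M .
\]
\item The left side is $O(z)$ as a power series in $z$, while the right side is independent of $z$. Hence both vanish: $\operatorname{tr}A^{\mathrm{loc}}=\operatorname{tr}A^M$ and $D\det R(z)=0$ coefficientwise in $z$.
\item Every $z$-coefficient of $\det R(z)$ therefore lies in $\ker D$. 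On $\mathscr R\subset\mathbb C((\Delta))$, with $D=(D\Delta)\partial_\Delta$ and $D\Delta\neq0$, this kernel consists of constants.
\item Constants are mapped by $\iota$ to constants, hence are regular. Also $\det R(0)$ is a nonzero constant.
\end{itemize}
So $r_{\mathrm{pt}}(z)=\det R(0)/\det R(z)$ is a power series in $z$ with constant coefficients, and in particular it is conifold regular.

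The main point needing care is the determinant step. One has to justify that $\det R(z)$ is a formal power series in $z$ whose coefficients lie in $\mathscr R$, so that Jacobi's formula applies termwise. One also has to check that the $z^0$ term of $\det R(z)$ is invertible, which follows from the symplectic condition at $z=0$. Finally, one should confirm that $\ker D$ on the relevant coefficient field is indeed $\mathbb C$ (or $\mathbb Q(\ft)$ before setting $\ft=1$), which holds because $D$ acts as a nonzero derivation in $q$.
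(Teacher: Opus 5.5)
Your proof is correct. The symplectic part matches what the paper leaves implicit, but you reach regularity by a different route from the paper.

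\emph{The paper's route.} It gauge-transforms $A^M$ by $R^{\mathrm{reg}}$ to obtain a regular connection $A^{\mathrm{reg}}$. It then writes the relative QDE for $(R^{\mathrm{prin}})^{-1}$ and uses the block shape together with $(A^{\mathrm{reg}}\mathbf 1_{\mathrm{pt}})|_{\sH^{\le1}}=0$. The $(\mathrm{pt},\mathrm{pt})$ entry then gives the scalar equation $zDr_{\mathrm{pt}}=(A^{\mathrm{pt}}-[A^{\mathrm{reg}}]_{\mathrm{pt},\mathrm{pt}})r_{\mathrm{pt}}$. From $r_{\mathrm{pt}}(0)=1$ it deduces that $D\log r_{\mathrm{pt}}$ is regular, and it integrates, using that $D\Delta$ is a unit so $D$ cannot remove a pole.

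\emph{Your route.} You use the block shape only to identify $r_{\mathrm{pt}}=\det(R^{\mathrm{prin}})^{-1}=\det R^{\mathrm{reg}}/\det R$. The matrix is not literally block lower triangular, but it becomes so after reordering the basis as $(\varphi_0,\varphi_1,\mathbf 1_{\mathrm{pt}},\varphi_2,\varphi_3)$, so your determinant claim stands. You then apply Jacobi's formula to the relative QDE for $R$ itself, using only that $A^M$ and $A^{\mathrm{loc}}$ are $z$-independent.

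\emph{What each route buys.} Yours needs less input: you never use the regularity of $A^{\mathrm{reg}}$, the vanishing of $(A^{\mathrm{reg}}\mathbf 1_{\mathrm{pt}})|_{\sH^{\le1}}$, or the integration step. It also proves more: with the Step~3 normalization, $r_{\mathrm{pt}}$ is independent of $q$. This forces $[A^{\mathrm{reg}}(z)]_{\mathrm{pt},\mathrm{pt}}=A^{\mathrm{pt}}$ identically in the paper's scalar ODE. Your argument does not really depend on that normalization either: $r_{\mathrm{pt}}=\det R^{\mathrm{reg}}(z)\cdot\det R(z)^{-1}$ is regular as soon as $R^{\mathrm{reg}}$ is. The paper's route stays within the gauge-theoretic framework used throughout the section, but it yields only regularity.

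Finally, your $(\mathrm{pt},\mathrm{pt})$-entry computation giving $r_{\mathrm{pt}}(z)r_{\mathrm{pt}}(-z)=1$ supplies the symplectic assertion. The paper's proof only invokes this under ``by symplecticity'' when it writes down the block form.
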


{\color{black}
\begin{proof}
Define $A\reg$ by corresponding QDE:
\begin{equation}\label{eq:general-regular-connection}
 A\reg=R^{\text{reg}}(z)^{-1}A^M R^{\text{reg}}-zR^{\text{reg}}(z)^{-1}DR^{\text{reg}}.
\end{equation}
where every coefficient in the right hand side is regular: $A^M$, $R^{\text{reg}}$, and its inverse
are regular, and $D$ preserves the regularity. Thus $A\reg$ is regular.

Then by $R(z)^{-1}=R^{\text{prin}}(z)^{-1}R^{\text{reg}}(z)^{-1}$ and
the original QDE for $R$, we have
\[
 -zDR^{\text{prin}}(z)^{-1}+A^\loc R^{\text{prin}}(z)^{-1}
                         =R^{\text{prin}}(z)^{-1}A\reg.
\]
By definition of $R^{\text{prin}}$ and lower triangularity of $A^X$, we have $(A\reg\mathbf1_{\mathrm{pt}})|_{\sH^{\le1}}=0$.

Applying this equation to $\mathbf1_{\mathrm{pt}}$ shows that  only the following term remains: 
\begin{equation}\label{eq:general-point-ODE}
zDr_{\mathrm{pt}}=
\bigl(A^{\mathrm{pt}}-[A^{\mathrm{reg}}]_{\mathrm{pt},\mathrm{pt}}\bigr)
r_{\mathrm{pt}}.
\end{equation}
In fact, ordinary matrix multiplication gives
\[
\begin{aligned}
\bigl[(R^{\mathrm{prin}}(z))^{-1}A^{\mathrm{reg}}\bigr]_{\mathrm{pt},\mathrm{pt}}
={}&\sum_{i=0}^{1}
\underbrace{[A^{\mathrm{reg}}]_{i,\mathrm{pt}}}_{=0}
[(R^{\mathrm{prin}}(z))^{-1}]_{\mathrm{pt},i}
\\
&+\sum_{j=2}^{3}
[A^{\mathrm{reg}}]_{j,\mathrm{pt}}
\underbrace{[(R^{\mathrm{prin}}(z))^{-1}]_{\mathrm{pt},j}}_{=0}
+r_{\mathrm{pt}}(z)[A^{\mathrm{reg}}]_{\mathrm{pt},\mathrm{pt}}.
\end{aligned}
\]

Proposition \ref{prop:decompose-R} determines $r_{\mathrm{pt}}(0)$ to be 1, so $\log r_{\mathrm{pt}}$ is a formal series in $z$.
Dividing \eqref{eq:general-point-ODE} by $r_{\mathrm{pt}}$ shows that
$D\log r_{\mathrm{pt}}$ is coefficientwise  regular in $z$.
Since $(D\Delta)(0)\ne0$, a pole in any coefficient of
$\log r_{\mathrm{pt}}$ would produce a pole after applying $D$.
Hence $\log r_{\mathrm{pt}}$, and therefore $r_{\mathrm{pt}}$, is  regular.
\end{proof}


}

\subsection{Proof of the main theorem}

\begin{lemma}\label{lem:reg-preserve}
  Let $R(z)$ be a conifold regular symplectic matrix.
  \begin{itemize}
    \item If $\Omega$ is conifold regular, then so is $R\Omega$;
    \item If $\Omega$ has conifold gap property with respect to $\omega$, then so is $R\Omega$.
  \end{itemize}
\end{lemma}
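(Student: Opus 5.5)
We expand $R\Omega$ via Givental's graph sum. For a symplectic $R(z)=\id+R_1z+\cdots$, the class $(R\Omega)_{g,n}$ is a finite sum over stable graphs $\Gamma$ of genus $g$ with $n$ legs. Each vertex $v$ carries $\Omega_{g_v,n_v}$ together with $\psi$-insertions, with the $\kappa$-terms coming from the expansion of $R^{-1}$ applied to the unit. Each leg carries $R^{-1}(\psi)$, and each edge carries the bivector $V_R(\psi',\psi'')=\bigl(\eta^{-1}-R^{-1}(\psi')\eta^{-1}R^{-1}(\psi'')^{\mathsf T}\bigr)/(\psi'+\psi'')$. We take $R\Omega$ to mean the unit-preserving action, meaning the $R$-action composed with its translation, as elsewhere in the paper. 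If the convention is instead the action without translation, the dilaton leaves are simply absent, and the argument is the same.

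Integrating over $\overline{\mathcal M}_{g,n}$ against $\prod\psi_i^{a_i}$ and pushing forward along the gluing maps turns each graph contribution into a finite sum. Each summand is a product of vertex correlators $\int_{\overline{\mathcal M}_{g_v,n_v+m_v}}\Omega_{g_v,n_v+m_v}(\ldots)\prod\psi^{b}$ multiplied by coefficients of $R^{-1}$, of $V_R$, and of the translation $T_R(z)=z(\mathbf 1-R^{-1}(z)\mathbf 1)$, the latter supplying the extra dilaton leaves. The total is finite for fixed $(g,n)$ because $T_R=O(z^2)$, so the translation leaves are stable. The key observation is that all these coefficients are entries of $R^{-1}$ and of $\eta^{-1}$, hence lie in $\mathscr R$ and are conifold regular by hypothesis. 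The map $\iota$ is a ring homomorphism, so regularity is preserved under the finite sums and products involved.

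For the first bullet, every vertex of every graph satisfies $2g_v-2+n_v>0$, and after the translation leaves are attached it has $n_v\ge1$ or $g_v\ge2$. Under the regularity hypothesis every vertex correlator is regular, including those with $n_v=0$. It follows that each graph term of $\iota\bigl(\int(R\Omega)_{g,n}\prod\psi^{a_i}\bigr)$ lies in $\mathbb C[[\Delta]]$, and so does the sum. The only point to check is that the dilaton leaves preserve stability, which holds because $T_R\in z^2\mathscr H[[z]]$.

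For the second bullet, the gap property grants regularity of every correlator with at least one insertion. A vertex can fail to be regular only if it has $n_v=0$, meaning no legs, no half-edges and no dilaton leaves. In a connected graph this forces $\Gamma$ to be the single vertex of genus $g$ with no edges, and it forces $n=0$. When $n\ge1$, every graph therefore contributes a regular term, so $R\Omega$ has regular correlators with $n\ge1$.

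When $n=0$, we split the graph sum into this isolated-vertex term and the rest. The isolated-vertex term is exactly $\int_{\overline{\mathcal M}_g}\Omega_{g,0}$, because a bare vertex carries no $R$ factors. By hypothesis it equals $\kappa^{1-g}\frac{(-1)^gB_{2g}}{2g(2g-2)}\omega^{2-2g}+O(\Delta^0)$. Every other graph has each vertex with $n_v\ge1$, since the graph is connected with at least one edge or at least one dilaton leaf, so those terms are regular.

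The main obstacle is keeping precise track of the translation leaves. One has to confirm that a genus-$g$ vertex with only dilaton leaves is covered by the regularity hypothesis, which holds because it has $n_v\ge1$. One must also confirm that no graph other than the bare vertex carries an unweighted $\Omega_{g_v,0}$. Once that is settled, both bullets follow, with the same constant $\kappa$ and the same $\omega$.
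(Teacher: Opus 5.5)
Your proposal is correct and follows the paper's own argument, which simply cites the graph sum for the $R$-matrix action together with Definition~\ref{def:conifold-reg-gap}. Your observation that the bare single vertex is the only graph carrying an unweighted $\Omega_{g,0}$ is exactly why the leading pole survives with the same $\kappa$ and $\omega$. One caution: the paper's $R\Omega$ is the untranslated action, since that is what gives $R^{\mathrm{reg}}(z)^{-1}\Omega^M=R^{\mathrm{prin}}T_R(\cdots)$ in the proof of Theorem~\ref{thm:main}, and your argument covers that case. For the translated convention you take as primary, however, your claim $T_R\in z^2\mathscr H[[z]]$ would require $R(0)$ to preserve the unit. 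That fails in this setting, because $[R^{-1}\mathbf 1_M]_0|_{z=0}=I_0$.
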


\begin{proof}
 It follows directly from the graph sum for
the R-matrix action and Definition
\ref{def:conifold-reg-gap}.
\end{proof}

\begin{lemma}\label{lem:CY3-vertex-elimination}
Let \(\Omega\) be a CohFT of Calabi--Yau threefold $X$ on
\(H^\bullet(X)\), \(\tau\in H^{\mathrm{even}}(X)\) and \(T(z)\in zH^{\mathrm{even}}(X)[[z]]\).  
Let \(T \Omega^\tau\) denote
the CohFT obtained by first applying the primary shift by $\tau$ and then the
\(T\)-action.  If \(2g-2+n>0\), \(n\geq1\), and
\(\gamma_j\in H^{2d_j}(X)\) are homogeneous with \(d_j>1\), then
\[
  (T \Omega^\tau)_{g,n}
   (\gamma_1,\ldots,\gamma_n)=0
 \qquad\text{in}\qquad
 H^\bullet(\overline{\mathcal M}_{g,n}).
\]
\end{lemma}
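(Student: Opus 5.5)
The plan is a pure degree count on $\overline{\mathcal M}_{g,n}$, using that for a Calabi--Yau threefold the GW CohFT is homogeneous and carries no degree shift. Recall that $\Omega_{g,n}(\gamma_1,\ldots,\gamma_n)=\pr_*\bigl(\prod_j\ev_j^*\gamma_j\cap[\overline{\mathcal M}_{g,n}(X,\beta)]^{\vir}\bigr)$, summed over $\beta$ with Novikov weights. Since $c_1(X)=0$ and $\dim X=3$, the virtual dimension is $(\dim X-3)(1-g)+\int_\beta c_1+n=n$, independently of $g$ and $\beta$. Hence for homogeneous $\gamma_j\in H^{2d_j}(X)$ the class $\Omega_{g,n}(\gamma_1,\dots,\gamma_n)$ is homogeneous of complex cohomological degree
\[
(3g-3+n)-\Bigl(n-\sum_j d_j\Bigr)=3g-3+\sum_j d_j .
\]
It therefore vanishes as soon as $\sum_j d_j>n$. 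The goal is to show that this inequality survives the primary shift and the $T$-action.

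\emph{Step 1 (primary shift).} Write $\tau=\tau_0\mathbf 1_X+\tau'$ with $\tau'\in H^{\ge2}(X)$. I would first dispose of $\tau_0$: by the flat-unit (string) property of the GW CohFT, $\Omega_{g,n+1}(\ldots,\mathbf 1_X)=\pi^*\Omega_{g,n}(\ldots)$ in the stable range. Pushing forward along the forgetful map then kills every term containing a unit insertion, so $\Omega^\tau=\Omega^{\tau'}$. For $\tau'$, the term
\[
\frac1{m!}\pi_*\Omega_{g,n+m}(\gamma_1,\ldots,\gamma_n,\tau',\ldots,\tau')
\]
with homogeneous pieces of $\tau'$ of degrees $e_i\ge1$ has degree $3g-3+\sum_j d_j+\sum_i(e_i-1)\ge 3g-3+\sum_j d_j$. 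So $\Omega^{\tau'}_{g,n'}$ evaluated on homogeneous inputs has degree at least $3g-3$ plus the sum of the input degrees. This lower bound is all I need; exact homogeneity is not required.

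\emph{Step 2 ($T$-action).} By definition,
\[
(T\Omega^\tau)_{g,n}(\gamma)=\sum_m\frac1{m!}\pi_*\Omega^\tau_{g,n+m}\bigl(\gamma_1,\ldots,\gamma_n,T(\psi_{n+1}),\ldots,T(\psi_{n+m})\bigr).
\]
Each insertion $T(\psi)$ is a sum of terms $\psi^{a}\phi$ with $a\ge1$ (because $T\in zH[[z]]$) and $\deg\phi=e\ge0$. By Step~1, each summand has degree at least
\[
3g-3+\sum_j d_j+\sum_i(a_i+e_i-1)\ \ge\ 3g-3+\sum_j d_j .
\]
This uses that the term $\psi^a$ is an additional class on $\overline{\mathcal M}_{g,n+m}$, which only raises degree. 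Since every $d_j\ge2$ and $n\ge1$, we have $\sum_j d_j\ge 2n>n$. So every summand has degree exceeding $3g-3+n=\dim\overline{\mathcal M}_{g,n}$ and vanishes, which gives the claim.

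\emph{Expected obstacle.} The point I expect to need the most care is Step~1, namely the claim that degree-$0$ shifts and unit insertions really contribute nothing. The exceptional unstable configurations, e.g. a unit insertion attached to $(g,n)=(0,2)$, must be checked to lie outside the range $2g-2+n>0$ after forgetting points. One must also confirm that the sums over $m$ are formally convergent: they are, in the formal parameters of $\tau$ and $T$ and in $q$. I would also note that the possibly infinite Novikov sum does not affect the argument, because the degree formula is independent of $\beta$. Odd classes play no role, since all insertions here are even.
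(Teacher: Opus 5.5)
Your proof is correct and is the argument the paper compresses into ``a standard result from the unit axiom and the virtual dimension constraint.'' You use the unit axiom to remove the $\mathbf 1_X$-component of $\tau$, and the Calabi--Yau threefold degree count $3g-3+\sum_j d_j+\sum_i(a_i+e_i-1)\ge 3g-3+2n>\dim\overline{\mathcal M}_{g,n}$ then kills every remaining term of the shifted and translated class.
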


\begin{proof}
    This is a standard result from the unit axiom and the virtual dimension constraint.
\end{proof}

\begin{proof}[Proof of Theorem \ref{thm:main}]
  Let $R(z) =  R^{\text{reg}}(z) \cdot R^{\text{prin}}(z)$ be the symplectic decomposition supplied by
  condition (ii) and Proposition \ref{prop:decompose-R}.
  We claim the following identity:
  \begin{equation}\label{eq:final}
    \int_{\Mbar_g}R^{\text{reg}}(z)^{-1}\Omega^M=\int_{\Mbar_g} R^{\text{prin}}(T_R|_{X}\Omega^{X,\tau}\oplus T_R|_{\pt}\Omega^{\pt})
    =I_0^{2-2g}F_g^X+ \int_{\Mbar_g} r_{\pt}^{-1}T_R|_{\pt}\Omega^{\pt}.
  \end{equation}
  The first equality is obtained by applying $R^{\text{reg}}(z)^{-1}$ to both sides
  of equation \ref{eq:master-decomp}. 
  The second follows from Proposition \ref{prop:V-prin}
  and Lemma \ref{lem:CY3-vertex-elimination}, together with dilaton equation and Condition (3)
  of Definition \ref{def:globallocal}.

  Now by condition (i) and (iii), Proposition \ref{prop:decompose-R} and \ref{prop:r-pt-reg}, Lemma \ref{lem:reg-preserve}, 
  after expansion at the conifold point we have 
   \[\iota \int_{\Mbar_g}R^{\text{reg}}(z)^{-1}\Omega^M=O(\Delta^0), \quad \iota \int_{\Mbar_g}  r_{\pt}^{-1}T_R|_{\pt}\Omega^{\pt}=\kappa^{1-g} \frac{(-1)^gB_{2g}}{2g(2g-2)} \omega_1^{2-2g}+O(\Delta^0), \]
  thus we conclude from \eqref{eq:final} that
  \[F_g^{X,c}=\kappa^{1-g}\frac{(-1)^{g-1}B_{2g}}{2g(2g-2)}\tau_c^{2-2g}+O(\tau_c^0).\]
\end{proof} 

\section{The conifold gap for cone-vertex  theories  } \label{sec:TKW}

To each formal coordinate
$\omega(\Delta)=\Delta+O(\Delta^2)$, we associate a translation
$T_\omega$, called the \emph{cone-Laplace translation},
acting on the CohFT of a point.
The terminology comes from quintic MSP theory, where the
corresponding translation encodes the contributions of rational
tails attached to the vertex $O$ of the cone $C_Q$.
Motivated by this geometric interpretation, we call
$T_\omega\Omega^{\mathrm{pt}}$ the \emph{cone-vertex theory}
associated with $\omega$.

For general $\omega$, we define $T_\omega$ by a formal Laplace
transform associated with  the Picard--Fuchs
equations for one-parameter families of Calabi--Yau threefolds.
The main result of this section is that
$T_\omega\Omega^{\mathrm{pt}}$ has the conifold gap property
with respect to $\omega$, with $\kappa=1$.


\subsection{The cone-Laplace transform}
\label{sec:laplace}
The Picard--Fuchs equation for the mirror of $C_Q$ can be viewed
as a cone extension of that for the mirror quintic.
This viewpoint motivates the formal construction below.
 
We consider the hypergeometric equation of Calabi-Yau type:
\begin{equation}\label{eq:general-PF}
 \left(\prod_{i=1}^n(D+\alpha_i)
       -\mathsf{q}\prod_{i=1}^n(D+\beta_i)\right)\omega=0,
 \qquad n\geq3,
\end{equation}
with $\Delta=1-\mathsf{q}$ and
$D=\mathsf{q}\partial_{\mathsf{q}}=-(1-\Delta)\partial_\Delta$, such that 
  \begin{equation}\label{eq:conifold-condition}
  n+\sum_i\alpha_i-\sum_i\beta_i=2.
  \end{equation}

The indicial roots at $\Delta=0$ are
$0,1,\ldots,n-2,n-1+\sum_i\alpha_i-\sum_i\beta_i$.
 Under condition~\eqref{eq:conifold-condition}, we find $1$ is the double root.
There is a unique solution $\omega_1(\Delta)$ such that \footnote{Equivalently, $\omega_{n-1}=\frac{1}{2\pi i}\omega_{1}\log \Delta+\omega^{\prime}_{n-1}$
and $\omega^{\prime}_{n-1}$ is regular at $\Delta=0$.
Actually $\omega_1(\Delta)$ is just the solution obtained
by the standard Frobenius method  for the double root 
1 without logarithms.}
\begin{enumerate}
    \item $\omega_1(\Delta)=\Delta+O(\Delta^2)$;
    \item $\omega_1(\Delta)$ can be extended to a basis of solution space
    $\{\omega_0,\omega_1,...,\omega_{n-1}\}$ such that the monodromy matrix $\mathcal T$
    is given by $\mathcal T(\omega_j)=\omega_j$ for $1\leq j\leq n-2$
    and $\mathcal T(\omega_{n-1})=\omega_{n-1}+\omega_1$.
\end{enumerate}

Inspired by \cite[section 6]{NMSP2}, we study asymptotic solutions to the following {cone extended} Picard--Fuchs  equation
\begin{equation}\label{eq:general-extended}
  \left(\prod_{i=1}^n(D_p+\alpha_i z)(D_p+1)
        -\mathsf{q}\prod_{i=1}^n(D_p+\beta_i z)\right)\mathcal I=0,
  \qquad D_p=zD-1.
\end{equation}

\begin{propositiondefinition}\label{defn:conifoldlaplace}
\label{thm:transfer}
Let
\( \Phi(x)=x-\log(1+x)\).
For every \(\omega(\Delta)\in\Delta\mathbb C[[\Delta]]\), set
\[
  A_\omega(x):=\frac{\omega\!\left(\frac{x}{1+x}\right)}{1+x}
\]
and, for $k\geq0$, define
\begin{equation}\label{eq:Rcoeff}
  \cR_k[\omega](\Delta)
  :=
  \sum_{r=0}^k
  \frac{(k+r+1)!}{r!}\,
  \Delta^{-(k+r+2)}
  [x^{k+r+1}]
  \bigl(A_\omega(x)\Phi(x)^r\bigr).
\end{equation}
The  \emph{cone-Laplace transform} of $\omega$ and the associated  \emph{cone-Laplace translation} are defined by
\[
\mathcal R_\omega(\Delta,z)
:=
\sum_{k\geq0}(-1)^k\mathcal R_k[\omega](\Delta)z^k,
\qquad
T_\omega(z):=z\bigl(1-\mathcal R_\omega(\Delta,z)\bigr).
\]
They have the following properties.\begin{itemize}
    \item If $\omega(\Delta)=\Delta+O(\Delta^2)$, then
\begin{equation}\label{eq:extended-normalization}
  \cR_0[\omega]=\Delta^{-2},
  \qquad
  \Delta^2\cR_k[\omega]\longrightarrow0
  \quad(\Delta\to\infty,\ k>0).
\end{equation}
\item  Suppose that   \eqref{eq:conifold-condition} is satisfied, and let $\omega_1$ be the normalized vanishing period of \eqref{eq:general-PF} as above. Then
\begin{equation}\label{eq:normalized-extended-solution}
  \mathcal I_{\omega_1}
  :=
  ze^{\mathsf q/z}
  \sum_{k\geq0}\cR_k[\omega_1](\Delta)z^k
\end{equation}
solves \eqref{eq:general-extended} and  is the unique formal
solution of this form satisfying
\eqref{eq:extended-normalization}.
\end{itemize}
 
\end{propositiondefinition}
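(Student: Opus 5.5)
The plan is to realize $\mathcal I_{\omega}$ as the asymptotic expansion of a formal Laplace-type integral of the period $\omega$. The two bulleted properties are then treated separately: the normalization \eqref{eq:extended-normalization} by direct inspection of \eqref{eq:Rcoeff}, and the differential equation together with uniqueness by an intertwining argument and a recursion in $z$.

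\emph{Normalization.} I would first dispose of \eqref{eq:extended-normalization}, which is elementary. Since $\omega=\Delta+O(\Delta^2)$, we have $\omega\bigl(\tfrac{x}{1+x}\bigr)=x+O(x^2)$, hence $A_\omega(x)=x+O(x^2)$. Moreover $\Phi(x)=x^2/2+O(x^3)$, so $A_\omega\Phi^r=O(x^{2r+1})$. For $k=0$, the coefficient $[x^{r+1}](A_\omega\Phi^r)$ vanishes unless $r=0$, which leaves $\mathcal R_0[\omega]=1!\,\Delta^{-2}[x^1]A_\omega=\Delta^{-2}$. For $k>0$, every coefficient $[x^{k+r+1}](A_\omega\Phi^r)$ is a constant independent of $\Delta$. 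Hence $\Delta^2\mathcal R_k[\omega]$ is a finite sum of constant multiples of $\Delta^{-(k+r)}$ with $k+r\ge1$, and it tends to $0$ as $\Delta\to\infty$.

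\emph{The differential equation.} The structure of \eqref{eq:Rcoeff} is exactly Watson's lemma for an integral of the form
\[
\mathcal I(\Delta,z)\;\approx\; e^{\mathsf q/z}\int_0^\infty e^{-\Delta\Phi(x)/z}\,A_\omega(x)\,\frac{\Delta\,dx}{z}\cdot(\text{normalizing factor}).
\]
Indeed, expanding $e^{-\Delta\Phi/z}$ in powers of $\Phi$ and integrating monomials $x^{m}$ against a linear exponential produces precisely the factorials $(k+r+1)!/r!$ and the powers $\Delta^{-(k+r+2)}$. Equivalently, after the substitution $\mathsf q'=\mathsf q/(1+x)$ (so that $\Delta'=x/(1+x)$ when $\mathsf q=1$), it is a Laplace transform of $\omega(\Delta')$ in the variable $\mathsf q'$. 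I would therefore work in a purely formal setting: define the term-by-term transform $\mathcal L$ sending $\omega$ to $\sum_k\mathcal R_k[\omega]z^k$, and prove two operator identities by integration by parts, checked at the level of coefficients $[x^m]$. These identities are:
\begin{itemize}
\item[(a)] multiplication by $\mathsf q$ on the $\omega$ side corresponds to a shift of the form $D_p\mapsto D_p+z$;
\item[(b)] $D$ on the $\omega$ side corresponds to $D_p$ on the $\mathcal I$ side, modulo the factor $(D_p+1)$.
\end{itemize}
Identity (b) is where the extra cone factor in \eqref{eq:general-extended} arises. Applying $\mathcal L$ to \eqref{eq:general-PF} then yields \eqref{eq:general-extended} for $\mathcal I_{\omega_1}$. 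I would verify this in the cleanest form, namely as the identity $\mathcal L\circ P=P_{\mathrm{ext}}\circ\mathcal L$ on formal power series $\omega$ satisfying $\omega(0)=0$. Condition \eqref{eq:conifold-condition} enters because it makes the boundary terms at $x=0$ cancel. This is also the only place where $\omega=\omega_1$ (vanishing at $\Delta=0$ with the double indicial root $1$) is genuinely used, as opposed to an arbitrary $\omega$.

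\emph{Uniqueness.} I would substitute the ansatz $ze^{\mathsf q/z}\sum_k \mathcal R_k z^k$ into \eqref{eq:general-extended}. Conjugating by $e^{\mathsf q/z}$ turns $zD$ into $zD+\mathsf q$, and the lowest power of $z$ gives an algebraic or first-order equation in $\Delta$. Higher powers give a recursion of the form $L_0\mathcal R_k=F(\mathcal R_0,\dots,\mathcal R_{k-1})$, where $L_0$ is a first-order operator in $\Delta$ whose kernel has the form $c\,\Delta^{-2}\cdot(\text{unit})$. The normalization $\mathcal R_0=\Delta^{-2}$ fixes the $k=0$ term. For $k>0$, the decay condition $\Delta^2\mathcal R_k\to0$ excludes adding any nonzero kernel element, since such an element behaves like $c$ times a nonzero constant after multiplication by $\Delta^2$.

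\emph{Main obstacle.} I expect the hardest step to be the intertwining identity (b) in the formal setting. Tracking the boundary terms and the precise role of \eqref{eq:conifold-condition} requires care with the combinatorics of $[x^{k+r+1}](A_\omega\Phi^r)$. My first attempt would be to verify it on the monomial basis $\omega=\Delta^m$, where $A_\omega=x^m(1+x)^{-m-1}$ makes every coefficient explicit. If that becomes unwieldy, I would instead prove directly that the explicit $\mathcal R_k$ satisfy the recursion from the uniqueness step, using the PF equation only through $\omega_1$'s power series recursion.
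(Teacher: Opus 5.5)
\textbf{Where the proposal agrees with the paper.} Your normalization argument is correct. Your uniqueness recursion is correct and is the paper's. After conjugating by $e^{\mathsf q/z}$ the $z^0$ part vanishes, and $\mathrm{PF}_1=(-1)^{n-1}\Delta^{n-1}(1-\Delta)(\Delta\partial_\Delta+2)$, where the $+2$ comes from \eqref{eq:conifold-condition}.

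\textbf{The gap: boundary terms in the existence step.} First a correction: \eqref{eq:Rcoeff} is the Watson expansion of the half-line integral
\[
\mathcal H_0[u]=\frac1z\int_0^\infty e^{(\mathsf q-\Delta x+\Phi(x))/z}\,u\,\frac{dx}{1+x},\qquad u=\omega\Big(\frac{x}{1+x}\Big).
\]
The exponent is $\mathsf q(1+x)-\log(1+x)=\mathsf q-\Delta x+\Phi(x)$, not $-\Delta\Phi(x)$, and there is no extra factor $\Delta$.

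Integrating by parts on $[0,\infty)$ gives
\[
(D_p+\eta z)\mathcal H_0[u]=-z\,\mathcal H_0[((1+x)\partial_x-\eta)u]-e^{\mathsf q/z}u(0).
\]
Iterating once more already produces the boundary term $z\,e^{\mathsf q/z}\partial_xu(0)$, even when $u(0)=0$. So the identity $\mathcal L\circ P=P_{\mathrm{ext}}\circ\mathcal L$ on all $\omega$ with $\omega(0)=0$ is false. Since $\partial_xu(0)=1$ for $u=\omega_1(x/(1+x))$, the boundary terms are not termwise zero for $\omega_1$ either.

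Nor can they cancel for a reason that uses only \eqref{eq:conifold-condition}, $\mathrm{PF}\,\omega=0$ and $\omega=\Delta+O(\Delta^2)$. For $n\ge4$ the local exponents are $0,1,1,2,\dots,n-2$, and the monodromy-invariant solutions form an $(n-1)$-dimensional space of analytic functions. So there is an analytic solution $\omega_2=\Delta^2+O(\Delta^3)$, and every $\omega_1+c\,\omega_2$ has all three properties. But $\cR_0[\omega_2]=0$ and $\cR_1[\omega_2]=2\Delta^{-3}\notin\mathbb C\Delta^{-2}=\ker\mathrm{PF}_1$. Hence, by your own recursion, the transform of $\omega_2$ is not a solution, and by linearity at most one value of $c$ can work. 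For the same reason, your fallback (checking the recursion using only the power-series recursion of $\omega_1$) cannot succeed.

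\textbf{The missing input.} What you never use is the monodromy normalization of $\omega_1$: it is the jump of a logarithmic solution $\omega_{\log}=\frac1{2\pi i}\omega_1\log\Delta+\omega'$ with $\omega'$ analytic. The paper uses this by integrating $\omega_{\log}$, not $\omega_1$, over a keyhole contour $\Gamma$ around the cut $[0,\infty)$ with the same kernel.

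Because $\Gamma$ never touches $x=0$, the identities
\[
(D_p+\eta z)\mathcal H[u]=-z\,\mathcal H[((1+x)\partial_x-\eta)u],\qquad (D_p+1)\mathcal H[u]=\mathsf q\,\mathcal H[(1+x)u]
\]
hold with no boundary terms. Hence $\mathcal H[\omega_{\log}]$ solves \eqref{eq:general-extended} exactly. Collapsing $\Gamma$ onto the cut turns it into $\mathcal H_0[\omega_1]$; the small circle contributes nothing because $\omega_{\log}$ is bounded near $0$. The expansion of $\mathcal H_0[\omega_1]$ is \eqref{eq:normalized-extended-solution}.

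In your formal language: the sum of all boundary terms at $x=0$ equals, up to normalization, a residue at $x=0$ of the kernel against $(1+x)\,\mathrm{PF}(\omega_1\log\Delta)$. It vanishes precisely because $\mathrm{PF}(\omega_1\log\Delta)=-2\pi i\,\mathrm{PF}(\omega')$ is analytic there.

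\textbf{Your (a) and (b).} In the identities above, the cone factor $(D_p+1)$ comes from multiplication by $1+x$, which is the inverse of the period-side variable $1/(1+x)$. The period-side $D=-(1+x)\partial_x$ corresponds to $z^{-1}D_p$ up to the $\eta$-shifts. Your (a) and (b) have these roles mixed up.
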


\begin{proof}
The normalization \eqref{eq:extended-normalization} follows immediately
from \eqref{eq:Rcoeff}: if
\(\omega(\Delta)=\Delta+O(\Delta^2)\), then
\(A_\omega(x)=x+O(x^2)\), while for \(k>0\) every summand of
\(\cR_k[\omega]\) contains a power \(\Delta^{-k-r-2}\).

We now specialize to the normalized vanishing period \(\omega_1\).
Regard the local solutions of \eqref{eq:general-PF} as functions of
\(x\) through the substitution
\[
  \Delta=\frac{x}{1+x},
\]
and let \(\omega_{\log}\) be a logarithmic solution whose monodromy
jump is \(\omega_1\).  Consider
\[
  \mathcal H[u]
  :=
  \frac1z\int_\Gamma
  \exp\left(
    \frac{\mathsf q(1+x)-\log(1+x)}{z}
  \right)
  u\,\frac{dx}{1+x}.
\]
Here \(\Gamma\) is a keyhole contour around the cut
\([0,\infty)\), oriented so that
\[
  \omega_{\log,+}-\omega_{\log,-}=\omega_1.
\]

Integration by parts gives, for every scalar \(\eta\),
\begin{equation}\label{eq:general-intertwining}
  (D_p+\eta z)\mathcal H[u]
  =-z\mathcal H[((1+x)\partial_x-\eta)u],
  \qquad
  (D_p+1)\mathcal H[u]
  =\mathsf q\,\mathcal H[(1+x)u].
\end{equation}
Hence \(\mathcal H[\omega_{\log}]\) satisfies
\eqref{eq:general-extended}.

Collapsing \(\Gamma\) onto the cut leaves only the monodromy jump
\(\omega_1\).  Using
\[
  \mathsf q(1+x)-\log(1+x)
  =\mathsf q-\Delta x+\Phi(x),
\]
we obtain the asymptotic expansion
\[
  \mathcal H[\omega_{\log}]
  \sim
  ze^{\mathsf q/z}
  \sum_{k\geq0}\cR_k[\omega_1](\Delta)z^k
  =
  ze^{\mathsf q/z}\cR_{\omega_1}(\Delta,-z),
  \qquad z\to0,
\]
where the coefficients are given by \eqref{eq:Rcoeff}.

For uniqueness, conjugating \eqref{eq:general-extended} by
$e^{\mathsf{q}/z}$ gives
\[
 \mathrm{PF}(z)
 =
 \prod_i(-\Delta+zD+\alpha_i z)(1-\Delta+zD)
 -(1-\Delta)\prod_i(-\Delta+zD+\beta_i z)
 =\sum_{j\geq1}z^j\mathrm{PF}_j.
\]
Using $[D,\Delta]=-(1-\Delta)$ and \eqref{eq:conifold-condition},
\(\mathrm{PF}_1=
 (-1)^{n-1}\Delta^{n-1}(1-\Delta)(\Delta\partial_\Delta+2).
\)
Thus, for $\mathcal R=\sum_{k\geq0}\mathcal R_kz^k$,
\[
 \mathrm{PF}_1\mathcal R_k
 =
 -\sum_{j=2}^{\min(n+1,k+1)}
 \mathrm{PF}_j\mathcal R_{k+1-j}.
\]
The homogeneous ambiguity at every step is a multiple of $\Delta^{-2}$.
The conditions \eqref{eq:extended-normalization} therefore determine
all $\mathcal R_k$ uniquely.
\end{proof}

\begin{remark}
A closely related Laplace-type transform was considered in \cite{DH}.
\end{remark}

For a series $H(z)$, put $T_H(z)=z(1-H(z))$. 
We also set
\[
 G(z)=\sum_{k\geq0}(-1)^k(2k+1)!!z^k,
 \qquad P(z)=1-zG(z).
\]

\subsection{A geometric vanishing result}
We use the following consequences of the rank-one deformed Theta
theory \cite[Corollaries 3.24--3.25, Theorem 2.7, and
Proposition 4.9]{CGG}.  
Let $K_{g,n}=T_G\Omega^{\pt}$, then
\begin{equation}\label{eq:CGG}
 [K_{g,n}]_d=0 \text{ for } d>2g-2+n 
 \quad (\text{except $(n,d)=(0,3g-3)$}),
 \quad
 \int_{\Mbar_g}K_{g,0}=\frac{(-1)^gB_{2g}}{2g(2g-2)}.
\end{equation}
The above equations were first conjectured in \cite{KN}.

The degree $2g-2+n$ components are the ordinary
Theta classes 
and satisfy the following modified-unit property:
\begin{equation}\label{eq:theta-unit}
 [K_{g,n+1}]_{2g-1+n}
 =\psi_{n+1}\pi^*[K_{g,n}]_{2g-2+n}.
\end{equation}

For $n,r\geq0$ with $2g-2+n+r>0$, define
\begin{equation}\label{eq:contact}
 C_{g,n}^{(r)}=K_{g,n+r}
       \prod_{j=1}^rP(\psi_{n+j}),\qquad
 b_r(g,n)=2g-2+n+\lfloor r/2\rfloor.
\end{equation}
The first $n$ markings will be called ordinary and the last $r$
calibrated.

\begin{lemma}\label{lem:pairing}
  Suppose $n\geq1$, and let
  \([C_{g,n}^{(r)}]_d\in H^{2d}(\Mbar_{g,n+r})\)
  be the degree-$d$ component of \(C_{g,n}^{(r)}\) and
  \(\alpha\in H^{2(3g-3+n+r-d)}(\Mbar_{g,n+r})\)
  be a tautological class.  
  If $d>b_r(g,n)$,
  then
  \begin{equation}\label{eq:pairing-bound}
  \int_{\Mbar_{g,n+r}}\alpha[C_{g,n}^{(r)}]_d=0.
  \end{equation}
  For $n=0$ and $r\geq1$, the same assertion holds when
  $b_r(g,0)<d<3g-3+r$.
\end{lemma}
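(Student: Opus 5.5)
Expand the product $\prod_{j=1}^r P(\psi_{n+j})$ and study each monomial separately. Since $P(z)=1-zG(z)$ and $G(z)=\sum_k(-1)^k(2k+1)!!z^k$, we have
\[
P(z)=1-\sum_{k\ge1}(-1)^{k-1}(2k-1)!!\,z^{k}.
\]
Hence $[C^{(r)}_{g,n}]_d$ is a finite sum of terms
\[
c_{\mathbf k}\,[K_{g,n+r}]_{d-|\mathbf k|}\prod_{j=1}^r\psi_{n+j}^{k_j},\qquad k_j\ge0 .
\]
Each $\psi$-monomial can be absorbed into the tautological class $\alpha$. For a fixed subset $S\subset\{1,\dots,r\}$ of calibrated markings with $k_j=0$, the term with $\psi$-exponents $k_j\geq 1$ for $j\notin S$ has coefficient $\prod_{j\notin S}(-1)^{k_j}(2k_j-1)!!$, up to an overall sign.

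The strategy is to induct on $r$, using \eqref{eq:theta-unit} to peel off calibrated markings. The base case $r=0$ is exactly \eqref{eq:CGG}: $[K_{g,n}]_d=0$ for $d>2g-2+n$ when $n\ge1$. For $n=0$ the exceptional degree $3g-3$ is excluded by the hypothesis $d<3g-3+r$ (vacuous at $r=0$). For the inductive step, note that for degree $e$ with $e>2g-2+n+r$ the vanishing of $[K_{g,n+r}]_e$ is automatic. So only the terms with $e:=d-|\mathbf k|\le 2g-2+n+r$ survive, and in the top degree $e=2g-2+n+r$ the class is a Theta class. Using \eqref{eq:theta-unit} iteratively, the top-degree Theta class on $\Mbar_{g,n+r}$ equals $\prod_{j}\psi_{n+j}\cdot\pi^*[K_{g,n}]_{2g-2+n}$, where the $\psi$'s are successively pulled back. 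Lower-degree pieces $e<2g-2+n+r$ need the analogous structure from the rank-one deformed Theta theory of \cite{CGG}. I would use the fact that $K$ is a $T_G$-translated point theory: $K_{g,n+1}(\ldots,\mathbf 1)$ with a calibrated insertion $P(\psi)$ is designed so that $T_G$ cancels against $P=1-zG$. Concretely, $T_G\Omega^{\pt}$ with an extra leg weighted by $P(\psi_{n+1})$ pushes forward (via the dilaton/string behavior of the translation) to a controlled expression on $\Mbar_{g,n}$.

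The key step is therefore a pushforward identity. Integrating $\alpha[C^{(r)}_{g,n}]_d$ along the forgetful map of one calibrated point should produce $C^{(r-1)}_{g,n}$ paired with a tautological class, in degree lowered by one only every other time. Forgetting a pair of calibrated points costs one degree in total, which matches $\lfloor r/2\rfloor$. I would prove this by comparing $\pi_*(\psi_{n+r}^{k}\,K_{g,n+r})$, using the translation formula for $T_G$, with $K_{g,n+r-1}$ times kappa/psi corrections. The double factorials $(2k-1)!!$ are exactly the coefficients making $\sum_k(-1)^{k-1}(2k-1)!!\,\kappa$-contributions telescope. Since $\alpha$ is tautological, I would write it as a pullback plus boundary/psi terms using the standard projection formula for tautological classes.

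The main obstacle is this parity-sensitive degree count: showing that after removing two calibrated markings the surviving degree drops by exactly one, rather than just bounding a single removal. I would first test it in genus zero and genus one with small $r$ to fix signs. Then I would try to derive it from the $(2k+1)!!$ structure, namely the moments $\int e^{-x^2/2}$ behind $G$, which is where the factor $1/2$ per marking appears. The $n=0$ case is handled identically, except that the exceptional top class of degree $3g-3$ must be tracked, which is why the upper bound $d<3g-3+r$ is needed.
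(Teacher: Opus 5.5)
Your proposal has a genuine gap, and it sits exactly where you flag the ``main obstacle.'' The claim that removing a pair of calibrated markings costs one degree is the whole content of the lemma, and your plan gives no mechanism for it.

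\textbf{Two preliminary problems.} First, expanding $\prod_j P(\psi_{n+j})$ into monomials and absorbing the $\psi$'s into $\alpha$ throws away the cancellation you need. The individual pairings $\int\alpha'[K_{g,n+r}]_e$ with $b_r(g,n)<e\le 2g-2+n+r$ do not vanish in general; only the specific $P$-combination does. Second, the elementary identity is not ``pushing forward gives $C^{(r-1)}_{g,n}$ against a tautological class.'' From $K_{g,n+1}G(\psi_{n+1})=\pi^*K_{g,n}$ and $P=1-zG$ one gets
\[
C^{(r)}_{g,n}=C^{(r-1)}_{g,n+1}-\psi_{n+r}\,\pi^*C^{(r-1)}_{g,n}.
\]
In the first term the calibrated leg becomes an ordinary leg; it is not forgotten. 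At $r=1$ this identity together with \eqref{eq:theta-unit} kills degree $2g-1+n$. For even $r$ one has $b_{r-1}(g,n+1)=b_{r-1}(g,n)+1=b_r(g,n)$, so every even step follows at once from induction and the projection formula.

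\textbf{The real difficulty is odd $r=2k+1\ge 3$.} There both thresholds equal $b_r(g,n)+1$, so the recursion leaves the borderline degree $d=b_r(g,n)+1$ untouched. Your suggested route (telescoping the $(2k-1)!!$ against $\kappa$-corrections, Gaussian moments) is undeveloped and does not address this. The vanishing is only asserted after pairing with tautological classes, which signals that tautological relations must enter. The paper closes the gap as follows.
\begin{itemize}
\item At this degree $\alpha$ has degree $g+k-1$. This is at least $g$ in positive genus and positive in genus $0$, except when $(g,k)=(0,1)$.
\item The $g$-reduction of Faber--Shadrin--Zvonkine then writes $\alpha$ as a sum of boundary pushforwards $(\xi_\Gamma)_*\boxtimes_v\alpha_v$ with tautological $\alpha_v$.
\item $C$ restricts to $\boxtimes_v C^{(r_v)}_{g_v,n_v}$, with node half-edges counted as ordinary legs.
\item The subadditivity $\sum_v b_{r_v}(g_v,n_v)\le b_r(g,n)$ forces some vertex $v_*$ to exceed its own threshold.
\item That factor vanishes by induction on $r$. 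When $r_{v_*}=r$, the induction is refined lexicographically in $(g,n)$: stability gives $g_{v_*}<g$, or the graph is a tree with rational outer vertices and $n_{v_*}<n$.
\item The exceptional case $(g,r)=(0,3)$ is settled by the explicit computation \eqref{eq:zero-three}.
\end{itemize}
Without some substitute for this boundary-reduction step, your induction cannot get past $r=3$.
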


\begin{proof}
Since $K_{g,n+1}G(\psi_{n+1})=\pi^*K_{g,n}$,
forgetting the last calibrated marking gives
\begin{equation}\label{eq:delete}
 C_{g,n}^{(r)}
 =C_{g,n+1}^{(r-1)}
  -\psi_{n+r}\pi^*C_{g,n}^{(r-1)}.
\end{equation}

We first prove the assertion for $n\geq1$.  The case $r=0$ is
\eqref{eq:CGG}.  For $r=1$, the only degree above $b_1(g,n)$
allowed by \eqref{eq:CGG} is $2g-1+n$; the two terms in
\eqref{eq:delete} cancel in that degree by
\eqref{eq:theta-unit}.  In genus zero case these
initial statements also follow directly from dimension reasons.

We do induction on $r$; when $r$ is odd, refine the induction
lexicographically by $(g,n)$, with the genus compared first.  The
thresholds of the two terms in \eqref{eq:delete} are
\begin{equation}\label{eq:threshold-arithmetic}
 \bigl(b_{r-1}(g,n+1),\,b_{r-1}(g,n)+1\bigr)
 =\begin{cases}
 (b_r(g,n),b_r(g,n)),&r\text{ even},\\
 (b_r(g,n)+1,b_r(g,n)+1),&r\text{ odd}.
 \end{cases}
\end{equation}
The projection formula therefore proves every even case from the
previous odd case.  If $r=2k+1\geq3$, the same argument applies when $d> b_r(g,n)+1$.  It remains to treat
$d=b_r(g,n)+1$.  A complementary tautological class then has degree
\[
 3g-3+n+r-d=g+k-1.
\]
Except when $(g,k)=(0,1)$, this degree is at least $g$ in positive
genus and is positive in genus zero.  The one-edge form of
$g$-reduction \cite[Section 3.3]{FSZ} expresses $\alpha$ as a sum
of pushforwards of products of tautological classes from boundary
strata.  In the exceptional case $(g,k)=(0,1)$, a direct computation
gives, with $s=\sqrt{1-2z}$,
\begin{equation}\label{eq:zero-three}
 \int_{\Mbar_{0,n+3}}C_{0,n}^{(3)}
 =(-1)^n n![z^n]\,
     2^{-n-1}s^{n-2}(1+s)^{n+1}=0\qquad(n\geq1).
\end{equation}

By $g$-reduction we may assume \(\alpha=(\xi_\Gamma)_*\boxtimes_v \alpha_v\),
where $\Gamma$ is a stable graph with at least one edge and
each $\alpha_v$ is homogeneous.
At a vertex $v$, let $r_v$ count calibrated legs and let $n_v$ count
ordinary legs and node half-edges, and write
\[
 \alpha_v\in H^{2(3g_v-3+n_v+r_v-d_v)}(\Mbar_{g_v,n_v+r_v}),
 \qquad \sum_vd_v=b_r(g,n)+1.
\]
 For every $n_v\geq1$, the gluing gives
 \(\xi_\Gamma^*C_{g,n}^{(r)}
 =\boxtimes_v C_{g_v,n_v}^{(r_v)}.\) Since
\[
 \sum_v b_{r_v}(g_v,n_v)
 =2g-2+n+\sum_v\lfloor r_v/2\rfloor
 \leq b_r(g,n),
\]
there is a vertex $v_*$ satisfying
$d_{v_*}>b_{r_{v_*}}(g_{v_*},n_{v_*})$.
If $r_{v_*}<r$, then the resulting factor
\[
 \int_{\Mbar_{g_{v_*},n_{v_*}+r_{v_*}}}
 \alpha_{v_*}
 [C_{g_{v_*},n_{v_*}}^{(r_{v_*})}]_{d_{v_*}}
\]
is killed
by induction on $r$.

Suppose instead that $r_{v_*}=r$. If $g_{v_*}<g$, the
subsidiary induction applies.  If $g_{v_*}=g$, the graph is a tree
and every other vertex is rational. Since $\Gamma$
is a stable graph, we have $n_{v_*}<n$, and the subsidiary induction applies at $v_*$ as
well.  This proves \eqref{eq:pairing-bound} for
$n\geq 1$.

For $n=0$, notice that the exceptional degree $3g-3$ component of
$K_{g,0}$ is multiplied by a $\psi$ class at each step of
\eqref{eq:delete},  and so it contributes only in the excluded top degree
$3g-3+r$.
The same induction then applies using \eqref{eq:pairing-bound} for
$n\geq 1$ at boundary vertices.
\end{proof}

\begin{corollary}\label{cor:selection}
  For $n,r\geq0$, $a_i,b_j\geq0$, and $2g-2+n+r>0$, if
  $2\sum_{i=1}^n a_i+\sum_{j=1}^r(2b_j-1)<2g-2$ and either $n\geq1$
  or $\sum_{j=1}^r b_j>0$, then
  \begin{equation}\label{eq:selection}
  \int_{\Mbar_{g,n+r}}K_{g,n+r}
  \prod_{i=1}^n\psi_i^{a_i}
  \prod_{j=1}^r\psi_{n+j}^{b_j}P(\psi_{n+j})=0.
  \end{equation}
\end{corollary}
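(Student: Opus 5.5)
The plan is to read the integrand as a single graded piece of $C^{(r)}_{g,n}$ paired against a monomial in $\psi$-classes, and then invoke Lemma~\ref{lem:pairing}. Write
\[
K_{g,n+r}\prod_{j=1}^rP(\psi_{n+j})=C^{(r)}_{g,n},
\qquad
\alpha=\prod_{i=1}^n\psi_i^{a_i}\prod_{j=1}^r\psi_{n+j}^{b_j}.
\]
This $\alpha$ is tautological and homogeneous of degree $\sum a_i+\sum b_j$. Only the component $[C^{(r)}_{g,n}]_d$ with
\[
d=3g-3+n+r-\sum_i a_i-\sum_j b_j
\]
contributes to the integral. It therefore suffices to show that this $d$ exceeds the threshold $b_r(g,n)=2g-2+n+\lfloor r/2\rfloor$, and, when $n=0$, that it is also strictly less than $3g-3+r$.

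For the first inequality, the hypothesis $2\sum a_i+\sum(2b_j-1)<2g-2$ rewrites as
\[
\sum a_i+\sum b_j<g-1+\tfrac r2 .
\]
Substituting into the formula for $d$ gives
\[
d>3g-3+n+r-(g-1)-\tfrac r2=2g-2+n+\tfrac r2\ \ge\ b_r(g,n).
\]
Hence $d>b_r(g,n)$ strictly, with no integrality issue, since $r/2\ge\lfloor r/2\rfloor$ already gives strict inequality.

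Now split into cases. If $n\ge1$, Lemma~\ref{lem:pairing} applies directly and the integral vanishes. If $n=0$, the hypothesis forces $\sum b_j>0$, so $r\ge1$ and $\deg\alpha\ge1$. This gives $d\le 3g-3+r-1<3g-3+r$, and the second clause of Lemma~\ref{lem:pairing} gives vanishing. The remaining degenerate case $n=r=0$ is excluded by the hypothesis.

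I expect no real obstacle beyond bookkeeping. One check is that the $P(\psi)$ factors are absorbed into $C^{(r)}$ rather than into $\alpha$, which matches the definition in \eqref{eq:contact}. The other is that $\alpha$ need not be of pure degree in any other sense; it is a monomial, so it is homogeneous. The only point requiring care is the $n=0$ upper bound, which is exactly where the assumption $\sum b_j>0$ is used.
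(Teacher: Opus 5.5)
Your proposal is correct and is exactly the intended derivation. The paper states this corollary without a separate proof, as the special case of Lemma~\ref{lem:pairing} in which $\alpha$ is the $\psi$-monomial. The hypothesis gives $d=3g-3+n+r-\sum_i a_i-\sum_j b_j>2g-2+n+r/2\ge b_r(g,n)$, and when $n=0$ the condition $\sum_j b_j>0$ supplies both $r\ge1$ and $d<3g-3+r$.
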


\subsection{The conifold Gap property of $T_{\omega}\Omega^{\pt}$}
\label{sec:rankone-gap}

For every stable $(g,n)$, define
\begin{equation}\label{eq:H-bracket}
  \left\langle A_1,\ldots,A_n\right\rangle^H_{g,n}
  :=
  \int_{\Mbar_{g,n}}
  (T_H\Omega^{\pt})_{g,n}
  \prod_{i=1}^n A_i(\psi_i),
\end{equation}
where $A_i=A_i(z)\in \QQ[z]$.

\begin{theorem}\label{thm:rankone}
Let $\omega(\Delta)\in\Delta+\Delta^2\mathbb C[[\Delta]]$, and let $T_\omega$ be the cone-Laplace translation of $\omega$ defined in Proposition--Definition~\ref{defn:conifoldlaplace}. Then the CohFT $T_\omega\Omega^{\mathrm{pt}}$ has the conifold gap property with respect to $\omega$, with $\kappa=1$, in the sense of Definition~\ref{def:conifold-reg-gap}.

\end{theorem}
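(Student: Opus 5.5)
The plan is to reduce everything to the Kazarian--Norbury/Theta theory $K_{g,n}=T_G\Omega^{\pt}$ and its calibrated variants $C^{(r)}_{g,n}$, and then use Lemma~\ref{lem:pairing} and Corollary~\ref{cor:selection} to kill every polar term except the leading one. First, I would expand the correlators of $T_\omega\Omega^{\pt}$ as a translation sum,
\[
\int_{\Mbar_{g,n}}(T_\omega\Omega^{\pt})_{g,n}\prod_i\psi_i^{a_i}=\sum_{m\ge0}\frac1{m!}\int_{\Mbar_{g,n+m}}\prod_i\psi_i^{a_i}\prod_{j=1}^m T_\omega(\psi_{n+j}).
\]
The $z^1$-coefficient of $T_\omega$ is $1-\cR_0[\omega]=1-\Delta^{-2}$, by \eqref{eq:extended-normalization}. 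I would absorb it by the dilaton equation, which produces an overall factor $\Delta^{2(2g-2+n)}$ and leaves the translation $z\bigl(1-H(z)\bigr)$ with $H=\Delta^2\cR_\omega$, $H(0)=1$. By \eqref{eq:Rcoeff}, the coefficient of $z^k$ in $H$ is a sum over $r\ge0$ and $s\ge0$ of terms proportional to $\Delta^{-k-r-s}\,\omega_{s+1}$, where $\omega=\sum\omega_{s+1}\Delta^{s+1}$. These terms come from $[x^{k+r+1}](A_\omega\Phi^r)$, and that coefficient is supported on monomials of total $x$-degree built from $\Phi^r=O(x^{2r})$ and $A_\omega$.

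Second, I would organize the expansion by introducing a bookkeeping grading. A marking carrying $\psi^k$ in $\Mbar_{g,n+m}$, with total $\psi$-degree $3g-3+n+m$, together with the factor $\Delta^{-k}$, gives a total $\Delta$-power of $4g-4+2n-(3g-3+n+\sum a_i)$, corrected by the extra powers $\Delta^{-r-s}$. For the pure case $\omega=\Delta$, I would check that the $r$-sum reproduces exactly $G(z)$ after rescaling. This uses $\frac{(k+r+1)!}{r!}[x^{k+r+1}]\bigl(x(1+x)^{-2}\Phi^r\bigr)$, together with the saddle-point identity that yields $(2k+1)!!$. In that case $T_\Delta\Omega^{\pt}$ is, up to the dilaton rescaling $z\mapsto z/\Delta$ and the $\Delta^{2g-2}$ weights, the theory $K$. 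Then \eqref{eq:CGG} gives $\int_{\Mbar_g}K_{g,0}=\frac{(-1)^gB_{2g}}{2g(2g-2)}$ times $\Delta^{2-2g}$, and the degree bound $[K_{g,n}]_d=0$ for $d>2g-2+n$ gives regularity for $n\ge1$.

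Third, for general $\omega$ I would write $T_\omega=T_\Delta+(\text{correction})$, where the correction consists of markings weighted by $\omega_{s+1}$ with $s\ge1$ and by subleading $r$-terms. Each corrected marking, rewritten in the $K$-normalization, becomes a calibrated leg carrying $\psi^{b}P(\psi)$, because the difference between $H$ and $G$ is exactly $1-zG=P$ up to shifts. The pole order in $\Delta$ of such a term is controlled by $2\sum a_i+\sum_j(2b_j-1)$ compared with $2g-2$. Corollary~\ref{cor:selection} then kills every polar contribution with $n\ge1$ or $\sum b_j>0$. What survives for $n=0$ is the contribution with $b_j=0$, which sums, via the string/dilaton-type relation \eqref{eq:delete}, to $\frac{(-1)^gB_{2g}}{2g(2g-2)}$ multiplied by a series in the $\omega_{s+1}$. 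The remaining task is to show that this series has polar part $\omega^{2-2g}$.

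The main obstacle is this last identification: showing that the surviving $b_j=0$ calibrated insertions resum exactly to the reparametrization $\Delta^{2-2g}\mapsto\omega^{2-2g}$, up to $O(\Delta^0)$. I would first test the structure on the linear case $\omega=\Delta+c\Delta^2$, where only one extra term appears. The approach I would try is to interpret a change of the coordinate $\Delta\mapsto\omega(\Delta)$ in the Laplace integral of Proposition--Definition~\ref{defn:conifoldlaplace} as a change of variable $x\mapsto\tilde x$. Under this change of variable, $\Phi(\tilde x)$ differs from $\Phi(x)$ by terms that generate precisely the $P$-insertions. The $b_j=0$ insertions then act as a pure dilaton/string rescaling, which replaces $\Delta$ by $\omega$ in the leading term.
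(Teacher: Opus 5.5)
There is a genuine gap. It sits exactly where you place your ``main obstacle'', and two of your intermediate claims are false.

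First, $T_\Delta\Omega^{\pt}$ is not a rescaled copy of $K$. For $\omega=\Delta$, \eqref{eq:Rcoeff} gives $\cR_1[\omega]=3\Delta^{-4}-4\Delta^{-3}$. Hence $\widetilde H(u):=\Delta^2\cR_\omega(\Delta,\Delta^2u)=1+(-3+4\Delta)u+\cdots$, and the order-$\Delta$ change of the translation is $-u[\widetilde H-G]_{\Delta^1}=\tfrac13\bigl(1-G(u)\bigr)-uP(u)$. The term $-uP(u)$ is a legitimate calibrated leg. The term $\tfrac13(1-G(u))$ is the first-order effect of the rescaling $G(u)\mapsto\rho^{-3/2}G(u/\rho)$, where $\rho=2\Phi(-\Delta)/\Delta^2=1+\tfrac23\Delta+\cdots$. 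Its coefficients grow like $(2m+1)!!$, so it is not a finite combination of $\psi^bP(\psi)$ ($b\ge0$) and $\psi^a$, and Corollary~\ref{cor:selection} says nothing about it.

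For general $\omega$, $\widetilde H$ also carries an overall factor $\mu=\omega/\Delta$. This contributes insertions $(\mu-1)\bigl(P(\psi)-1\bigr)$, i.e.\ $b=0$ legs, which the Corollary does not kill either. So the claim that each corrected marking is a calibrated leg whose cost is bounded by its $\Delta$-order does not follow from ``$H-G$ is $P$ up to shifts''. In your normalization it is false, and its correct version is the real content of the proof.

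Your proposed fix, a change of variable $x\mapsto\tilde x$ realizing $\Delta\mapsto\omega$, cannot work as stated. $\cR[\omega]$ is linear in $\omega$, which enters only through the amplitude $A_\omega$, never through the phase $\Delta x-\Phi(x)$.

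The missing idea is a steepest-descent normalization that removes both rescaling modes exactly, before any expansion. The paper substitutes $\Delta s-\Phi(s)=\Phi(-\Delta)(1-y^2)$, which makes the phase quadratic. At the saddle $s=\Delta/(1-\Delta)$ the amplitude is $A_\omega=\omega(\Delta)(1-\Delta)$; this is where $\omega$, rather than $\Delta$, appears.

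Set $\mu=\omega/\Delta$, $\gamma=\frac{2\Phi(-\Delta)}{\Delta^2}\mu^{-2/3}$ and $H(z)=\gamma^{3/2}\widetilde H(\mu^{2/3}\gamma z)$. Lemma~\ref{lem:mode-filtration} then yields \eqref{eq:cost} with $s,a\ge1$: even monomials $y^{2r}$ give partial fractions in $1/(2(k-j)+1)$ with $j<d/2$, and odd monomials give finitely supported sequences. So every order-$\Delta^d$ correction has cost at most $d$, and the Corollary kills it for $d<2g-2$.

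Your intuition that the surviving pieces are pure rescalings is right, but no resummation is needed. Write $\cR_\omega(\Delta,z)=c\,H(\lambda z)$ with $c=\Delta^{-2}\gamma^{-3/2}$ and $\lambda=\Delta^{-2}\mu^{-2/3}\gamma^{-1}$. The dilaton and dimension constraints then produce the factor $c^{2-2g}\lambda^{3g-3}=\omega^{2-2g}$ exactly, which is \eqref{eq:normalized-vacuum}.
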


\begin{proof}
  First we prove the gap part
\begin{equation}\label{eq:rankone-gap}
 \int_{\Mbar_g}(T_\omega\Omega^{\pt})_{g,0}
 =
 \frac{(-1)^gB_{2g}}{2g(2g-2)}\,
 \omega(\Delta)^{-(2g-2)}
 +O(1).
\end{equation}

Write $\omega(\Delta)=\Delta+\sum_{j\geq1}v_j\Delta^{j+1}$. Put
  \(
 \mu=\frac{\omega(\Delta)}{\Delta},
  \)
and normalize
\[
 \widetilde H(z):=\Delta^2\cR_\omega(\Delta,\Delta^2z).
\]
By Lemma~\ref{lem:mode-filtration}, there are
$\gamma\in1+\Delta\mathbb Q[v_1,v_2,\ldots][[\Delta]]$ and
$H(z)=G(z)+O(\Delta)$ such that
\begin{equation}\label{eq:affine-characteristic}
 H(z)=\gamma^{3/2}\widetilde H(\mu^{2/3}\gamma z).
\end{equation}
All fractional powers are expanded with constant term one.
Moreover, if
\[
 H-G=-\sum_{d\geq1}\Delta^d\frac{A_d(z)}z,
\]
then each $A_d$ is a finite linear combination of
\begin{equation}\label{eq:cost}
 z^s P(z),\qquad 2s-1\leq d,
 \qquad\text{and}\qquad
 z^a,\qquad 2a\leq d.
\end{equation}

The dilaton and dimension equations give
\begin{equation}\label{eq:normalized-vacuum}
  \omega(\Delta)^{2g-2}
 \int_{\Mbar_g}(T_\omega\Omega^{\pt})_{g,0}
 =
 \int_{\Mbar_g}(T_H\Omega^{\pt})_{g,0}.
\end{equation}
Expanding the right-hand side at $H=G$, its coefficient of $\Delta^d$,
$d>0$, is
\begin{equation}\label{eq:Taylor}
 \sum_{n=1}^d\frac1{n!}
 \sum_{\substack{d_1+\cdots+d_n=d\\ d_i\geq1}}
 \left\langle A_{d_1},\ldots,A_{d_n}\right\rangle^G_{g,n}.
\end{equation}
For every monomial in the insertions \eqref{eq:cost},
\[
 2\sum_i a_i+\sum_j(2s_j-1)\leq d.
\]
Hence Corollary~\ref{cor:selection} implies that
\eqref{eq:Taylor} vanishes for $d<2g-2$.
Since $H|_{\Delta=0}=G$, \eqref{eq:CGG} gives
\[
 \int_{\Mbar_g}(T_H\Omega^{\pt})_{g,0}
 =\frac{(-1)^gB_{2g}}{2g(2g-2)}+O(\Delta^{2g-2}).
\]
As $\omega(\Delta)=\Delta+O(\Delta^2)$, equation \eqref{eq:normalized-vacuum}
proves \eqref{eq:rankone-gap}.

  Next we prove the regular part:   for every stable $(g,n)$ with $n\geq1$
and $a_1,\ldots,a_n\geq0$,
\begin{equation}\label{eq:marked-bound}
 \left\langle
 z^{a_1},\ldots,z^{a_n}
 \right\rangle^{\cR_{\omega}}_{g,n}
 \in\CC[[\Delta]].
\end{equation}

Apply the same Taylor expansion as in the proof of
equation \eqref{eq:rankone-gap}, now with the $n$ ordinary insertions.
Corollary~\ref{cor:selection} gives
\[
 \ord_\Delta
 \left\langle
 z^{a_1},\ldots,z^{a_n}
 \right\rangle^{\widetilde H}_{g,n}
 \geq
 2g-2-2\sum_i a_i.
\]
  Then \eqref{eq:marked-bound} follows by 
  \(
  \left\langle z^{a_1},\ldots,z^{a_n} \right\rangle^{\cR_{\omega}}_{g,n}=\Delta^{2\sum_i a_i +2 -2g}
  \left\langle z^{a_1},\ldots,z^{a_n} \right\rangle^{\widetilde H}_{g,n}
  \).
\end{proof}

\subsection{Finish the proof of Theorem \ref{main2}} \label{proofmain2}

Theorem \ref{Omega01} gives (i), and Section~\ref{QDEsystem} verifies (ii). The oscillatory-integral result in \cite[Section~6]{NMSP2}, together with the uniqueness in Proposition \ref{thm:transfer}, identifies the MSP point series with the cone-Laplace translation. Theorem \ref{thm:rankone} and the dilaton equation then give (iii). Thus all three conditions of Theorem~\ref{thm:main} hold, and the conifold gap follows.  \\

\section{Applications to one-parameter models}
\label{sec:applications}

\subsection{A uniform genus-zero algorithm for one-parameter Calabi-Yau threefolds}\label{sec:uniform-genus-zero}
Let $X$ have $h^{1,1}=1$ and hypergeometric period equation
$[D^4-\mathsf{q}\prod_{a=1}^4(D+\beta_a)]\omega=0$, where
$D=q\partial_q$, $\mathsf{q}=rq/\ft$, $\Delta=1-\mathsf{q}$, and the
multiset $\{\beta_a\}\subset(0,1)\cap\mathbb Q$ is invariant under
$\beta\mapsto1-\beta$; in particular,
$\sum_a\beta_a=2$.  This includes the models in
\cite[Section~4]{HKQ}.  Put $c=r\prod_a\beta_a$ and
$\kappa=\int_XH^3$. 

We consider  the master  
$I$-function 
\begin{equation}\label{eq:models-I}
I^M=z\sum_{d\ge0}(rq)^d\prod_{m=1}^d
\frac{\prod_{a=1}^4(p+(m-1+\beta_a)z)}
{(p+mz)^4(p+\ft+mz)},\qquad p^4(p+\ft)=0.
\end{equation}
Deleting $p+\ft+mz$,  setting $p=H$, and multiplying by
$(q/\ft)^{H/z}$ gives $I^X=\sum_{j=0}^3I_jH^jz^{1-j}$.

Assume that $X$ admits an MSP theory with the mirror and QRR comparisons of Section~\ref{sec:msp} and polynomial marked $[0,1]$ correlators.
We consider the three conditions of Theorem~\ref{thm:main}.
\begin{itemize}
    \item   The localization together with QRR gives the 
identification 
$$\Omega^M=R T_R(\Omega^{X,\tau}\oplus\Omega^{\mathrm{pt}})$$ with
symplectic $R$. Here $\tau_0=-c\, q$, where $I^M(q,z)=z+c\, q+O(z^{-1})$; the remaining components of $\tau$ are obtained from the formulas in Section~\ref{QDEsystem} by replacing the quintic periods with those of $X$.

\item  
The local-global QDE system follows from the reconstruction in Section \ref{QDEsystem}, with  $120$ and $5^5$ replaced by $c$ and $r$, respectively, and with the periods of $X$ in place of the quintic periods. In particular, $\tau_{\mathrm{pt}}=(r-c)q$.

\item  
At the $\pt$ component,  $p=-\ft$. The series \eqref{eq:models-I} satisfies the extended Picard–Fuchs equation of Proposition~\ref{thm:transfer},  with $z$ replaced by $z/\mathfrak t$.
The generalization of the oscillatory-integral argument of \cite[Section~6]{NMSP2}, gives the normalization for the asymptotic solution, so that the positive-order coefficients satisfy the required vanishing at infinity. The uniqueness in Proposition~\ref{thm:transfer} identifies this solution with the normalized conifold Laplace solution, and Theorem~\ref{thm:rankone} proves (iii).
\end{itemize}
The same gap argument applies to the other models in \cite{HKQ} once the required MSP comparisons and polynomiality are established.

For a degree-$k$ Calabi–Yau hypersurface $X_k\subset\mathbb P(a_1,\ldots,a_5)$, where $k=\sum_i a_i$, set
\[
r=\frac{k^k}{\prod_i a_i^{a_i}},
\qquad
c=\frac{k!}{\prod_i a_i!},
\qquad
\kappa=\int_{X_k}H^3=\frac{k}{\prod_i a_i}.
\]
The quintic threefold and the  models studied in \cite{LeiPoly} have the following data:
\[
\begin{array}{c|c|c|c}
k&(a_1,\ldots,a_5)&(\beta_1,\ldots,\beta_4)&\kappa\\ \hline
5&(1,1,1,1,1)&(1,2,3,4)/5&5\\
6&(1,1,1,1,2)&(1,2,4,5)/6&3\\
8&(1,1,1,1,4)&(1,3,5,7)/8&2\\
10&(1,1,1,2,5)&(1,3,7,9)/10&1
\end{array}
\]The MSP moduli spaces are constructed in \cite{CGLLZ}, and their localization and polynomiality properties are studied in \cite[Sections~2--4]{LeiPoly}, following the quintic case. The point-asymptotic argument of \cite[Lemma~5.5 and Section~5.2]{LeiPoly} also applies at $N=1$, since the auxiliary degree-$k$ theory on $\mathbb P(a_1,\ldots,a_5,1)$ remains Fano.

The algorithm above verifies the hypotheses of
Theorem~\ref{thm:main}.
We obtain
\begin{equation}\label{eq:models-gap}
F_g^{X_k,c}(\tau_c)=
\frac{(-1)^{g-1}B_{2g}}
{\kappa^{g-1}\,2g(2g-2)\,\tau_c^{2g-2}}+O(1),
\qquad \tau_c=\omega_1/\omega_0,\quad g\ge2.
\end{equation}

\subsection{Local $\mathbb P^2$}
\label{sec:local-p2-application}
Put $X=\operatorname{Tot}(\mathcal O_{\mathbb P^2}(-3))$ and
$\kappa=-1/3$. Complete the local theory to a rank-four  CohFT
by adjoining a formal degree-three class $H^3$ and setting
\[
\mathscr H_X=\langle1,H,H^2,H^3\rangle,\qquad
\eta_X(H^i,H^j)=\kappa\delta_{i+j,3}.
\]
Let $F_g^+$ be the positive-degree local potentials, write
$t=\log Q$, and set
$\widetilde F_0=F_0^+-t^3/18$, $\widetilde F_1=F_1^+-t/12$, and
$\widetilde F_g=F_g^+$ for $g\ge2$.
For stable $(g,n)$ with no unit insertions, define
\[
\Omega^X_{g,n}(H^{\otimes n})
=\partial_t^n\widetilde F_g(t)\cdot [\pt]_{g,n},\qquad
\Omega^X_{g,n}(\ldots,H^i,\ldots)=0\quad(i=2,3),
\]
where $[\pt]_{g,n}$ is the normalized generator of the top-degree cohomology of \(\overline{\mathcal M}_{g,n}\) such that
\(\int_{\Mbar_{g,n}}[\mathrm{pt}]_{g,n}=1\), and the first
formula includes $n=0$, $g\ge2$.
Extend by the flat-unit axiom, with
$\Omega^X_{0,3}(1,a,b)=\eta_X(a,b)$ and
$\Omega^X_{1,1}(1)=4$.
These tensors define a   CohFT of Calabi-Yau threefold type with the required local vacuum
potentials.

The master construction uses
$W=\operatorname{Tot}(\mathcal O_{\mathbb P^3}(-3))$, equivalently
the inverse-Euler twist by $\mathcal O_{\mathbb P^3}(-3)$.
The torus scales the fourth homogeneous coordinate, with
$p|_X=H$ and $p|_{\mathrm{pt}}=-\ft$; its fixed targets are $X$ and
a point. Complete the master state space to
\[
\mathscr H_M=\mathbb Q(\ft)[p]/(p^4(p+\ft)),\qquad
\eta_M(a,b)=\kappa[p^4](ab),
\]
where the coefficient is taken after reduction to degree at most
four. The added class $p^3(p+\ft)/\ft$ restricts to $H^3$, and its
quotient recovers the geometric relation $p^3(p+\ft)=0$.

The master $I$-function is \eqref{eq:models-I} with
\[
r=-27,\qquad c=0,\qquad
(\beta_1,\beta_2,\beta_3,\beta_4)=(0,\tfrac13,\tfrac23,1),
\qquad \mathsf q=-27q/\ft.
\]
Construct the shift $\tau_X$ and the symplectic $R$-matrix on the
completed state spaces as in Section~\ref{sec:uniform-genus-zero},
using these parameters and the QRR factors of $W$. Define
\[
\Omega^M=R T_R(\Omega^{X,\tau_X}\oplus\Omega^{\mathrm{pt}}),
\]
where $\Omega^{\mathrm{pt}}$ has metric $\kappa$ and the divisor
shift is incorporated into $Q$ in the local vacuum potentials.

Localization identifies the positive-degree marked ancestor
correlators of $\Omega^M$ with those of $W$ under the natural
projection of state spaces.
Correlators containing $p^3(p+\ft)/\ft$, which restricts to the
formal $H^3$, have zero positive-degree part and are therefore
$q$-independent.
Together with the geometric degree bound, this proves polynomiality
of all stable marked ancestor correlators of $\Omega^M$, hence (i).
The verifications of (ii) and (iii) are the same as in
Section~\ref{sec:uniform-genus-zero}.
Theorem~\ref{thm:main} therefore gives the conifold gap for local
$\mathbb P^2$.

\subsection{FJRW theory of Fermat singularities}\label{sec:fjrw-application}
Let $W_k=\sum_{i=1}^5x_i^{k/a_i}$ be the Fermat polynomial defining the corresponding Calabi–Yau hypersurface in Section~\ref{sec:uniform-genus-zero}. We consider the FJRW theory of $(W_k,G)$, where $G=\langle J_W\rangle$ and $J_W=\operatorname{diag}(e^{2\pi i a_1/k},\ldots,e^{2\pi i a_5/k})$. Denote its CohFT by $\Omega^{\mathrm{LG}}$.
We retain the constants $r,\kappa,\beta_1,\ldots,\beta_4$.

 Write $\varphi_j$ for the narrow state in the
$J_W^{k\beta_{j+1}}$ sector, $0\le j\le3$, with
$\deg\varphi_j=j$ and
$\eta^{\mathrm{LG}}(\varphi_i,\varphi_j)=\delta_{i+j,3}/k$.
Its dual basis is $\varphi^i=k\varphi_{3-i}$, extended by zero on the point.
Let $I_j$ denote the FJRW $I$-functions
in this basis \cite[Chapters~1 and 3]{Z}, normalized by
$I_j=c_jt^{k\beta_{j+1}}(1+O(t^k))$, where $t$ is the complex parameter near the orbifold point, {$c_0=1$ and $c_j$ are rational numbers determined by $k,a_i,\beta_l$.}  We use
\[
\mathsf q=\frac{t^k}{r} ,\qquad \Delta=1-\mathsf q,
\qquad D=\frac tk\partial_t ,\qquad \tau=I_1/I_0.
\]

We verify the same three conditions as in
Section~\ref{sec:uniform-genus-zero}.
\begin{itemize}
\item[(i)]  Taking $d_0=0$ in  MSP theory, we get
MSP $[1,\infty]$ theory, with a weight one torus action scaling $\nu$ field. Then localization and QRR give
\cite[Chapters~2--4]{Z}
\begin{equation}\label{eq:fjrw-reconstruction}
\Omega^M=RT_R(\Omega^{W,\tau_W}\oplus\Omega^{\mathrm{pt}}),
\qquad \Omega^W_{g,n}=t^{2-2g-n}\Omega^{\mathrm{LG}}_{g,n}.
\end{equation}
Here $\tau_W=0\cdot\varphi_0+t(I_1/I_0)\varphi_1+\cdots$ is the Dijkgraaf-Witten map, and the rescaled CohFT $\Omega^W$ has unit $t\varphi_0$.
The virtual dimension bound implies that the even  ancestor correlators are Laurent
polynomials in $t$. Hence the conifold regularity holds. 
\item[(ii)] The MSP $I$-function restricted to $d_0=0$
 is given by (see \cite[Chapter~3]{Z})
\begin{align}\label{eq:fjrw-master-I}
I^M={}&z\sum_{d\ge0}\mathsf q^d\prod_{m=1}^d
\frac{(-1+(m-1)z)^4}
{mz\prod_{a=1}^4(-1+(m-\beta_a)z)}\,\mathbf1_{\mathrm{pt}}\\
&+z\sum_{j=0}^3\sum_{d\in\beta_{j+1}+\mathbb Z_{\ge0}}t^{kd}
\frac{ \prod_{i=1}^5
\prod_{\substack{0<b<a_i d\\\langle b\rangle=\langle a_i d\rangle}}(bz)}
{  z^{kd-1}(kd-1)!
\prod_{\substack{0<b\le d\\\langle b\rangle=\langle d\rangle}}(1+bz)}
\,\varphi_j. \nonumber
\end{align}
Here $\langle\cdot\rangle$ denotes fractional part. Direct computation of
Birkhoff factorization of $I^M$ gives
$A^M\in\operatorname{Mat}_5(\mathbb Q[t,t^{-1}])$ and
\[
\det[\mathbf1_M,A^M\mathbf1_M,\ldots,(A^M)^4\mathbf1_M]
=c_k t^{2k},\qquad c_k\ne0.
\]
Thus $A^M,A^{\mathrm{pt}}=-1+\mathsf q$ and the inverse of this matrix
are conifold regular.
The QRR formulas for $\mathcal R_0,\mathcal R_1$ are differential
polynomials in $I_0$ and $D\tau$, as in GW.
Substituting $I_i\mapsto\omega_i$ makes both regular, with
$\mathcal R_0|_{z=0}=\omega_0$ and
$D(\omega_1/\omega_0)=-1+O(\Delta)$ a unit.
Hence Proposition~\ref{prop:decompose-R} applies.
\item[(iii)] The Picard-Fuchs equation for \eqref{eq:fjrw-master-I}
is \eqref{eq:general-extended} with left shifts $-\beta_a$ and
zero right shifts, and $4-\sum_a\beta_a=2$.
With the QRR normalization \eqref{eq:extended-normalization},
uniqueness in Proposition~\ref{thm:transfer} gives
$\mathcal R_{\mathrm{pt}}=\cR_{\omega_1}$;
Theorem~\ref{thm:rankone} then proves (iii).
\end{itemize}

Finally, the translated primary  potential of $\Omega^W$ is 
is 
$$(I_0/t)^{2-2g}t^{2-2g}F_g^{\mathrm{LG}}(\tau)
=I_0^{2-2g}F_g^{\mathrm{LG}}(\tau).$$
The proof of Theorem~\ref{thm:main} therefore gives the following  conifold expansion:
\begin{equation}\label{eq:fjrw-gap}
F_g^{\mathrm{LG},c}(\tau_c)
=\frac{\kappa^{1-g}(-1)^{g-1}B_{2g}}{2g(2g-2)\tau_c^{2g-2}}+O(1),
\qquad \tau_c=\omega_1/\omega_0,\quad g\ge2.
\end{equation}

\subsection{Summary}
The picture below summarizes the two conifold gap proofs within
MSP theory. For quintic GW theory, we set $d_\infty=0$ and restrict
the localization sum to the $[0,1]$ part, excluding level-$\infty$
vertices \cite{NMSP2}. For FJRW theory, we set $d_0=0$ and naturally obtain the $[1,\infty]$-theory \cite{GR,Z}. These two parts connect
the common level-one point theory to the level-zero GW theory
and the level-$\infty$ FJRW theory, respectively.
In each case, the corresponding translated point theory (cone-vertex theory) has
the conifold gap property. Conifold regularity of the restricted
MSP theory, together with cancellation of the polar parts of
the non-leading graph contributions, then transfers the gap
to the GW or FJRW theory.

\begin{tikzpicture}[x=1cm,y=1cm]

  \begin{scope}[shift={(8.30,5.55)}]
    \MSPGeometry
  \end{scope}
\end{tikzpicture}

\vspace{2cm}
\appendix

\addtocontents{toc}{\protect\setcounter{tocdepth}{-1}}

\section{The computation of the coordinate}
\label{app:dwmap}

For completeness, 
we prove the formula for the Dijkgraaf-Witten coordinate $\tau_Q$, which was presented  in p.~623 of \cite{NMSP2} without a proof.   
We set $\ft=1$ throughout this appendix.

\begin{lemma} \label{formulatauq}
The coordinate $\tau_Q$ defined in  \eqref{birk} is given by
$$
\tau_Q= - 5!q+\tau_1 H  +\tau_2 H^2 +\tau_3 H^3
$$
with $\tau_1= I_1/I_0-\log q$, $\tau_2 = - \frac{1}{2} \, I_{11} I_{22}$ and $\tau_3 =  I_{11}^2I_{22} \big( -\frac{1}{12}+\frac{Y}{4}-\frac{B_1}{2}-\frac{A_1}{4} \big)$
\end{lemma}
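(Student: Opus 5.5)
The plan is to read off $\tau_Q$ from the Birkhoff factorization \eqref{birk}, applied to the unit vector and restricted to the quintic component. Applying the adjoint of \eqref{birk} to $\mathbf 1_M$ and using \eqref{Goperator}, the quintic part of $S^M(z)^*\mathbf1_M$, after the quantum Riemann--Roch correction, becomes
\[
q^{H/z}e^{-G_0(-zD,z)}I^Q(q,-z).
\]
This vector must equal $R(-z)^{-1}$ applied to a vector of the form $S^Q_{\tau_Q}(-z)^{*}(\cdots)$. Since $R(z)$ is a power series in $z$, $\tau_Q$ is characterized as the unique point such that this vector lies on the quintic Lagrangian cone $\mathcal L^Q$ and is of the form $-z\,\mathbf 1+\tau_Q+O(z^{-1})$ modulo the tangent space $zT$. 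Concretely, write
\[
J^{\mathrm{tw}}:=e^{-G_0(-zD,z)}q^{H/z}I^Q(q,-z)=\ft^{-1/2}\sum_{m\geq0}(z/\ft)^m\,\mathfrak D_m(D)\,q^{H/z}I^Q(q/\ft,-z).
\]
The expansion of $e^{-G_0}$ is already recorded in Section~\ref{quintictauq}. By the mirror theorem and the computation of $I_j^{[k]}$ given there, each $D^k\bigl(q^{H/z}I^Q\bigr)$ equals $S^Q_{\tau_1H}(z)^{-1}$ applied to $(-z)\sum_j H^jz^{-j}I_j^{[k]}$. Hence
\[
S^Q_{\tau_1H}(z)\,J^{\mathrm{tw}}=(-z)\,\ft^{-1/2}\sum_{j,m}H^j z^{m-j}\ft^{-m}I_j^{[\mathfrak D_m]}.
\]

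The second step is to perform the Birkhoff factorization along the $H^2,H^3$ directions. Since quantum multiplication by $H^2,H^3$ is classical and $H^4=0$, we have $S^Q_{\tau_1H+\tau_2H^2+\tau_3H^3}=(1+(\tau_2H^2+\tau_3H^3)/z)S^Q_{\tau_1H}$, and $\tau_0=-120q$ enters through the string equation via $e^{\tau_0/z}$; the latter is absorbed by the $e^{-120q/z}$ in \eqref{msp-unit}. The condition that the resulting vector be a power series in $z$ (i.e.\ that $R^{-1}\mathbf1_M|_Q$ has no negative powers of $z$) is exactly the statement that the coefficients of $z^{-1}H^2$ and $z^{-1}H^3$, $z^{-2}H^3$ cancel. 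This gives triangular linear equations for $\tau_2,\tau_3$ in terms of the $I_j^{[k]}$, using only $\mathfrak D_1$ and $\mathfrak D_2$ (with $\deg\mathfrak D_m\le 2m$, so $D^{\le4}$). Namely: the $z^{-1}H^2$ coefficient gives
\[
\tau_2 I_0\cdot(\text{normalization})=-\bigl(I_2^{[\mathfrak D_1]}\bigr)/\ft\quad\text{(schematically)},
\]
and the $z^{-1}H^3$ coefficient determines $\tau_3$ from $I_3^{[\mathfrak D_2]}$, $I_2^{[\mathfrak D_1]}$ and $\tau_2I_1^{[\mathfrak D_1]}$. Normalizing by the $z^0\mathbf1$ coefficient $I_0$ fixes the overall factor. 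The unknown $\tau_1$ is fixed by the $H z^0$ coefficient, which forces $\tau_1=I_1/I_0-\log q$ after accounting for $q^{H/z}$.

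The final and main step is to simplify these expressions into the stated closed forms. I would compute $\mathfrak D_1=\tfrac{1}{2}B_2(D+1)$ and $\mathfrak D_2$ explicitly. Then I would express $I_2^{[k]},I_3^{[k]}$ for $k\le4$ via the recursion $I_j^{[k+1]}=DI_j^{[k]}-I_{jj}I_{j-1}^{[k]}$ in terms of $I_0,I_{11},I_{22},A_1,B_k$, using $I_{33}=I_{11}$. Finally, I would reduce using the Picard--Fuchs equation, which eliminates $B_4$ and higher and introduces $Y=(1-5^5q)^{-1}$, together with the known relation $I_{11}^2I_{22}I_0^2=Y$ (equivalently $I_{22}=Y/(I_0^2I_{11}^2)$). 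The obstacle is this bookkeeping: in particular, verifying that all $B_2,B_3$ and $DA_1$ terms cancel in $\tau_3$, leaving only $-\tfrac1{12}+\tfrac Y4-\tfrac{B_1}{2}-\tfrac{A_1}{4}$. For that cancellation I would use the standard quintic identities relating $DI_{22}$ to $A_1$, $B_1$ and $Y$, obtained by differentiating $I_0^2I_{11}^2I_{22}=Y$ and using $DY=Y(Y-1)$.
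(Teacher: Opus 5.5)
Your proposal is the paper's proof: take the unit column of \eqref{birk}, expand $e^{-G_0}$ via $\mathfrak D_1,\mathfrak D_2$, use the mirror-theorem recursion for $I_j^{[k]}$ and the factor $1+(\tau_2H^2+\tau_3H^3)/z$, and cancel the only possible poles $H^2z^{-1}$ and $H^3z^{-1}$ (your $H^3z^{-2}$ term vanishes automatically because $I_3^{[k]}=0$ for $k<3$), which gives $\tau_2=-\tfrac12I_{11}I_{22}$ and $\tau_3=I_{11}^2I_{22}(\tfrac Y4+\tfrac16)+\tau_2I_{11}(B_1+\tfrac{A_1}2+\tfrac12)$. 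Two small corrections. First, keep $q^{H/z}$ outside $e^{-G_0(-zD,z)}$ and outside $D^k$, as in your first display and \eqref{Goperator}; otherwise differentiating $q^{H/z}$ adds $H/z$ terms and the recursion $I_j^{[k+1]}=DI_j^{[k]}-I_{jj}I_{j-1}^{[k]}$ breaks. Second, the $B_2,B_3,DA_1$ cancellations you anticipate never arise: only $I_1^{[\le2]},I_2^{[\le2]},I_3^{[3]},I_3^{[4]}$ enter, and these need only $I_0^2I_{11}^2I_{22}=Y$ and $DY=Y(Y-1)$.
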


\begin{proof} We complete the calculation in Section~\ref{quintictauq} by cancelling the negative
powers of $z$ in the quintic unit column. 
The scalar factor $e^{120q/z}$ in \eqref{msp-unit} at $-z$ is
cancelled by $\tau_0=-120q$. The divisor equation gives
$\tau_1=I_1/I_0-\log q$. It remains to determine $\tau_2,\tau_3$.

Only the first two operators from Section~\ref{quintictauq} are needed:
\[
\mathfrak D_1(D)=\frac{D^2}{2}+\frac D2+\frac1{12},
\qquad
\mathfrak D_2(D)=\frac{D^4}{8}+\frac{5D^3}{12}
+\frac{5D^2}{12}+\frac D8+\frac1{288}.
\]
The recurrence for $I_j^{[k]}$ gives
\[
\begin{aligned}
I_2^{[\mathfrak D_1]}&=\frac12 I_0I_{11}I_{22},\\
I_1^{[\mathfrak D_1]}&=-I_0I_{11}
\left(B_1+\frac{A_1}{2}+\frac12\right),\\
I_3^{[\mathfrak D_2]}&=-I_0I_{11}^2I_{22}
\left(\frac Y4+\frac16\right).
\end{aligned}
\]
For the last equality, the same recurrence and
$I_{22}=Y/(I_0^2I_{11}^2)$ give
$I_3^{[3]}=-I_0I_{11}^2I_{22}$ and
$I_3^{[4]}=-2I_0I_{11}^2I_{22}(Y-1)$.

Since $I_j^{[k]}=0$ for $k<j$ and
$\deg_D\mathfrak D_m\le2m$, the only possible negative powers in
the sum from Section~\ref{quintictauq} are
$H^2z^{-1}I_2^{[\mathfrak D_1]}$ and
$H^3z^{-1}I_3^{[\mathfrak D_2]}$.
After multiplication by $1+(\tau_2H^2+\tau_3H^3)/z$, the only
possible poles are still the monomials $H^2z^{-1},H^3z^{-1}$,
since $H^4=0$. In the full unit column, dividing their coefficients
by $I_0$ gives the two equations
\begin{equation}\label{msp-poles}
\begin{aligned}
0&=\tau_2+\frac{I_{11}I_{22}}2,\\
0&=\tau_3-I_{11}^2I_{22}
\left(\frac Y4+\frac16\right)
-\tau_2I_{11}
\left(B_1+\frac{A_1}{2}+\frac12\right).
\end{aligned}
\end{equation}
Solving these equations and using $I_{11}^2I_{22}=Y/I_0^2$
gives the stated formulas.
\end{proof}

The same recurrence proves the ring property needed in Section~\ref{quintictauq}.
\begin{lemma}\label{lem:I-ring}
For $0\le j\le3$ and $k\ge0$,
\begin{equation}\label{msp-ring}
\frac{I_j^{[k]}}{I_0I_{11}\cdots I_{jj}}
\in\mathbb Q[A_1,B_1,B_2,B_3,Y],
\end{equation}
where the denominator is $I_0$ when $j=0$.
The same assertion holds with $I_j^{[k]}$ replaced by
$I_j^{[\mathfrak D_m]}$.
In particular,  
$I_j^{[k]},I_j^{[\mathfrak D_m]}\in\mathscr R$, and every coefficient
of $\left.R(z)^{-1}\mathbf1_M\right|_Q$ belongs to $\mathscr R$.
\end{lemma}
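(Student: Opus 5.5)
The approach is induction on $k$ via the recurrence $I_j^{[k+1]}=DI_j^{[k]}-I_{jj}I_{j-1}^{[k]}$, after normalizing by the product $N_j:=I_0I_{11}\cdots I_{jj}$. Put $J_j^{[k]}:=I_j^{[k]}/N_j$. Dividing the recurrence by $N_j$ and using $N_j=N_{j-1}I_{jj}$ gives
\[
J_j^{[k+1]}=DJ_j^{[k]}+\frac{DN_j}{N_j}J_j^{[k]}-J_{j-1}^{[k]} .
\]
So it suffices to show two things: the logarithmic derivatives $DN_j/N_j$ lie in the ring $\mathcal S:=\mathbb Q[A_1,B_1,B_2,B_3,Y]$, and $\mathcal S$ is closed under $D$. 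The base case is $J_j^{[0]}=\delta_{j0}$, and then induction on $k$ (for all $j$ simultaneously) gives \eqref{msp-ring}.

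For the logarithmic derivatives, compute $DN_j/N_j=B_1+\sum_{i\le j}DI_{ii}/I_{ii}$. We have $DI_{11}/I_{11}=A_1$, and $I_{33}=I_{11}$ contributes $A_1$ again. For $I_{22}=Y/(I_0^2I_{11}^2)$ we get $DI_{22}/I_{22}=DY/Y-2B_1-2A_1$. Here $Y=(1-5^5q)^{-1}$ (with $\ft=1$), so $DY=Y^2-Y$ and $DY/Y=Y-1\in\mathcal S$.

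For closure of $\mathcal S$ under $D$, the key step is computing $D$ of the generators:
\begin{itemize}
\item $DY=Y(Y-1)$.
\item $DB_k=B_{k+1}-B_1B_k$ for $k=1,2$.
\item $DB_3=B_4-B_1B_3$, where the Picard--Fuchs equation $D^4I_0=5^5q\prod_j(D+j/5)I_0$ together with $5^5q=1-Y^{-1}$ rewrites $B_4$ as $(1-Y^{-1})\cdot(\text{polynomial in }B_1,B_2,B_3)/(\text{leading coefficient})$. Solving for $B_4$ gives $B_4=(Y-1)\cdot(\text{polynomial in }B_1,B_2,B_3)$, which lies in $\mathcal S$.
\item $DA_1$ is the main obstacle.
\end{itemize}
Since $I_{11}=D(I_1/I_0)$, the functions $I_0$ and $I_1=I_0\tau$ both satisfy Picard--Fuchs. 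Expanding $D^4(I_0\tau)$ by Leibniz and subtracting $\tau\cdot\mathrm{PF}(I_0)$ yields a third-order linear ODE for $I_{11}$ whose coefficients are polynomials in $B_1,B_2,B_3,Y$. Its leading coefficient, a multiple of $(1-5^5q)$, has inverse proportional to $Y$. This expresses $D^3I_{11}/I_{11}$ through $D^2I_{11}/I_{11}$ and $A_1$ with coefficients in $\mathcal S$, but it does not directly give $DA_1$ polynomially.

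The standard remedy is the known special-geometry relations for the quintic, e.g.\ $I_{11}^2I_{22}=Y/I_0^2$ (given in the paper), and the identity $I_{33}=I_{11}$. Differentiating $\log$ of the first relation gives $2A_1+DI_{22}/I_{22}=Y-1-2B_1$, which is consistent with the above but does not determine $DA_1$. I would therefore use the Yamaguchi--Yau-type relation from \cite{NMSP2}, which expresses $DA_1=D^2I_{11}/I_{11}-A_1^2$ as a polynomial in $A_1,B_k,Y$ (via $DI_{22}=\dots$ and $I_{33}=I_{11}$). The relation $I_{22}I_{11}^2I_0^2=Y$ is the required input and is taken as known from \cite[Appendix~A]{NMSP2}.

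With closure in hand, the induction gives \eqref{msp-ring}. The $\mathfrak D_m$ version follows because $I_j^{[\mathfrak D_m]}$ is a $\mathbb Q$-linear combination of the $I_j^{[k]}$.

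Finally, since $N_j$ is a product of $I_0^{\pm1}$ and powers of $I_{11}=D\tau$ and $Y$, with $Y\in K$, and since the $B_k$ and $A_1$ are differential expressions in $I_0^{\pm1}$ and $(D\tau)^{\pm1}$, everything lies in $\mathscr R$. The formula for $R(z)^{-1}\mathbf1_M|_Q$ in Section~\ref{quintictauq} involves only these quantities together with $\tau_2,\tau_3\in\mathscr R$ (by Lemma~\ref{formulatauq}). Therefore every coefficient of $R(z)^{-1}\mathbf1_M|_Q$ belongs to $\mathscr R$.
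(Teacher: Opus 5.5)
Your proposal is correct and follows essentially the same route as the paper. Both arguments normalize $I_j^{[k]}$ by $N_j=I_0I_{11}\cdots I_{jj}$, use the logarithmic derivatives $B_1$, $B_1+A_1$, $Y-1-B_1-A_1$, $Y-1-B_1$ (which you compute correctly), induct on $k$ from $I_j^{[0]}=\delta_{j0}I_0$ using the closure of $\mathbb Q[A_1,B_1,B_2,B_3,Y]$ under $D$ from \cite{NMSP2}, and extend $\mathbb Q$-linearly to $\mathfrak D_m$. The only difference is that you verify closure by hand for $Y$ and the $B_k$ (using the Picard--Fuchs equation for $B_4$) and isolate $DA_1$ as the one generator needing the cited special-geometry input, whereas the paper cites closure of the whole ring at once.
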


\begin{proof}
The five-generator ring in \eqref{msp-ring}
is closed under $D$ \cite[Section~0.1]{NMSP2}.
The four denominators have logarithmic derivatives
\[
B_1,\qquad B_1+A_1,\qquad
Y-1-B_1-A_1,\qquad Y-1-B_1.
\]
Divide the recurrence of Section~\ref{quintictauq} by the corresponding
denominator. The factor $I_{jj}$ in the last term cancels the
ratio of consecutive denominators, so induction on $k$, starting
from $I_j^{[0]}=\delta_{j0}I_0$, proves
\eqref{msp-ring}.
Rational linearity gives the assertion for $\mathfrak D_m$.

The four denominators are
$I_0$, $I_0I_{11}$, $Y/(I_0I_{11})$, and $Y/I_0$,
all belong to $\mathscr R$, as do $\tau_2,\tau_3$.
Each coefficient of the unit-column formula in Section~\ref{quintictauq}
contains only finitely many terms, which proves the last assertion.
\end{proof}

\section{The coefficient estimate}

\label{app:watson}

We prove the coefficient estimate used in Theorem~\ref{thm:rankone}
and deduce the insertion bounds \eqref{eq:cost}.  Recall the notation
from its proof:
\[
 \mu=\frac{\omega(\Delta)}{\Delta},\qquad
 \widetilde H(u)=\Delta^2\cR_\omega(\Delta,\Delta^2u).
\]

\begin{lemma}\label{lem:mode-filtration}
  There is a scalar
  $\gamma\in1+\Delta\mathbb Q[v_1,v_2,\ldots][[\Delta]]$ such that
  \begin{equation}\label{eq:delta-expansion}
  \gamma^{k+3/2}\mu^{2k/3}
  \frac{[u^k]\widetilde H(u)}{(-1)^k (2k+1)!!}
  =1+\sum_{d\geq1}\delta_d(k)\Delta^d,
  \end{equation}
  for every $k\geq0$, where
  $\delta_d(k)\in\mathbb Q[v_1,\ldots,v_d]$ and
  \begin{equation}\label{eq:delta-bound}
  \delta_d\in
  \sum_{0\leq j<d/2}\mathbb Q[v_1,\ldots,v_d]b_j(k)
  +\epsilon_d(k),
  \end{equation}
  where $b_j(k)=\frac{1}{2(k-j)+1}$, and $\epsilon_d(k)$ is supported on
  $[0,\lfloor d/2\rfloor)$.
\end{lemma}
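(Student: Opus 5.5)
The plan is to reduce \eqref{eq:delta-expansion} to an endpoint Laplace expansion for a one-variable phase, and then read off the $k$-dependence mode by mode.

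\textbf{Step 1: explicit form of the coefficients.} From \eqref{eq:Rcoeff},
\[
[u^k]\widetilde H(u)=(-1)^k\sum_{r}\frac{(k+r+1)!}{r!}\,\Delta^{k-r}\,[x^{k+r+1}]\bigl(A_\omega\Phi^r\bigr).
\]
Since $\Phi=x^2/2+O(x^3)$ and $A_\omega=x+O(x^2)$, only the indices $r\le k$ contribute. The term $r=k$ gives exactly $(2k+1)!!$, which is the normalization in \eqref{eq:delta-expansion}. The term $r=k-d$ contributes at order $\Delta^d$, with coefficient
\[
\frac{(2k-d+1)!}{(k-d)!\,2^{k-d}}\,[x^d]\Bigl(\tfrac{A_\omega}{x}\bigl(\tfrac{2\Phi}{x^2}\bigr)^{k-d}\Bigr).
\]
Here $[x^d]$ of the bracket is a polynomial in $k$ of degree at most $d$, with coefficients in $\mathbb Q[v_1,\dots,v_d]$ coming from $A_\omega$. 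The factorial prefactor divided by $(2k+1)!!$ is a rational function of $k$ whose only poles are at $k=j-\tfrac12$. Hence $\Delta^{-d}$ times the raw ratio, call it $\delta^{\mathrm{raw}}_d(k)$, is a polynomial in $k$ plus partial fractions in the $b_j(k)$. The terms with $k<d$, where $r=k-d<0$ is absent, produce a correction supported on finitely many $k$; this will become $\epsilon_d$.

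\textbf{Step 2: bound the pole order and remove the polynomial part.} Partial fractions alone do not give \eqref{eq:delta-bound}: we must show that only $b_j$ with $j<d/2$ occur, and that the polynomial part is removed by the gauge $(\gamma,\mu^{2/3})$. For this I will use the generating function rather than the raw sum. Formally,
\[
\widetilde H(u)\sim\frac{1}{u}\int_0^\infty e^{-f_\Delta(y)/u}\,a(y)\,dy,
\qquad f_\Delta(y)=y-\frac{\Phi(\Delta y)}{\Delta^2}=y-\frac{y^2}{2}+\frac{\Delta y^3}{3}-\cdots,
\]
where $a(y)\,dy$ comes from $\Delta^{-1}A_\omega(\Delta y)$ after the substitution $x=\Delta y$ in the Laplace representation from the proof of Proposition--Definition~\ref{defn:conifoldlaplace}. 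This is an endpoint (not saddle) expansion at $y=0$.

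Change variables by $s=1-\sqrt{1-2f_\Delta(y)}$, so that $f_\Delta=s-s^2/2$ exactly. The amplitude then becomes $\tilde a(s)\,ds=\sum_m c_m(\Delta)\,s^m\,ds$, with $c_m=O(\Delta^{\,m-1})$. The key point is that the term $s^m$ raises the $\Delta$-order by roughly one per power of $s$, while each pair of powers adds at most one partial-fraction pole.

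\textbf{Step 3: the model integrals.} Compute the model transforms $\frac1u\int e^{-(s-s^2/2)/u}s^m\,ds$ explicitly. By integration by parts, $s^{m}$ reduces to $s^{m-2}$ and $s^{m-1}$ with a factor of $u$. One checks that the $k$-th coefficient equals $(2k+1)!!$ times a combination of $b_j(k)$ with $j\le\lfloor (m-1)/2\rfloor$, plus a finite-support piece. Equivalently, these transforms are combinations of $z^sP(z)$ and monomials, which is exactly the shape of \eqref{eq:cost}.

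\textbf{Step 4: the gauge.} The two leading amplitude coefficients $c_0$ and $c_1$ are not small. Their effect is an overall factor and a rescaling $u\mapsto\lambda u$. A rescaling multiplies the $k$-th coefficient by $\lambda^k$, and this is precisely the source of any polynomial-in-$k$ part after taking logarithms order by order. Define $\gamma$ and the $\mu^{2/3}$ factor to cancel these two coefficients: one normalizes the amplitude constant, the other normalizes the linear term relative to the phase. The exponent $2/3$ will come from matching the cubic term $\Delta y^3/3$ against the leading Taylor coefficient $v_1$ of $\omega$. After this gauge, every remaining contribution is of the Step 3 type with the stated pole bound, which gives \eqref{eq:delta-bound}.

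\textbf{Main obstacle.} I expect the main obstacle to be the bookkeeping showing that, after this gauge, no residual polynomial-in-$k$ term survives at any order $d$, and that the pole order is at most $j<d/2$ rather than $j<d$. I would first verify $d=1,2$ by direct computation, and then argue in general by induction on $m$ in the reduction $s^m\to s^{m-2},s^{m-1}$, tracking the $\Delta$-order of $c_m$.
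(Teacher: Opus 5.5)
There is a genuine gap, and it sits at the step you flag as the main obstacle. Your substitution $s=1-\sqrt{1-2f_\Delta(y)}$ does not give $c_m=O(\Delta^{m-1})$.

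\textbf{Why the substitution fails.} The phase $f_\Delta$ has its critical point at $y=1/(1-\Delta)$, i.e.\ $x=\Delta/(1-\Delta)$. Its critical value is $\Phi(-\Delta)/\Delta^2=\tfrac12+\tfrac{\Delta}{3}+O(\Delta^2)$, not the model value $\tfrac12$. Consequently $s=y+\frac{\Delta y^3}{3(1-y)}+O(\Delta^2)$. With $a(y)=\Delta^{-1}A_\omega(\Delta y)$, the $\Delta^1$-part of the amplitude has $s^m$-coefficient $-(m+1)/3$ for every $m\ge 3$.

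This infinite tail is essential, not a technicality. In the normalization where $s^1$ gives $G$, the model transform of each $s^m$ has $k$-th coefficient $(-1)^k(2k+1)!!$ times a constant plus simple poles, up to finitely many $k$. So no finite combination of model transforms can grow in $k$. Yet your own Step~1 gives the raw $\Delta^1$-coefficient $v_1-1-\tfrac{2k}{3}-(v_1-1)b_0(k)$. Thus Steps~2 and~4 are mutually inconsistent: if $c_m=O(\Delta^{m-1})$ held, there would be no $k$-growth for a rescaling to cancel. Attributing the rescaling to $c_0,c_1$ is also wrong; in fact $c_0=0$ because $A_\omega(0)=0$.

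\textbf{The gauge is not a free choice.} $\mu=\omega/\Delta$ is fixed, so $\gamma^{k+3/2}\mu^{2k/3}=\gamma^{3/2}(\gamma\mu^{2/3})^k$ contains a single unknown series. The polynomial part at order $\Delta^d$ has up to $d+1$ coefficients in $k$. Its vanishing is therefore an identity to be proved, and the heuristic about the cubic term and $v_1$ does not prove it.

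\textbf{What Step~1 already gives.} Your Step~1 yields the pole and support bounds directly. The odd factors of $(2k+1)(2k)\cdots(2k-d+2)$ are $2(k-j)+1$ with $j\le(d-1)/2$, and the even ones cancel. The rational formula is correct for $k\ge\lfloor d/2\rfloor$. Multiplying by polynomial-in-$k$ gauge factors keeps simple poles at the same places. Hence the vanishing of the polynomial part is the entire content of the lemma, and that is exactly what your plan leaves open.

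\textbf{The missing idea.} You need to normalize the phase at its true critical point before changing variables. The paper defines $s$ by $\Delta s-\Phi(s)=\Phi(-\Delta)(1-y^2)$, so $y=0$ is the critical point $x=\Delta/(1-\Delta)$ and $y=1$ is the endpoint. Equivalently, $\sqrt{2\Phi(-x)}=y\sqrt{2\Phi(-\Delta)}$ with $x=\Delta-(1-\Delta)s$. So $x$ depends on $y$ only through $y\sqrt{2\Phi(-\Delta)}=y\Delta(1+O(\Delta))$. This is what makes $[\Delta^d]$ of the amplitude $C$ a polynomial in $y$ of degree at most $d+1$.

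In the variable $\xi=1-y^2$ the phase is linear, so coefficient extraction is exact. The normalized coefficient equals $\mu\bigl(\Delta^2/2\Phi(-\Delta)\bigr)^{k+3/2}$ times $[\xi^{k+1}](1-\xi)^{-1/2}C(\Delta,\sqrt{1-\xi})/[\xi^{k+1}](1-\xi)^{-1/2}$. This forces $\gamma=\frac{2\Phi(-\Delta)}{\Delta^2}\mu^{-2/3}$, i.e.\ $\gamma^{k+3/2}\mu^{2k/3}=\bigl(2\Phi(-\Delta)/\Delta^2\bigr)^{k+3/2}\mu^{-1}$. The factor $\mu$ is the amplitude at the critical point, $A_\omega(\Delta/(1-\Delta))=\Delta\mu(1-\Delta)$. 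That gives $C(\Delta,0)=1$ and removes the constant term. Then $y^{2r}$ contributes $(-1)^r(2r-1)!!\prod_{\ell<r}b_\ell(k)$, and $y^{2r+1}$ contributes a sequence supported on $k<r$, which gives the bound.

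\textbf{How to repair your version.} First rescale $u$ by exactly $2\Phi(-\Delta)/\Delta^2$, then apply your substitution to the rescaled phase. Your $s$ then becomes the paper's $1-y$, and $c_m=O(\Delta^{m-1})$ becomes true. Step~3 then goes through, provided you use the sharp model bound: the normalized transform of $s^m$ has poles only at $b_j$ with $j\le\lfloor m/2\rfloor-1$. Your stated bound $j\le\lfloor(m-1)/2\rfloor$ would allow $j=d/2$ when $d$ is even.
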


\begin{proof}
Choose the formal coordinate $s=s(\Delta,y)$ by
\[
 \Delta s-\Phi(s)=\Phi(-\Delta)(1-y^2),
 \qquad s(\Delta,y)=\Delta(1-y)+O(\Delta^2).
\]
Indeed, if $x=\Delta-(1-\Delta)s$, then
\[
 \Phi(-\Delta)-(\Delta s-\Phi(s))=\Phi(-x).
\]
Thus the required branch is determined by
$\sqrt{2\Phi(-x)}=y\sqrt{2\Phi(-\Delta)}$; it exists uniquely because
$\sqrt{2\Phi(-x)}=x+O(x^2)$.  Expanding its compositional inverse gives
\[
 s(\Delta,0)=\frac{\Delta}{1-\Delta},\qquad s(\Delta,1)=0,
 \qquad
 \partial_y s(\Delta,0)=-\frac{\sqrt{2\Phi(-\Delta)}}{1-\Delta},
\]
where the square root is $\Delta+O(\Delta^2)$, and
\begin{equation}\label{eq:quadratic-degree}
 [\Delta^d]\frac{s(\Delta,y)}\Delta\in\mathbb Q[y]_{\leq d+1},\qquad
 [\Delta^d]\frac{\partial_y s(\Delta,y)}\Delta\in\mathbb Q[y]_{\leq d}.
\end{equation}

Set
\begin{equation}\label{eq:quadratic-amplitude}
 C(\Delta,y):=
 -\frac{A_\omega(s(\Delta,y))\,\partial_y s(\Delta,y)}
 {\Delta\mu\sqrt{2\Phi(-\Delta)}}.
\end{equation}
The preceding identities and
$A_\omega(\Delta/(1-\Delta))=\Delta\mu(1-\Delta)$ give
\begin{equation}\label{eq:amplitude-properties}
 C(\Delta,0)=1,\qquad C(0,y)=1-y,\qquad
 [\Delta^d]C(\Delta,y)\in\mathbb Q[v_1,\ldots,v_d][y]_{\leq d+1}.
\end{equation}
The last assertion follows immediately by expanding
$A_\omega(s)/s$ and using \eqref{eq:quadratic-degree}.  In particular,
for $d>0$, $[\Delta^d]C(\Delta,y)$ is a linear combination of
$y,y^2,\ldots,y^{d+1}$.

With $\xi=1-y^2$, a formal change of variable in the residue form of
\eqref{eq:Rcoeff} gives
\begin{equation}\label{eq:quadratic-coefficients}
 \frac{[u^k]\widetilde H(u)}{(-1)^k (2k+1)!!}
 =\mu\left(\frac{\Delta^2}{2\Phi(-\Delta)}\right)^{k+3/2}
 \frac{[\xi^{k+1}](1-\xi)^{-1/2}
       C\!\left(\Delta,\sqrt{1-\xi}\right)}
      {[\xi^{k+1}](1-\xi)^{-1/2}}.
\end{equation}
Set
\begin{equation}\label{eq:gamma}
 \gamma=\frac{2\Phi(-\Delta)}{\Delta^2}\,\mu^{-2/3}
 \in1+\Delta\mathbb Q[v_1,v_2,\ldots][[\Delta]].
\end{equation}
Then the left-hand side of \eqref{eq:delta-expansion} is the coefficient
quotient in \eqref{eq:quadratic-coefficients}.  Its constant term in
$\Delta$ is $1$: by \eqref{eq:amplitude-properties},
the numerator at $\Delta=0$ is
$[\xi^{k+1}]((1-\xi)^{-1/2}-1)$, and $k+1>0$.

For an even monomial $y^{2r}$, $r\geq1$, its contribution to the
coefficient quotient is
\[
 \frac{[\xi^{k+1}](1-\xi)^{r-1/2}}
      {[\xi^{k+1}](1-\xi)^{-1/2}}
 =(-1)^r(2r-1)!!
   \prod_{\ell=0}^{r-1}\frac1{2(k-\ell)+1}.
\]
Partial fractions put this in the span of the sequences
$(2(k-\ell)+1)^{-1}$, $0\leq\ell<r$.  Since $2r\leq d+1$,
each $\ell<d/2$.  For an odd monomial $y^{2r+1}$, the numerator is
$[\xi^{k+1}](1-\xi)^r$, which vanishes unless $k<r$.
As $2r+1\leq d+1$, this sequence is supported on
$0\leq k<\lfloor d/2\rfloor$.  These are precisely the two
bounds in \eqref{eq:delta-bound}.
\end{proof}

\medskip
\noindent
For $H$ defined by \eqref{eq:affine-characteristic},
\eqref{eq:delta-expansion} gives
\[
 A_d(u)=-u\sum_{k\geq0}((-1)^k (2k+1)!!)\delta_d(k)u^k.
\]
The finitely supported sequences in \eqref{eq:delta-bound} give the
monomials $u^a$ with $2a\leq d$.  For the pole part, partial fractions give
\[
 \operatorname{Span}_{\mathbb Q}
 \{b_j(k):0\leq j<d/2\}
 =
 \operatorname{Span}_{\mathbb Q}
 \left\{
 \prod_{\ell=0}^{s-1}\frac{1}{2(k-\ell)+1}
 :
 1\leq s\leq\left\lceil\frac d2\right\rceil
 \right\}.
\]
For each such \(s\), a direct index shift gives
\[
\begin{split}
&-u\sum_{k\geq0}((-1)^k (2k+1)!!)
 \left(\prod_{j=0}^{s-1}\frac1{2(k-j)+1}\right)u^k\\
&\qquad=(-1)^su^sP(u)
 -\sum_{\nu=0}^{s-2}([u^\nu]G(u))
 \left(\prod_{j=0}^{s-1}\frac1{2(\nu-j)+1}\right)u^{\nu+1}.
\end{split}
\]
The first term satisfies $2s-1\leq d$, and every term in the second
sum satisfies $2(\nu+1)\leq d$.  This proves \eqref{eq:cost}.

\end{document}